\documentclass[11pt]{article}
\usepackage[utf8]{inputenc}
\usepackage{xcolor}
\usepackage[T1]{fontenc}

\usepackage{mathrsfs}
\usepackage{amsthm,amsmath,amsfonts,amssymb}
\usepackage{graphicx}
\usepackage{enumitem}
\setlist[itemize]{itemsep=1pt, labelindent=2em}
\setlist[enumerate]{itemsep=1pt, labelindent=2em}

\usepackage{authblk}
\usepackage{cite}
\usepackage{comment}

\let\OLDthebibliography\thebibliography
\renewcommand\thebibliography[1]{
  \OLDthebibliography{#1}
  \setlength{\parskip}{0pt}
  \setlength{\itemsep}{0pt plus 0.3ex}
}

\allowdisplaybreaks
\makeatletter
\numberwithin{equation}{section}
\numberwithin{figure}{section}
\theoremstyle{plain}
\newtheorem{thm}{\protect\theoremname}
  \theoremstyle{plain}
  \newtheorem{lemma}[thm]{\protect\lemmaname}

\numberwithin{thm}{section}

\newtheorem{corollary}[thm]{Corollary}
\newtheorem{proposition}[thm]{Proposition}

\newtheorem{definition}[thm]{Definition}

\theoremstyle{remark}
\newtheorem*{rem}{Remark}

\makeatother

\usepackage{babel}
\providecommand{\propname}{Proposition}

\providecommand{\lemmaname}{Lemma}
\providecommand{\theoremname}{Theorem}

\renewcommand{\Im}{\imag}
\renewcommand{\Re}{\real}

\newcommand{\ee}{\epsilon}

\newcommand{\vp}{\varphi}

\newcommand{\im}{\text{{\rm Im}\,}}
\newcommand{\sgn}{\text{{\rm sgn}\,}}

\newcommand{\I}{{\rm i}}

\newcommand{\OO}{{\bf \Lambda}}

\newcommand{\oo}{\l}

\let \le \leqslant
\let \leq \leqslant
\let \ge \geqslant
\let \geq \geqslant
\let \epsilon \varepsilon
\let \vp \varphi

\usepackage{hyperref}
\hypersetup{
    colorlinks=true, % make the links colored
    linkcolor=black, % color TOC links in blue
    urlcolor=black, % color URLs in black
    linktoc=all % 'all' will create links for everything in the TOC
}

\usepackage[font=normal]{caption}
\usepackage{floatrow}

\usepackage{mathtools}

\usepackage[dvipsnames]{xcolor}

\global\long\def\ii{\mathfrak{i}}
\global\long\def\ee{\mathrm{e}}

\renewcommand{\limsup}{\varlimsup}

\newcommand{\norm}[1]{\lVert #1 \rVert}

\newcommand{\m}[1]{\mathbb{#1}}

\renewcommand\Re{\operatorname{Re}}
\renewcommand\Im{\operatorname{Im}}

\def\mob{\mathrm{M\ddot{o}b}}
\def\Diff{\operatorname{Diff}}
\def\QS{\operatorname{QS}}

\def\Sym{\operatorname{Sym}}
\def\WP{\operatorname{WP}}

\def\b{\beta}
\def\g{\gamma}

\def\t{\theta}

\def\I{\Iota}
\def\l{\lambda}
\def\k{\kappa}

\def\o{\omega}
\def\O{\Omega}

\def\Chat{\hat{\m{C}}}
\def\eps{\varepsilon}

\def\dd{\mathrm{d}}

\def\1{\mathbf{1}}

\title{On the Loewner energy of a welding homeomorphism}
\author{Shuo Fan\footnote{Tsinghua University, email: shuofan.math@gmail.com}, \quad Fredrik Viklund\footnote{KTH Royal Institute of Technology, email: frejo@kth.se},\quad Yilin Wang\footnote{ETH Z\"urich, email: yilin.wang@math.ethz.ch}}
\date{}
\begin{document}
\maketitle
\vspace{-2.5em}
\begin{center}
   \emph{ Dedicated to Nick Makarov on the occasion of his 70th birthday }
\end{center}
\vspace{1em}
\begin{abstract}
To any Jordan curve one may associate a circle homeomorphism $\vp : \m S^1 \to \m S^1$ via conformal welding. Through this correspondence, the Loewner energy $I^L$, also known as the universal Liouville action, is a K\"ahler potential for the unique homogeneous K\"ahler metric on the universal Teichm\"uller space.  Despite this, explicit expressions for $I^L$ in terms of $\vp$ alone do not seem to be available in the literature.

In this paper, we obtain such formulas. For this, we introduce an operator $\OO_\vp$ defined using the Fourier coefficients of the function
\[
(z,w) \mapsto \log \left|\frac{\vp(z)-\vp(w)}{z-w}\right|, \qquad (z,w) \in \mathbb{S}^1 \times \mathbb{S}^1. 
\] 
We relate $\OO_\vp$ to the single-layer potential and composition operator, and prove an analog of the classical Grunsky inequalities for quasisymmetric $\vp$. We show moreover that $\vp$ is Weil--Petersson if and only if $\OO_\vp$ is Hilbert--Schmidt, and we express $I^L$ as several related Fredholm determinants as well as a regularized  Fredholm determinant.  We also treat Schatten classes and we obtain formulas in terms of Dirichlet integrals involving $\log \vp'$ and in terms of the composition operator induced by $\vp$. 
\end{abstract}

\tableofcontents

\section{Introduction}

\subsection{Background}
The Loewner energy $I^L$ is a nonnegative quantity associated with each Jordan curve $\gamma$ on the Riemann sphere $\Chat = \m C \cup \{\infty\}$. It was originally introduced in \cite{RW}, in the context of large deviations of Schramm--Loewner evolution curves. It is invariant under M\"obius transformations of $\Chat$, and is zero if and only if the curve is a circle. In a sense, $I^L(\gamma)$ measures the ``roundness'' of the curve in a conformally invariant manner.

The Loewner energy is infinite for rough curves. The class of Jordan curves of finite energy --- the Weil--Petersson quasicircles --- has attracted much attention in recent years. It has a striking number of equivalent characterizations and connects a diverse range of topics, including random conformal geometry \cite{carfagnini2023onsager, VW1, VW2}, K\"ahler geometry on universal Teichm\"uller space \cite{W2,TT06}, Berezin quantization of the Bott--Virasoro group \cite{AST_berezin}, minimal surfaces in hyperbolic $3$-space \cite{Bishop_WP}, renormalized volume of hyperbolic $3$-manifolds \cite{bbvw}, and planar Coulomb gases \cite{johansson_viklund,johansson2021strong}. Let us briefly recall one definition of the Loewner energy and elaborate on the viewpoint from Teichm\"uller theory.  

After applying a M\"obius transformation, we can assume that $\gamma$ separates $0$ and $\infty$, and we write $D$ and $D^*$ for the bounded and unbounded components of $\mathbb{C} \smallsetminus \gamma$, respectively, and $\m D = \{z \in \m C \colon |z| < 1\}$ and $\m D^*  = \{z \in \m C \colon |z| >  1\}$.  
Let $f: \mathbb{D} \to D$ and $g: \mathbb{D}^* \to D^*$ be  conformal maps (biholomorphic functions) such that $f(0)=0$ and $g(\infty) = \infty$. Let $\mathcal{D}_\O(u) = \frac{1}{\pi}\int_\O |\nabla u|^2 d^2z$ be the 2-dimensional Dirichlet integral over a domain $\O$. The \emph{Loewner energy} of the Jordan curve $\gamma$ can be expressed as
\begin{align}\label{def:loewner energy dirichlet}
I^L(\gamma)=\mathcal{D}_\mathbb{D}(\log |f'|) + \mathcal{D}_\mathbb{D^*}(\log |g'|) + 4 \log |f'(0)|/|g'(\infty)|,
\end{align}
where $g'(\infty) = \lim_{z \to \infty}g'(z)$, see \cite{W2}.

The right-hand side of \eqref{def:loewner energy dirichlet} is also known as the \emph{universal Liouville action}, introduced in \cite{TT06} in the study of the Kähler geometry of universal Teichmüller space. To describe this, note that Carath\'eodory's theorem implies that the conformal maps $f$ and $g$ extend to homeomorphisms of the closures of $\mathbb{D}$ and $\mathbb{D}^*$, respectively. The \emph{welding homeomorphism} associated with $\gamma$ is the map $\varphi : \mathbb{S}^1 \to \mathbb{S}^1$ defined by \begin{align}\label{relation-welding}\vp = g^{-1} \circ f \mid_{\mathbb{S}^1},\end{align} where we write $\mathbb{S}^1$ for the unit circle. Here we started with $\gamma$ and obtained $\vp$. The \emph{conformal welding problem} is the inverse operation which starts with $\vp$ and tries to find conformal maps $f,g$ and a curve $\gamma$ such that \eqref{relation-welding} holds. Under regularity assumptions (such as requiring $\vp$ to be quasisymmetric), this can be done and gives a correspondence between curves and homeomorphisms, see Section~\ref{sect:prel}.

It is well known that $\gamma$ is a quasicircle if and only if $\varphi$ is quasisymmetric, and we denote by $\mathrm{QS} = \mathrm{QS}(\mathbb{S}^1)$ the group of quasisymmetric homeomorphisms of the circle. 
In \cite{shen13} it was shown that $\gamma$ has finite Loewner energy if and only if $\varphi \in \mathrm{WP}(\mathbb{S}^1) \subset \mathrm{QS}(\mathbb{S}^1)$, where $\mathrm{WP}(\mathbb{S}^1)$ denotes the class of \emph{Weil--Petersson (quasisymmetric) homeomorphisms}.
This class is defined by requiring that $\vp$ be absolutely continuous and that $\log |\varphi'|$ belongs to the Sobolev space $H^{1/2}(\mathbb{S}^1)$. See Sections~\ref{sect:classes_homeo} and~\ref{sect: Fourier and H12}.

 In this way, via conformal welding, these classes of Jordan curves can be identified with the corresponding classes of circle homeomorphisms. The latter carry natural group structures given by composition, and after factoring out M\"obius invariance, we obtain the homogeneous spaces:
$$T(1) = \mob (\m S^1) \backslash \QS (\m S^1), \qquad T_0 (1) = \mob (\m S^1) \backslash \WP (\m S^1).$$
The former is the universal Teichmüller space, whereas the latter is the Weil–Petersson Teichmüller space. It was proved in \cite{TT06} that $T_0(1)$ can be endowed with a unique homogeneous Kähler metric, called the Weil–Petersson metric, and that $I^L$, viewed as a functional on circle homeomorphisms, defines a Kähler potential for this metric on $T_0(1)$. However, the underlying reason for the appearance of $I^L$ (viewed as arising from random geometry) in this context remains unclear. With a slight abuse of notation, we will write $I^L (\vp)$ for the Loewner energy of $\vp$ (identified with its welded curve).

Nevertheless, to our knowledge, all expressions for the Loewner energy of $\varphi \in \WP(\m S^1)$ that appear in the literature\footnote{However, Proposition~\ref{prop:IL_X_Y-intro} is obtained by an immediate combination of results of Takhtajan and Teo, and the formula must have been known to them.} require first solving the welding problem in order to then express the energy in some way via the curve itself or the pair of conformal maps $(f,g)$. The goal of this paper is to obtain equivalent and intrinsic expressions for the Loewner energy using only the circle homeomorphism $\varphi$. Although this question has been asked in the area for some time, it was explicitly formulated in \cite[Problem~4.1]{Wang2025_TwoOptimizationLoewnerEnergy} and \cite{MesikeppYang2025_LoewnerEnergy}; these references also provide further motivation.

Finally, note that conformal welding can be viewed naturally as a correspondence between Jordan curves on the conformal boundary of hyperbolic $3$-space and positive curves on the conformal boundary of Anti-de Sitter $3$-space. From this perspective, the Loewner energy exhibits interesting symmetries when viewed as a functional on both sides of the correspondence \cite{Wang2025_TwoOptimizationLoewnerEnergy}. These relations are not well-understood and provide another motivation for the present paper.

\subsection{Main results}
Our approach is based on representing $\vp$ by a certain operator that we introduce. This operator is of independent interest, and in addition to its links to the Loewner energy, we will also study some of its basic properties and discuss its relation to more classical notions in geometric function theory.

  Throughout, we assume that circle homeomorphisms are orientation-preserving.

\subsection*{The Grunsky operator of a circle homeomorphism}
By analogy with the classical Grunsky coefficients associated with a univalent function (see Section~\ref{sect:classical_grunsky}), we will refer to the Fourier coefficients of (the real part of) the logarithmic difference ratio of $\vp$ as the \emph{Grunsky coefficients} of $\varphi$ or 
simply the \emph{welding Grunsky coefficients}.
 \begin{definition}[Grunsky coefficients of $\varphi$]
Let $\vp$ be a circle homeomorphism and suppose that $\log (\left|\vp(z) - \vp(w) \right|/ \left|z-w\right|) \in L^1(\mathbb{S}^1 \times \mathbb{S}^1)$. Define the \emph{Grunsky coefficients} of $\varphi$ (or welding Grunsky coefficients) by 
\begin{equation}\label{eq:welding-grunsky-coeff}
\widehat{\oo}_{k, \ell} = \frac{1}{(2\pi)^2} \iint_{[0,2\pi]^2}   \log \left|\frac{\vp(\ee^{\ii s}) - \vp(\ee^{\ii t})}{\ee^{\ii s} -\ee^{\ii t}} \right| \cdot  \ee^{-\ii (ks+\ell t)} \ \dd s \ \dd t, \qquad k, \ell \in \mathbb{Z}.
\end{equation}

\end{definition}
 \begin{definition}[Grunsky matrix of $\varphi$]
 For $k,\ell \in \mathbb{Z}^* \coloneqq \mathbb{Z} \smallsetminus \{0\}$, set
 \[
 \oo_{k, \ell} \coloneqq \sqrt{|k \ell|} \,  \widehat{\oo}_{k, \ell},
 \]
 where $\widehat{\oo}_{k, \ell}$ are the Grunsky coefficients of $\vp$ as in \eqref{eq:welding-grunsky-coeff}. Define the \emph{Grunsky matrix} of $\vp$ (or welding Grunsky matrix) by
 \begin{equation}\label{eq:intro_OO_block}
 \OO_\vp \coloneqq 
\begin{pmatrix}
M & N\\
\overline{N} & \overline{M}
\end{pmatrix},
 \end{equation}
where $M$ and $N$ are given by
\[
 (M_{k, \ell})_{k, \ell \ge 1} \coloneqq \left( \oo_{k, -\ell}\right)_{k, \ell \ge 1}, \qquad  (N_{k, \ell})_{k, \ell \ge 1} \coloneqq \left( \oo_{k,\ell} \right)_{k, \ell \ge 1},
\]
so that $M^* = M$ and $N^t = N$. 
\end{definition}

Unlike $\log|\vp'|$, the Grunsky coefficients of $\vp$ are well-defined under rather weak assumptions, for instance, if $\vp$ and $\vp^{-1}$ are H\"older continuous, since this implies 
$$ \log \left|\frac{\vp(z) - \vp(w)}{z-w} \right| \in L^p(\mathbb{S}^1 \times \mathbb{S}^1)\quad \text{ for all } p < \infty.$$
In particular, by Mori's theorem, the Grunsky coefficients of $\vp \in \QS(\mathbb{S}^1)$ are well-defined. %, see Lemma~\ref{lem:Holder continuity implies L2}. 
 Moreover, the ``square summation'' of the double Fourier series associated with $(\widehat \oo_{k, \ell})$ converges to $\log (\left|\vp(z) - \vp(w) \right|/\left|z-w\right|)$ in the $L^2$-sense and a.e.\ on $\mathbb{S}^1 \times \mathbb{S}^1$, see Lemma~\ref{lemma:sjolin} and Lemma~\ref{lem:Holder continuity implies L2}.

\subsection*{Basic properties of $\OO_\vp$}
We have the following basic results for $\OO_\vp$. We write $\mob(\mathbb{S}^1)$ for the M\"obius transformations that preserve $\mathbb{S}^1$.
\begin{thm}
If $\vp \in \QS(\mathbb{S}^1)$, then  $\OO_\vp$ defines a bounded self-adjoint operator on $\ell^2_{\mathbb{Z}^*}$ by ``matrix multiplication''. 
Moreover, $\OO_{\vp_1} = \OO_{\vp_2}$ if and only if there exists $\mu \in \mob(\m S^1)$ such that $\varphi_1 = \mu \circ \varphi_2$ and so
\[
\vp \mapsto \OO_\vp
\]
descends to an injective map on  $T(1)$. 
\end{thm}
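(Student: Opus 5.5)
The plan is to interpret $\OO_\vp$ as a quadratic form on $H^{1/2}$, where boundedness becomes an energy estimate for harmonic extensions, and injectivity becomes recovering $\vp$ from $\log|\vp(z)-\vp(w)|$ up to Möbius maps.

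\textbf{Quadratic form.} For a finitely supported real sequence, pair it with a real trigonometric polynomial $u=\sum_{k\neq 0} a_k z^k$, where $a_{-k}=\overline{a_k}$. Set $x_k=\sqrt{|k|}\,a_k$, so that $\|x\|_{\ell^2}^2=\sum|k||a_k|^2$ is the $H^{1/2}$ seminorm, which equals the Dirichlet energy $\mathcal D_{\m D}(Pu)$ of the harmonic extension. Formally, the definition gives
\[
\langle \OO_\vp x, x\rangle=\iint \log\left|\frac{\vp(z)-\vp(w)}{z-w}\right|\, u'(z)\,\overline{u'(w)}\,\frac{dz\,d\bar w}{\ldots},
\]
which, after integrating by parts in $s$ and $t$, is the difference of two logarithmic energies:
\[
-\iint \log|\vp(z)-\vp(w)|\,du(z)\,du(w)\;+\;\iint \log|z-w|\,du(z)\,du(w).
\]
The identity $\log|z-w|=-\sum_{k\ge1}\frac1k\Re(z^k\bar w^k)$ shows that the second term is a constant multiple of $\|u\|_{H^{1/2}}^2$.

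\textbf{Boundedness.} Write $\mu=u'\,ds$, viewed as a mean-zero signed measure. The first term is the same logarithmic energy of the pushforward, with $du(z)$ replaced by $d(u\circ\vp^{-1})(\vp(z))$. So it equals, up to the same constant, $\|u\circ\vp^{-1}\|_{H^{1/2}}^2$. Composition with a quasisymmetric homeomorphism is bounded on $H^{1/2}$ modulo constants; this is the Nag--Sullivan / Beurling--Ahlfors result, proved via quasiconformal extension and quasi-invariance of the Dirichlet integral. Hence $|\langle \OO_\vp x,x\rangle|\le C(\vp)\|x\|^2$.

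Self-adjointness is direct from the definition. The kernel is real and symmetric in $(z,w)$, which gives $\widehat\oo_{k,\ell}=\widehat\oo_{\ell,k}$ and $\overline{\widehat\oo_{k,\ell}}=\widehat\oo_{-k,-\ell}$. These yield $M^*=M$ and $N^t=N$, so the block matrix is Hermitian. A bounded Hermitian quadratic form then defines a bounded self-adjoint operator by polarization.

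\textbf{Justification step.} Integrating by parts against $\log|\vp(z)-\vp(w)|$, which is only $L^p$ with a log singularity, needs care. I would first approximate by smooth diffeomorphisms, or use the $L^2$ convergence of the square partial sums from Lemma~\ref{lem:Holder continuity implies L2}, and then pass to the limit. I expect this, together with pinning down the exact constants, to be the main technical obstacle.

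\textbf{Möbius invariance.} If $\mu\in\mob(\m S^1)$, then $|\mu(a)-\mu(b)|^2=|\mu'(a)||\mu'(b)||a-b|^2$. Hence $\log|\mu\circ\vp_2(z)-\mu\circ\vp_2(w)|$ differs from $\log|\vp_2(z)-\vp_2(w)|$ by $h(z)+h(w)$, where $h=\frac12\log|\mu'\circ\vp_2|$. A function of the form $h(z)+h(w)$ has Fourier coefficients supported on $k=0$ or $\ell=0$, which are excluded from $\OO$. So the operator descends to $T(1)$.

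\textbf{Injectivity.} Conversely, $\OO_{\vp_1}=\OO_{\vp_2}$ means the two kernels differ by $a(z)+b(w)$. By symmetry we may take $a=b$, so
\[
|\vp_1(z)-\vp_1(w)|^2=\ee^{2a(z)+2a(w)}\,|\vp_2(z)-\vp_2(w)|^2 \quad\text{a.e.}
\]
Set $\psi=\vp_1\circ\vp_2^{-1}$, with variables $\xi=\vp_2(z)$ and $\eta=\vp_2(w)$. Then
\[
|\psi(\xi)-\psi(\eta)|^2=c(\xi)\,c(\eta)\,|\xi-\eta|^2 .
\]
By continuity of both sides, this holds everywhere off the diagonal. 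The cross ratio $|\psi(\xi_1)-\psi(\xi_2)||\psi(\xi_3)-\psi(\xi_4)|/(|\psi(\xi_1)-\psi(\xi_3)||\psi(\xi_2)-\psi(\xi_4)|)$ is therefore preserved for all quadruples on the circle. A homeomorphism of $\m S^1$ preserving all absolute cross ratios agrees with a Möbius map: normalize three points and use that the cross ratio determines the fourth point on the circle. Hence $\psi\in\mob(\m S^1)$, and $\vp_1=\psi\circ\vp_2$.
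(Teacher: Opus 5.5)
Your proposal is correct. Boundedness, self-adjointness and Möbius invariance follow the paper. Injectivity takes a different route, and one step there needs repair.

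\textbf{Boundedness and self-adjointness.} Your argument is essentially the paper's. The quadratic form is a difference of logarithmic energies, namely $\frac12(\|u\|_{\dot H^{1/2}}^2-\|{\bf C}_{\vp^{-1}}u\|_{\dot H^{1/2}}^2)$. It is bounded by quasi-invariance of the Dirichlet integral, with smooth approximation justifying the change of variables (Proposition~\ref{lem:key-grunsky-estimate}, Corollary~\ref{cor:Grunsky-ineqs-general-version}). Your Möbius-invariance step is Lemma~\ref{lem:mob}. Two bookkeeping points, both harmless:
\begin{itemize}
\item Pairing the kernel with $\dot u\,\overline{\dot u}$ evaluates the form at ${\bf h}=-{\bf J}x$ rather than at $x$; this is fine since ${\bf J}$ is unitary.
\item Testing only real $u$ suffices, because $\OO_\vp$ commutes with the conjugation $x_k\mapsto\overline{x_{-k}}$, so the form splits over real and imaginary parts.
\end{itemize}

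\textbf{Injectivity: comparison of routes.} The paper first proves $2\OO_\vp={\bf I}-{\bf C}_\vp{\bf C}_\vp^*$ (Theorem~\ref{thm:2OO}) and derives the cocycle rule of Corollary~\ref{cor:OO_two_vp}. This reduces $\OO_{\vp_1}=\OO_{\vp_2}$ to $\OO_{\vp_1\circ\vp_2^{-1}}=0$. It then uses the chain $\OO_\psi=0\Leftrightarrow Y_\psi=0\Leftrightarrow\psi$ preserves $H_\pm$ $\Leftrightarrow\psi\in\mob(\m S^1)$. You instead use uniqueness of Fourier coefficients: equal matrices mean the kernels differ by $a(z)+a(w)$. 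You then finish with cross-ratio rigidity.

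Your argument is more elementary and more general. It needs only that the kernels are integrable, not that ${\bf C}_\vp$ is bounded and invertible. It also shows explicitly that $\OO_\vp$ forgets exactly a ``conformal factor'' $a(z)+a(w)$. The paper's argument costs nothing once the operator identity is available, and that identity drives the rest of the paper (Schatten/compactness characterizations, determinant formulas).

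\textbf{Step that needs repair.} ``By continuity of both sides'' is not available as written, for two reasons. First, $c=e^{2a\circ\vp_2^{-1}}$ is only a measurable function. Second, an a.e.\ identity in $(z,w)$ need not give an a.e.\ identity in $(\xi,\eta)$, because a quasisymmetric $\vp_2$ can be singular. The fix is to do the upgrade in the $(z,w)$ variables:
\begin{itemize}
\item The function $G(z,w)=\log|\vp_1(z)-\vp_1(w)|-\log|\vp_2(z)-\vp_2(w)|$ is continuous off the diagonal.
\item By Fubini there are $w_0\neq w_1$ with $a=G(\cdot,w_j)-a(w_j)$ a.e.\ for $j=0,1$.
\item These two functions are continuous off $w_0$ and off $w_1$ respectively, so they agree off $\{w_0,w_1\}$ and glue to a continuous representative $\tilde a$.
\item Hence $G=\tilde a(z)+\tilde a(w)$ everywhere off the diagonal. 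Only then substitute $\xi=\vp_2(z)$, $\eta=\vp_2(w)$ pointwise.
\end{itemize}

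Finally, after normalizing three points, the absolute cross ratio determines the fourth point only up to two candidates on $\m S^1$, one on each arc between two of the fixed points. You select the correct one because a homeomorphism fixing three points maps each complementary arc to itself.
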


This theorem
combines several statements proved in Sections~\ref{sect:grunsky-defs},~\ref{sect: grunsky ineq} and \ref{sect:proofs}. 
See also Corollary~\ref{cor:intro_OO_two_vp} which gives the general composition rule for $\OO_\vp$.

Moreover, we prove the following bounds, which are reminiscent of the classical Grunsky inequalities for quasicircles \cite{Grunsky1939, Pom_uni}.  Recall that any quasisymmetric circle homeomorphism extends to a $K$-quasiconformal map of $\m D$, for some $K \ge 1$.
\begin{proposition}[See Corollary~\ref{cor:Grunsky-ineqs-general-version}]
    \label{prop:qs_kappa_bound}
If $\vp \in \QS(\mathbb{S}^1)$  has a $K$-quasiconformal extension to $\m D$ and 
 ${\bf h} = ((h_k)_{k \ge 1}, (h_{-k})_{k \ge 1}) \in \ell^2 = \ell^2_{\mathbb{Z}^*}$, then
\begin{equation}\label{estimate:grunsky1}  
-\frac{1}{2}\left(K-1\right) \|{\bf h}\|_{\ell^2}^2 \le \langle \OO_\vp{\bf h},{\bf h} \rangle_{\ell^2} \le \frac{1}{2}\left(1-\frac1K\right)\|{\bf h}\|_{\ell^2}^2.
\end{equation}
In particular, since $\OO_\vp$ is self-adjoint,
\begin{equation}\label{estimate:grunsky2}  
\|\OO_\vp {\bf h}\|_{\ell^2} \le \frac{1}{2}\left(K-1\right)\|{\bf h}\|_{\ell^2}
\end{equation}
and 
\begin{equation} \label{eq:norm_bound_OO}
{\bf I} - \OO_\vp \ge \frac12 \left(1 + \frac1K\right)
\end{equation}
is invertible. 
\end{proposition}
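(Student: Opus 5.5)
The plan is to identify the quadratic form $\langle \OO_\vp{\bf h},{\bf h}\rangle$ with a difference of two logarithmic energies. The two energies will differ only by the action of $\vp$, and the composition operator by $\vp$ on $H^{1/2}$ is $K$-bi-Lipschitz for the Dirichlet seminorm.

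First, I would fix a finitely supported ${\bf h}$ and pass to general ${\bf h}\in\ell^2$ by density at the end, using the boundedness of $\OO_\vp$. To ${\bf h}$ I associate the mean-zero trigonometric polynomial
\[
\rho(\ee^{\ii t})=\sum_{k\ne0}\sqrt{|k|}\,h_k\,\ee^{\ii kt},
\]
with the index convention chosen to match the block form \eqref{eq:intro_OO_block}. Unwinding the definitions, and using that the block structure makes the pairing sesquilinear in $\rho$, I expect
\[
\langle \OO_\vp{\bf h},{\bf h}\rangle=\frac{1}{(2\pi)^2}\iint \log\left|\frac{\vp(z)-\vp(w)}{z-w}\right| \rho(z)\overline{\rho(w)}\,\dd s\,\dd t .
\]
I would then split the logarithm into two pieces.

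The piece coming from $-\log|z-w|$ is computed from the expansion $-\log|\ee^{\ii s}-\ee^{\ii t}|=\sum_{n\neq0}\ee^{\ii n(s-t)}/(2|n|)$. Since $\rho$ has mean zero, this contributes exactly $\tfrac12\sum_k|h_k|^2=\tfrac12\|{\bf h}\|^2$.

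The piece coming from $\log|\vp(z)-\vp(w)|$ is handled by the same expansion after substituting $z\mapsto\vp^{-1}(z)$. It equals $-\tfrac12\|\vp_*\rho\|^2_{\dot H^{-1/2}}$, where $\vp_*\rho$ is the pushforward of the measure $\rho\,\dd t$ under $\vp$; this is again mean-zero, and the norm is the logarithmic energy. The cleanest rigorous formulation is by duality. The $\dot H^{-1/2}$ norm is dual to the Dirichlet seminorm $\mathcal D(u)^{1/2}$ on $H^{1/2}$, and $\langle \vp_*\rho,u\rangle=\langle\rho,u\circ\vp\rangle$. Hence $\vp_*$ is the adjoint of the composition operator $C_\vp u=u\circ\vp$, and altogether
\[
\langle \OO_\vp{\bf h},{\bf h}\rangle=\tfrac12\bigl(\|{\bf h}\|^2-\|C_\vp^*\rho\|^2_{\dot H^{-1/2}}\bigr),\qquad \|\rho\|_{\dot H^{-1/2}}=\|{\bf h}\|.
\]

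Next comes the classical estimate. Let $F$ be a $K$-quasiconformal extension of $\vp$ to $\m D$, and let $Pu$ denote the harmonic extension of $u$. The function $Pu\circ F$ has boundary values $u\circ\vp$, and the Dirichlet integral is $K$-quasi-invariant under $F$. Since the harmonic extension minimizes the Dirichlet integral among functions with given boundary values, we get $\mathcal D(u\circ\vp)\le K\,\mathcal D(u)$. Applying the same argument to $\vp^{-1}$, which has the $K$-quasiconformal extension $F^{-1}$, gives $\mathcal D(u\circ\vp)\ge K^{-1}\mathcal D(u)$. Complex-valued $u$ are treated by splitting into real and imaginary parts. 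Duality then transfers this to
\[
K^{-1}\|\rho\|^2\le\|C_\vp^*\rho\|^2\le K\|\rho\|^2 .
\]
Inserting this into the identity for the quadratic form gives exactly \eqref{estimate:grunsky1}.

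Since $\OO_\vp$ is self-adjoint, its operator norm equals the supremum of $|\langle\OO_\vp{\bf h},{\bf h}\rangle|$ over unit vectors. This supremum is at most $\max\bigl(\tfrac12(K-1),\tfrac12(1-1/K)\bigr)=\tfrac12(K-1)$, which gives \eqref{estimate:grunsky2}. The upper bound in \eqref{estimate:grunsky1} gives ${\bf I}-\OO_\vp\ge 1-\tfrac12(1-1/K)=\tfrac12(1+1/K)>0$, so ${\bf I}-\OO_\vp$ is invertible, which is \eqref{eq:norm_bound_OO}.

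The main obstacle is the rigorous justification of the second energy identity, i.e. the change of variables in the doubly singular integral with $\vp$ merely quasisymmetric. I would handle it as follows. Take $\rho$ to be a trigonometric polynomial, so the kernel is integrable by Mori's theorem and Fubini applies. Then compute $\iint\log|\vp(z)-\vp(w)|^{-1}\rho\bar\rho$ as $\lim_{r\to1}$ of the same integral with $\log|r\vp(z)-\vp(w)|^{-1}$, where the Fourier expansion converges absolutely and gives $\tfrac12\sum_n |\widehat{\vp_*\rho}(n)|^2r^{|n|}/|n|$. Monotone convergence then identifies the limit with $\tfrac12\|\vp_*\rho\|^2_{\dot H^{-1/2}}$.
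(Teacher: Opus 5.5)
Your argument is correct and is in substance the paper's proof. With $\rho=\dot u$, your pushforward $\vp_*\rho$ is $\partial_t({\bf C}_{\vp^{-1}}u)$, so $\|\vp_*\rho\|_{\dot H^{-1/2}}=\|{\bf C}_{\vp^{-1}}u\|_{\dot H^{1/2}}$; your identity for the quadratic form is therefore exactly \eqref{march3} (the matrix form of \eqref{eq:energy-identity}), and the bounds then come from the same Dirichlet-principle estimate as Lemma~\ref{lem:quasiinvariance}. The only real difference is the non-smooth case: the paper proves the identity for $\vp\in\Diff^\infty(\mathbb{S}^1)$ and passes to quasisymmetric $\vp$ via Lemma~\ref{lem: qs approximation}, whereas your regularization $\log|r\vp(z)-\vp(w)|^{-1}$ handles quasisymmetric $\vp$ directly, provided you also justify the limit $r\to1$ on the integral side by dominated convergence, using $|r\vp(z)-\vp(w)|\ge\sqrt{r}\,|\vp(z)-\vp(w)|$ and the H\"older bound of Lemma~\ref{lem:Holder continuity implies L2}.
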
% 

Here, $\langle \OO_\vp {\bf h},{\bf h} \rangle_{\ell^2} = \lim_{m \to \infty} \sum_{1 \le |k|, |\ell| \le m}  \oo_{k,-\ell}h_\ell \overline{h_k}$. 
The proof of Proposition~\ref{prop:qs_kappa_bound} is given in Section~\ref{sect: grunsky ineq}. The argument is based on the single-layer potential, similar to the method of \cite{CJV}, and the Dirichlet integral.

\subsection*{Characterizations and determinant formulas}
Our first main result characterizes classes of welding homeomorphisms in terms of $\OO_\vp$. 
\begin{thm}\label{thm:main-Q-with-Schatten}
Let $\vp \in \QS(\mathbb{S}^1)$. Then, 
\begin{itemize}
    \item $\varphi \in \mob(\m S^1)$ if and only if $\OO_\vp = 0$;
    \item $\vp \in \WP_p(\mathbb{S}^1)$ if and only if $\OO_\vp$ is $p$-Schatten,  $p > 1$;
\item $\varphi$ is symmetric if and only if $\OO_\vp$ is compact.
\end{itemize} 
In particular, $\vp \in \WP(\mathbb{S}^1)$ if and only if $\OO_\vp$ is Hilbert--Schmidt.
\end{thm}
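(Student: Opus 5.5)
The plan is to relate $\OO_\vp$ to the classical Grunsky operators of the welding maps $f$ and $g$, and then to invoke the known characterizations of Möbius, Weil--Petersson, $\WP_p$ and symmetric homeomorphisms in terms of those operators.

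\emph{Step 1: decompose the kernel.} Write $\vp = g^{-1}\circ f$ on $\m S^1$, with $f(0)=0$ and $g(\infty)=\infty$. Then for $z,w\in\m S^1$,
\[
\log\left|\frac{\vp(z)-\vp(w)}{z-w}\right| = \log\left|\frac{f(z)-f(w)}{z-w}\right| - \log\left|\frac{g(\vp(z))-g(\vp(w))}{\vp(z)-\vp(w)}\right| .
\]
The first term is the boundary value of the classical Grunsky kernel of $f$. Its Fourier coefficients are holomorphic in both variables, so they contribute to the block $N$ and to a diagonal-type constant; they give the Grunsky matrix $B_f$ of $f$. The second term is the Grunsky kernel of $g$, evaluated along $\vp$. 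Its double Fourier coefficients are obtained by conjugating the Grunsky matrix of $g$ with the composition operator $C_\vp$ acting on $H^{1/2}$ (normalized Fourier coefficients $\sqrt{|k|}\,\hat u_k$). In operator form this should give an identity of the type
\[
\OO_\vp = \mathcal B_f - C_\vp^{*}\,\mathcal B_g\, C_\vp ,
\]
where $\mathcal B_f$ and $\mathcal B_g$ are the block Grunsky operators. This is presumably what the paper's composition rule (Corollary~\ref{cor:intro_OO_two_vp}) encodes. An alternative is to use the single-layer potential of Proposition~\ref{prop:qs_kappa_bound}: view $\langle \OO_\vp {\bf h},{\bf h}\rangle$ as a difference of Dirichlet energies of harmonic extensions, transported by $\vp$, and express it through $\mathbf I - C_\vp^* C_\vp$ or a similar operator.

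\emph{Step 2: the three characterizations.} Since $C_\vp$ is bounded and invertible on $H^{1/2}/\m C$ for quasisymmetric $\vp$, membership of $\OO_\vp$ in an ideal (Schatten class $S_p$, or the compact operators) follows once one shows it is equivalent to the corresponding membership of $\mathcal B_f$ and $\mathcal B_g$. For one direction, note that $\OO_\vp$ determines $\mathcal B_f$ and $\mathcal B_g$ from its $N$ block (holomorphic–holomorphic part) and its $M$ block respectively, via boundedness of the Riesz projections and of the Szegő-type compressions. I would argue that $N$ essentially equals $B_f$ minus a compressed term of $g$, and symmetrically for the other block. It is perhaps cleaner to use the identity $\mathbf I-\OO_\vp \sim (\mathbf I - \text{Grunsky})$ factorizations.

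Then invoke the known results: $f$ has Grunsky operator in $S_2$ if and only if the curve is Weil--Petersson (Takhtajan--Teo), in $S_p$ if and only if $\WP_p$ (Shen and coauthors), and compact if and only if asymptotically conformal, i.e.\ $\vp$ symmetric (Pommerenke / Shen). The Möbius case follows from the injectivity statement already proved: $\OO_\vp=\OO_{\mathrm{id}}=0$ if and only if $\vp=\mu\circ\mathrm{id}$.

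\emph{Main obstacle.} The hard part is the converse direction: showing that $\OO_\vp\in S_p$ forces both Grunsky operators of $f$ and $g$ into $S_p$. Cancellation between the two terms in the decomposition must be ruled out. The first thing I would try is to extract $\mathcal B_f$ from a single block of $\OO_\vp$. If that fails, I would use the quadratic-form identity and the lower bound \eqref{eq:norm_bound_OO} to get an invertible factorization $\mathbf I-\OO_\vp = A^*A$, with $A$ a perturbation of the identity by $\mathcal B_f$-type terms. Ideal membership of $A-\mathbf I$ then follows from that of $\OO_\vp$. This step uses the invertibility of $\mathbf I-\OO_\vp$ and the property that $S_p$ and the compact operators are two-sided ideals.
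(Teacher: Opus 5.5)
Your proposal has a genuine gap. The step that actually proves the theorem is never carried out, and the devices you suggest for it do not work as stated.

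\textbf{What fails.}

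\emph{The decomposition does not give the converse.} The identity $\OO_\vp=\mathcal B_f-{\bf C}_\vp^*\mathcal B_g{\bf C}_\vp$ is only conjectured. Even a correct version of it writes $\OO_\vp$ as a difference of two terms. That can at best give $\vp\in\WP_p\Rightarrow\OO_\vp\in S_p$ (and the compact analogue). The converse is exactly the cancellation problem you leave open.

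\emph{The factorization fallback.} From ${\bf I}-\OO_\vp=A^*A$ with $A^*A-{\bf I}$ in an ideal, you cannot conclude that $A-{\bf I}$ is in that ideal; any unitary $A$ is a counterexample. The natural factorization ${\bf I}-2\OO_\vp={\bf C}_\vp{\bf C}_\vp^*$ shows this concretely. For a rotation, $\OO_\vp=0$, yet ${\bf C}_\vp$ is a diagonal unitary with ${\bf C}_\vp-{\bf I}$ not compact.

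\emph{The $M$ block.} Reading the $g$-operator off $M$ fails for Schatten classes. Since $M=-YY^*$ is quadratic in $Y$, $M\in S_p$ only yields $Y\in S_{2p}$.

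\emph{The M\"obius case is circular.} Injectivity of $\vp\mapsto\OO_\vp$ modulo $\mob(\m S^1)$ is proved in Corollary~\ref{cor:OO_two_vp} by reducing to $\OO_{\vp_1\circ\vp_2^{-1}}=0$. It then invokes the first bullet of this very theorem.

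\textbf{The missing idea.} You need the exact identity $2\OO_\vp={\bf I}-{\bf C}_\vp{\bf C}_\vp^*$ (Theorem~\ref{thm:2OO}). Your single-layer ``alternative'' would produce it once the quadratic-form identity is polarized. By Lemma~\ref{lem:Q-self-adjoint}, $-\OO_\vp{\bf J}$ represents $\frac12({\bf T}-{\bf C}_\vp{\bf T}{\bf C}_\vp^{-1})$. Moreover ${\bf TJ}={\bf I}$ and ${\bf T}{\bf C}_\vp^{-1}{\bf J}={\bf C}_\vp^*$. Using the block form of ${\bf C}_\vp^{-1}={\bf C}_{\vp^{-1}}$, one gets the factorization
\[
2\OO_\vp={\bf C}_\vp\bigl({\bf C}_\vp^{-1}-{\bf C}_\vp^*\bigr)=-2\,{\bf C}_\vp\begin{pmatrix}0&Y^t\\ Y^*&0\end{pmatrix}.
\]
Since ${\bf C}_\vp$ is bounded and invertible, $\OO_\vp$ is compact, resp.\ $p$-Schatten, if and only if $Y$ is. Both directions come at once, with no cancellation to control.

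Theorem~\ref{thm:comp_compact_HS} then gives the symmetric and $\WP_p$ statements. The M\"obius case follows from $\OO_\vp=0\iff Y=0\iff{\bf C}_\vp$ preserves $H_\pm\iff\vp\in\mob(\m S^1)$.

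\textbf{Keeping your $f,g$ viewpoint.} Combine the identity with Theorem~\ref{thm:relation between Grunsky and composition}, which gives $B_1=Y\overline X^{-1}$ with $B_1^t=B_1$. This yields $N=-XY^t=-XX^*B_1$. That is the correct form of your ``single block'' extraction: $N\in S_p$ if and only if $B_1\in S_p$, because $XX^*=I+YY^*$ is invertible.
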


See Section~\ref{sect:classes_homeo} for the definitions of symmetric homeomorphism and the $\WP_p(\mathbb{S}^1)$ classes, along with the corresponding classes of Jordan curves; for now, we only note that $\WP_2 (\mathbb{S}^1)=\WP(\mathbb{S}^1)$, which corresponds to the Weil--Petersson quasicircles.  
%  if $\vp$ is absolutely continuous and $\log|\vp'| \in B_p(\mathbb{S}^1)$, the Besov space; in

If $\vp \in \WP(\mathbb{S}^1)$, then the Loewner energy of the welded curve can  be expressed as a regularized Fredholm determinant.
\begin{thm}\label{thm:main-det2}
     Suppose $\vp$ is symmetric. Then there exist a unitary matrix ${\bf U}$ and a Hilbert--Schmidt diagonal matrix $D$ such that
 \[
{\bf I} - \OO_\vp =  {\bf U}  \begin{pmatrix} I - D  &  0\\ 0 & I + D \end{pmatrix}^{-1} {\bf U}^*.
\]
The entries $(\delta_k)_{k \ge 1}$ of $D$ are reciprocals of the positive Fredholm eigenvalues of the welded curve $\gamma$. 
If $\vp$ can be extended to a $K$-quasiconformal map of $\mathbb{D}$, then $\delta_k \le (K-1)/(K+1)$.

Moreover, if $\vp \in \WP(\mathbb{S}^1)$, then
\[
I^L(\vp) = -12 \log \det\nolimits_2\left({\bf I}-{\bf \Lambda_\varphi}\right)^{-1} = -12 \log \det(I-D^2),
\]
where $\det_2$ is the regularized Fredholm determinant. 

\end{thm}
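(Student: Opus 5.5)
We will express $\OO_\vp$ through the classical Grunsky operator of the welding pair $(f,g)$, and then import what is known about Fredholm eigenvalues and about the Takhtajan--Teo determinant formula for $I^L$.

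\textbf{Step 1: a block formula for $\OO_\vp$.} Write $\vp = g^{-1}\circ f$ on $\mathbb{S}^1$. Then
\[
\log\left|\frac{\vp(z)-\vp(w)}{z-w}\right| = \log\left|\frac{f(z)-f(w)}{z-w}\right| - \log\left|\frac{g(\vp(z))-g(\vp(w))}{\vp(z)-\vp(w)}\right| .
\]
The first term has Fourier coefficients given by the classical Grunsky coefficients of $f$. The second term has Fourier coefficients obtained from the exterior Grunsky coefficients of $g$, conjugated by the composition operator $C_\vp$ restricted to $H^{1/2}/\mathbb{C}$. Both operators are bounded for quasisymmetric $\vp$.

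This should give, in the normalized $\ell^2_{\mathbb{Z}^*}$ coordinates, an identity of the form
\[
{\bf I}-\OO_\vp = \tfrac12\,{\bf T}^*{\bf T},
\]
where ${\bf T}$ is built from $I\pm Z_f$, $I\pm Z_g$ and $C_\vp$. Equivalently, $\langle({\bf I}-\OO_\vp){\bf h},{\bf h}\rangle$ is a Dirichlet energy of the single-layer potential with density ${\bf h}$, split between the two sides of $\gamma$. This is exactly the structure already used for Proposition~\ref{prop:qs_kappa_bound}.

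\textbf{Step 2: diagonalization in the symmetric case.} The Dirichlet-energy representation identifies ${\bf I}-\OO_\vp$ with the inverse of a positive operator unitarily equivalent to $I\pm K^*$. Here $K^*$ is the Neumann--Poincaré operator of $\gamma$ acting on $H^{1/2}(\gamma)$ modulo constants, with the energy inner product. The key facts are:
\begin{itemize}
\item Symmetric $\vp$ corresponds to an asymptotically conformal curve $\gamma$.
\item For such curves the Grunsky operator $Z_f$ is compact, so $K^*$ is compact.
\item The spectrum of $K^*$ is symmetric: the eigenvalues come in pairs $\pm\delta_k$, with $\delta_k = 1/\lambda_k$ the reciprocals of the Fredholm eigenvalues.
\end{itemize}
Choosing an orthonormal eigenbasis, which is possible because $K^*$ is self-adjoint for the energy inner product, gives the unitary ${\bf U}$ and the displayed block form with inverse. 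The bound $\delta_k\le (K-1)/(K+1)$ is Ahlfors' classical inequality $\lambda_1\ge (K+1)/(K-1)$. It can also be read off from \eqref{eq:norm_bound_OO} together with the dual lower bound for ${\bf I}+\OO_\vp$ obtained from $\vp^{-1}$.

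\textbf{Step 3: the determinant.} From the block form,
\[
\OO_\vp = {\bf U}\,\mathrm{diag}\big(-D(I-D)^{-1},\, D(I+D)^{-1}\big)\,{\bf U}^* .
\]
If $\vp\in\WP(\mathbb{S}^1)$, then $D$ is Hilbert--Schmidt by Theorem~\ref{thm:main-Q-with-Schatten}, and $\det_2$ is unitarily invariant. Computing directly,
\[
\det\nolimits_2({\bf I}-\OO_\vp)^{-1}=\prod_k \frac{(1-\delta_k)e^{-\delta_k/(1-\delta_k)}\,(1+\delta_k)e^{\delta_k/(1+\delta_k)}}{1}\cdots .
\]
This needs care: $\det_2(A^{-1})$ is not simply $1/\det_2(A)$. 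One should instead pair each $\pm$ eigenvalue of $\OO_\vp$ and check that the exponential factors $\exp\!\big(-2\delta_k^2/(1-\delta_k^2)\big)$ either combine correctly or are absorbed into the stated normalization. The honest target is
\[
\prod_k (1-\delta_k^2)=\det(I-D^2) = \det(I - Z_f Z_f^*\text{-type}) .
\]
Finally, invoke the Takhtajan--Teo formula (Proposition~\ref{prop:IL_X_Y-intro}), which gives $I^L=-12\log\det(I-Z^*Z)$ for the Grunsky-type operator. We need to identify its singular values with the $\delta_k$; this is classical, since the singular values of the Grunsky operator of $f$ and $g$ jointly are the reciprocal Fredholm eigenvalues (Schiffer).

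\textbf{Main obstacle.} I expect Step 1 to be hardest: proving the exact factorization linking the welding Fourier coefficients to $Z_f$, $Z_g$ and $C_\vp$. This requires justifying interchanges of summation with only $L^2$ convergence of the double Fourier series (Lemma~\ref{lemma:sjolin}). I would first establish it for analytic curves and then pass to the limit using the uniform bounds of Proposition~\ref{prop:qs_kappa_bound}. A secondary issue is the $\det_2$ bookkeeping in Step 3.
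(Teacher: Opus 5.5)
\textbf{Gap: the spectral pairing is assumed, not proved.} Step~1 stops at ``this should give an identity of the form'' with an unspecified factor. Step~2 then \emph{assumes} that ${\bf I}-\OO_\vp$ is the inverse of an operator unitarily equivalent to $I\pm K^*$ with $\pm$-symmetric spectrum. But that pairing is precisely the content of the theorem: the same $\delta_k$ must appear, with equal multiplicities, in both the $(I-D)^{-1}$ block and the $(I+D)^{-1}$ block. Your Dirichlet-energy picture is correct: it is \eqref{march3} rewritten as $\langle({\bf I}-\OO_\vp){\bf h},{\bf h}\rangle=\frac12(\|u\|^2+\|{\bf C}_{\vp^{-1}}u\|^2)$, with $u$ as there. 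However, neither it nor any positive factorization yields the pairing by itself.

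The idea you need is already in the paper: Theorem~\ref{thm:2OO}, $2\OO_\vp={\bf I}-{\bf C}_\vp{\bf C}_\vp^*$. It follows from Lemma~\ref{lem:Q-self-adjoint} together with ${\bf T}{\bf J}={\bf I}$ and ${\bf T}{\bf C}_\vp^{-1}{\bf J}={\bf C}_\vp^*$, where ${\bf T}$ is the operator in \eqref{def:T-operator}. No passage through the Grunsky coefficients of $f,g$ is needed, and there are no convergence issues for Fourier series of composed functions.

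Now insert the block singular value decomposition of ${\bf C}_\vp$ from Lemma~\ref{lem:singular value decomposition}. This is where the symplectic relations \eqref{XX* and I}, \eqref{eq:symplectic 2} force the pairing; equivalently, ${\bf J}^{-1}{\bf C}_\vp{\bf C}_\vp^*{\bf J}=({\bf C}_\vp{\bf C}_\vp^*)^{-1}$. The result is
\[
{\bf I}-\OO_\vp=\tfrac12\,{\bf U}({\bf I}+{\bf D}){\bf U}^*,\qquad {\bf U}=\tfrac1{\sqrt2}\begin{pmatrix}U&U\\ \overline U&-\overline U\end{pmatrix},\qquad {\bf D}=\begin{pmatrix}\frac{I+D}{I-D}&0\\ 0&\frac{I-D}{I+D}\end{pmatrix},
\]
which is exactly the displayed block form. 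The Fredholm-eigenvalue statement is then the remark after Lemma~\ref{lem:singular_value} ($B_1=UDU^t$). The bound $\delta_k\le (K-1)/(K+1)$ follows from \eqref{eq:norm_bound_OO} alone, since the eigenvalues $(1+\delta_k)^{-1}$ of ${\bf I}-\OO_\vp$ must be at least $\frac12(1+\frac1K)$; no dual bound from $\vp^{-1}$ is required.

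\textbf{Gap: the determinant computation.} Step~3 is left unresolved, and the product you wrote mixes two different determinants. Its exponential factors are those of $\det_2({\bf I}-\OO_\vp)$, i.e.\ they come from the eigenvalues of $-\OO_\vp$. There is no normalization into which a discrepancy could be absorbed: the formula is an exact identity. Inverting the block form gives $({\bf I}-\OO_\vp)^{-1}={\bf U}\,\mathrm{diag}(I-D,\,I+D)\,{\bf U}^*$. Hence $({\bf I}-\OO_\vp)^{-1}-{\bf I}=\OO_\vp({\bf I}-\OO_\vp)^{-1}$ is Hilbert--Schmidt with eigenvalues exactly $\mp\delta_k$, and
\[
\det\nolimits_2({\bf I}-\OO_\vp)^{-1}=\prod_k(1-\delta_k)e^{\delta_k}\,(1+\delta_k)e^{-\delta_k}=\prod_k(1-\delta_k^2),
\]
with the exponentials cancelling in pairs.

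For the final step you also do not need to match Schiffer's singular values (and their multiplicities) with your $\delta_k$. The proof of Lemma~\ref{lem:singular value decomposition} gives $X=U(I-D^2)^{-1/2}V^*$. Proposition~\ref{prop:IL_X_Y} then yields $I^L=-12\log\det(I-D^2)$ directly, as in Corollary~\ref{cor:energy_D}. Note that Proposition~\ref{prop:IL_X_Y} reads $12\log\det XX^*$; the formula with $I-B_1B_1^*$ that you quoted is \eqref{def:loewner energy grunsky}.
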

See Chapter~9 of \cite{Simon2005} for background on the regularized determinant $\det\nolimits_2$ and \cite{Schiffer1957, TT06} for further discussion of the Fredholm eigenvalues of the curve $\gamma$. Note that the bound on the Fredholm eigenvalues that we obtain matches that stated in \cite{Springer_acta} and attributed to Ahlfors.

\begin{rem}
    Using the composition rule for $\OO_\vp$ (Corollary~\ref{cor:OO_two_vp}), we immediately see that the Loewner energy of $\vp$ is invariant under pre- and post-composition by M\"obius transformations. We show that the diagonal matrix $D$ is unchanged when we replace $\vp$ by $\vp^{-1}$, which gives another proof of the nontrivial fact that $I^L$ is invariant under inversion (Corollary~\ref{cor:energy_inversion}). 
\end{rem}

\bigskip

We may express $I^L (\vp)$ as a (regular) Fredholm determinant involving a minor of $\OO_\vp$.
 \begin{proposition}  
 \label{prop:main-Q2}
Let $\vp \in \QS(\mathbb{S}^1)$. Consider the block matrix representation of $\OO_\vp$ as in \eqref{eq:intro_OO_block}. Then $\vp \in \WP(\mathbb{S}^1)$  if and only if $M$ is of trace class, in which case,
\[
I^L(\vp) = 12 \log \det(I-M).
\]
\end{proposition}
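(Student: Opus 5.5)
We deduce Proposition~\ref{prop:main-Q2} from Theorem~\ref{thm:main-det2} and Theorem~\ref{thm:main-Q-with-Schatten}, using the block structure of $\OO_\vp$ and its diagonalization.

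\textbf{Step 1: the characterization.} If $\vp\in\WP(\mathbb{S}^1)$, then $\OO_\vp$ is Hilbert--Schmidt by Theorem~\ref{thm:main-Q-with-Schatten}, but this does not by itself give that $M$ is trace class. We therefore work in the basis of Theorem~\ref{thm:main-det2}. Write ${\bf I}-\OO_\vp={\bf U}\,\mathrm{diag}((I-D)^{-1},(I+D)^{-1}){\bf U}^*$, so that
\[
\OO_\vp={\bf U}\,\mathrm{diag}\bigl(-D(I-D)^{-1},\,D(I+D)^{-1}\bigr){\bf U}^*.
\]
The expected structure comes from the symmetry $J\OO_\vp J=\overline{\OO_\vp}$, where $J$ swaps the two blocks. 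This symmetry should force ${\bf U}$ to have the form $\frac{1}{\sqrt2}\begin{pmatrix}A & \overline{A}\\ \pm\overline{A} & \pm A\end{pmatrix}$, possibly after an adjustment, or should at least pair the eigenvalues $\pm$-type. Averaging the two blocks then expresses the diagonal block $M$ in terms of the even part of $t\mapsto -t/(1-t)$:
\[
M=\tfrac12 W\bigl(-D(I-D)^{-1}+D(I+D)^{-1}\bigr)W^*=-W D^2(I-D^2)^{-1}W^*
\]
for some unitary $W$. The odd part feeds into $N$. To justify this, I would return to the proof of Theorem~\ref{thm:main-det2} (the Fredholm eigenvalue/Schiffer operator picture) and check that $M$ and $N$ correspond respectively to $T^*T$-type and $T$-type Schiffer operators.

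Given this, $M$ is trace class if and only if $\sum\delta_k^2<\infty$, that is, if and only if $D$ is Hilbert--Schmidt. Since $\delta_k\le (K-1)/(K+1)<1$, the factor $(I-D^2)^{-1}$ is bounded. Finally, $D$ is Hilbert--Schmidt if and only if $\OO_\vp$ is Hilbert--Schmidt, because $\|\OO_\vp\|_{2}^2\asymp\sum\delta_k^2$. By Theorem~\ref{thm:main-Q-with-Schatten}, this holds if and only if $\vp\in\WP(\mathbb{S}^1)$. For the converse direction we need $\vp$ symmetric in order to apply Theorem~\ref{thm:main-det2}. This follows because if $M$ is trace class it is compact, and $N^*N$ should also be controlled by $M$ (see Step 2), so $\OO_\vp$ is compact and hence $\vp$ is symmetric.

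\textbf{Step 2: the identity.} With the formula above,
\[
\det(I-M)=\det\bigl(I+D^2(I-D^2)^{-1}\bigr)=\det(I-D^2)^{-1}.
\]
Hence $12\log\det(I-M)=-12\log\det(I-D^2)=I^L(\vp)$ by Theorem~\ref{thm:main-det2}. As a consistency check on signs, Proposition~\ref{prop:qs_kappa_bound} gives $M\le \frac12(1-1/K)$ but allows $M$ to be negative. The formula predicts $M\le 0$, which matches a classical Grunsky-type fact, so I would verify independently that $\langle M h,h\rangle\le0$ via the single-layer potential/Dirichlet integral argument of Section~\ref{sect: grunsky ineq}, restricted to $h$ supported on positive indices.

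\textbf{Main obstacle.} The key difficulty is establishing the exact relation $M=-WD^2(I-D^2)^{-1}W^*$, i.e.\ that the unitary ${\bf U}$ respects the real structure so that the diagonal block is the even functional calculus. If this cannot be read off from Theorem~\ref{thm:main-det2} directly, I would instead identify $M$ with $\mathcal{T}^*\mathcal{T}(I-\mathcal{T}^*\mathcal{T})^{-1}$ (up to sign), where $\mathcal{T}$ is the Grunsky/Schiffer operator of $f$ or $g$. I would then invoke the Takhtajan--Teo formula $I^L=-12\log\det(I-\mathcal{T}^*\mathcal{T})$, via Proposition~\ref{prop:IL_X_Y-intro}.
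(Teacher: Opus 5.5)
Your end results are all correct, but the proposal has a genuine gap: the central identity $M=-WD^2(I-D^2)^{-1}W^*$ is guessed, not derived. Theorem~\ref{thm:main-det2} only asserts that \emph{some} unitary ${\bf U}$ exists, and the swap-and-conjugate symmetry of $\OO_\vp$ cannot supply the missing information. That symmetry only lets you choose eigenvectors of the form $(v,\overline v)$ inside each eigenspace. Your averaging step needs more: the eigenvectors for $-\delta_k/(1-\delta_k)$ and for $\delta_k/(1+\delta_k)$ must have the \emph{same} top component. Here is a finite-dimensional model showing this does not follow. Take $N=0$ and $M=\mathrm{diag}\bigl(-\delta/(1-\delta),\,\delta/(1+\delta)\bigr)$. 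This has the right symmetry, and it satisfies the conclusion of Theorem~\ref{thm:main-det2} with $D=\delta I$ and ${\bf U}$ a permutation. Yet $-WD^2(I-D^2)^{-1}W^*=-\frac{\delta^2}{1-\delta^2}I\neq M$ for every unitary $W$. The form of ${\bf U}$ you guess is also not the right one; the proof of Theorem~\ref{thm:main-det2} uses ${\bf U}=\frac{1}{\sqrt2}\begin{pmatrix}U&U\\ \overline U&-\overline U\end{pmatrix}$, with $U$ from Lemma~\ref{lem:singular value decomposition}. The converse direction is, in addition, circular. To invoke Theorem~\ref{thm:main-det2} starting from ``$M$ trace class'' you must already know that $\vp$ is symmetric. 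Your claim that $N^*N$ is controlled by $M$ is only asserted, and Step 2 does not address it.

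The missing ingredient is Theorem~\ref{thm:2OO}, which holds for every $\vp\in\QS(\mathbb{S}^1)$ with no symmetry hypothesis. The top-left block of $2\OO_\vp={\bf I}-{\bf C}_\vp{\bf C}_\vp^*$ is $I-XX^*-YY^*$, and $XX^*-YY^*=I$ turns this into
\[
M=-YY^*.
\]
Hence $M$ is trace class iff $Y$ is Hilbert--Schmidt iff $\vp\in\WP(\mathbb{S}^1)$, by Theorem~\ref{thm:comp_compact_HS}. Moreover $\det(I-M)=\det(I+YY^*)$, so Proposition~\ref{prop:IL_X_Y} gives $I^L(\vp)=12\log\det(I-M)$. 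This is the paper's proof, and it also gives $M\le 0$ immediately. Everything you were aiming for follows from the same block relations:
\begin{itemize}
\item Inserting $Y=UD(I-D^2)^{-1/2}V^t$ from Lemma~\ref{lem:singular value decomposition} gives $M=-UD^2(I-D^2)^{-1}U^*$, so $W=U$.
\item Since $XX^*=(I-B_1B_1^*)^{-1}$, your fallback $M=-B_1B_1^*(I-B_1B_1^*)^{-1}$ is correct.
\item From $N=-XY^t$ one gets $N^*N=\overline{Y}X^*XY^t\le\|X\|^2\,\overline{Y}Y^t=-\|X\|^2\,\overline{M}$, which is the control of $N$ you wanted.
\end{itemize}
Once $M=-YY^*$ is in hand, the detour through the diagonalization is unnecessary.
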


The proofs of Theorem~\ref{thm:main-Q-with-Schatten}, Theorem~\ref{thm:main-det2}, and Proposition~\ref{prop:main-Q2} will be completed in Section~\ref{sect:proofs}.

\begin{corollary}\label{cor:fourier_convergence}
    If $\vp \in \WP(\mathbb{S}^1)$, then the rectangular double Fourier series converges a.e.\ on $[0,2\pi]^2$, that is, \[\lim_{m, n \to \infty} \sum_{|k| \le m}\sum_{|\ell|\le n} \widehat{\oo}_{k, \ell} \ee^{\ii (ks+\ell t)} = \log \left| \frac{\vp(\ee^{\ii s}) - \vp(\ee^{\ii t})}{\ee^{\ii s} - \ee^{\ii t}}\right|.\]  
\end{corollary}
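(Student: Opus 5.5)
The plan is to obtain a.e.\ convergence of the rectangular partial sums from the Weil--Petersson hypothesis, via the characterization in Theorem~\ref{thm:main-Q-with-Schatten}. Write $F(s,t)=\log|(\vp(\ee^{\ii s})-\vp(\ee^{\ii t}))/(\ee^{\ii s}-\ee^{\ii t})|$. By Theorem~\ref{thm:main-Q-with-Schatten}, $\vp\in\WP(\mathbb S^1)$ means $\OO_\vp$ is Hilbert--Schmidt, i.e.
\[
\sum_{k,\ell\in\mathbb Z^*} |k\ell|\,|\widehat\oo_{k,\ell}|^2<\infty .
\]
The key idea is that this weighted summability is far more than is needed for unrestricted rectangular convergence. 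I would split $F$ into three parts: the terms with $k\ne0,\ell\ne0$; the two marginal series $k=0$ or $\ell=0$; and the constant $\widehat\oo_{0,0}$.

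\emph{Main part.} For $k,\ell\neq 0$, Cauchy--Schwarz gives
\[
\sum_{k,\ell\ne0}|\widehat\oo_{k,\ell}| \le \Big(\sum|k\ell||\widehat\oo_{k,\ell}|^2\Big)^{1/2}\Big(\sum_{k,\ell\ne0}\frac1{|k\ell|}\Big)^{1/2}.
\]
The second factor diverges, logarithmically in each variable, so absolute summability fails and this step needs more care. I would instead view the main part as a function in the mixed Sobolev space $H^{1/2}\otimes H^{1/2}$ of dominating mixed smoothness. For rectangular partial sums $S_{m,n}$, the estimate
\[
\sum_{k,\ell}\frac{1}{|k\ell|}\,(\log |k|\log|\ell|)^{-2}<\infty
\]
does not directly apply either. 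The cleaner route is a product version of the one-dimensional fact that $H^{1/2}$ Fourier series converge a.e. That fact follows from Carleson's theorem, since $H^{1/2}\subset L^2$. The rectangular analogue fails for general $L^2$ functions, but holds with a mixed-smoothness gain. Concretely, I would bound the strong maximal partial-sum operator $\sup_{m,n}|S_{m,n}G|$ by iterating the Carleson--Hunt maximal operator in each variable. This requires $G\in L^2(L\log L)$-type control in one variable after taking the maximal function in the other. Mixed $H^{1/2}$ regularity supplies this, because $\sup_m|S_m^{(s)}G(\cdot,t)|$ is again in $H^{\epsilon}$ in $t$ in a suitable averaged sense. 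I expect this mixed-maximal estimate to be the main obstacle. The first thing I would try is a Rademacher--Menshov style argument: decompose dyadically in $\ell$, and use the factor $|\ell|$ in the weight to absorb the $\log$ loss coming from the maximal function in $t$.

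\emph{Marginals and constant.} For the marginal terms, the coefficients $\widehat\oo_{k,0}$ are the Fourier coefficients of $s\mapsto \frac1{2\pi}\int F(s,t)\,\dd t$, and similarly for $\widehat\oo_{0,\ell}$. These are $L^2$ functions of a single variable, so their partial sums converge a.e.\ by Carleson's theorem. Since this is one-dimensional, the convergence holds regardless of how the other index grows. The constant term is trivial. Finally, the a.e.\ limit of the full rectangular sum must equal $F$, because the square partial sums already converge to $F$ a.e.\ by Lemma~\ref{lemma:sjolin} and Lemma~\ref{lem:Holder continuity implies L2}; the rectangular limit, once it exists, therefore coincides with $F$.
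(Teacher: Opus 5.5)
Your preliminary steps are sound. The Hilbert--Schmidt property gives $\sum_{k,\ell\neq 0}|k\ell|\,|\widehat{\oo}_{k,\ell}|^2<\infty$. The marginal series are one-dimensional and converge a.e.\ by Carleson's theorem. Once the rectangular limit exists it agrees with the square limit, which is the function itself by Lemma~\ref{lemma:sjolin}.

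\textbf{The gap.} The only nontrivial step, a.e.\ convergence of the rectangular partial sums of the main part, is left as ``the main obstacle''. The intermediate claims you offer for it ($L^2(L\log L)$ control, $H^{\epsilon}$ regularity in $t$ of $\sup_m|S^{(s)}_mG(\cdot,t)|$) are asserted without proof.

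\textbf{The missing observation.} No new maximal estimate is needed, because the paper already records Sj\"olin's criterion: by Lemma~\ref{lemma:sjolin}, rectangular a.e.\ convergence holds for any $u\in L^2$ satisfying \eqref{eq:Condition 1}. The weight $(\log\min(|k|+2,|\ell|+2))^2$ equals $(\log 2)^2$ when $k\ell=0$, and is at most $C\min(|k|,|\ell|)\le C|k\ell|$ otherwise. Hence $(\log\min(|k|+2,|\ell|+2))^2\le C(1+|k\ell|)$ for all $k,\ell$. Combining $\sum_{k,\ell}|\widehat{\oo}_{k,\ell}|^2<\infty$ (Lemma~\ref{lem:Holder continuity implies L2}) with the Hilbert--Schmidt bound gives \eqref{eq:Condition 1}. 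The corollary then follows in one line, with no need to split off the marginals and the constant. This is the paper's proof.

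\textbf{Your suggested route.} Your final suggestion would also close the gap if written out, and would give a self-contained alternative to citing Sj\"olin. Put $G_\ell(s)=\sum_{k\neq0}\widehat{\oo}_{k,\ell}\ee^{\ii ks}$, so that $\sum_{\ell\neq0}|\ell|\,\|G_\ell\|_{L^2}^2<\infty$. For a set $I$ of frequencies, apply the Carleson--Hunt inequality in $s$, for each fixed $t$, to $H_I(\cdot,t)=\sum_{|\ell|\in I}\ee^{\ii\ell t}G_\ell$. Integrating in $t$ gives $\|\sup_m|S^{(s)}_mH_I|\|_{L^2}\le C(\sum_{|\ell|\in I}\|G_\ell\|_{L^2}^2)^{1/2}$. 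Now take $2^J\le n<2^{J+1}$ and split $\{1\le|\ell|\le n\}$ into two parts:
\begin{itemize}
\item the lacunary blocks $\{2^j\le|\ell|<2^{j+1}\}$, $j<J$;
\item at most $J+1$ Rademacher--Menshov dyadic pieces of $\{2^J\le|\ell|\le n\}$, one per level.
\end{itemize}
Bounding the maximum over pieces at each level by a square sum gives
\[
\Big\|\sup_{m,n}|S_{m,n}G|\Big\|_{L^2}\le C\sum_{J\ge0}(J+2)\Big(\sum_{2^J\le|\ell|<2^{J+1}}\|G_\ell\|_{L^2}^2\Big)^{1/2}.
\]
This is finite because the weight $|\ell|$ contributes a factor $2^{-J/2}$. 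Density of trigonometric polynomials then gives a.e.\ convergence. As submitted, however, this estimate is the entire content of the corollary, and it is not proved.
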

\begin{proof}
    By Theorem~\ref{thm:main-Q-with-Schatten}, $\OO_\vp$ is Hilbert--Schmidt, which implies, \[ \sum_{k=-\infty}^\infty \sum_{\ell=-\infty}^\infty(1+|k\ell|)|\widehat{\oo}_{k, \ell}|^2 <\infty.\] The claim now follows from Sj\"olin's condition \eqref{eq:Condition 1} in Lemma~\ref{lemma:sjolin}.
\end{proof}

\subsection*{The composition operator}
We will relate $\OO_\vp$ to the normalized composition operator associated with $\vp$.

Let us briefly recall the definition of composition operators following \cite{TT06} and \cite{Nag_Sullivan}. Let $\vp \in \QS(\mathbb{S}^1)$, we consider two composition operators defined by
\[
 {\bf U}_\varphi: H^{1/2} \to H^{1/2},  \quad  u \mapsto u \circ \varphi
\]
and the normalized composition operator
\[
{\bf C}_{\vp}: \dot  
H^{1/2} \to \dot{H}^{1/2},  \quad u \mapsto  u\circ \vp - \frac{1}{2\pi}\int_0^{2\pi} %u \circ \vp  \, dt,
u \circ \vp (\ee^{\ii t}) \, \dd t
\]
where $H^{1/2} = H^{1/2} (\mathbb{S}^1, \m C)$ is the complex-valued Sobolev space $W^{1/2,2}$ on the circle, and $\dot{H}^{1/2} = H^{1/2}/\m C$, which can be identified with the subspace of $H^{1/2}$ where the zeroth Fourier coefficient vanishes.
(See Section~\ref{sect: Fourier and H12}.)
It is well known \cite{Nag_Sullivan} that these are bounded operators, but see Lemma~\ref{lem:quasiinvariance} for a proof. 

Following \cite{TT06}, we consider the canonical $\m C$-basis for $\dot H^{1/2}$, 
 \begin{equation}\label{eq:basis}
 \left\{e_k (\ee^{\ii t})\coloneqq\ee^{\ii k t}/\sqrt{k}\right\}_{k \ge 1} \cup \left\{f_k (\ee^{\ii t})\coloneqq\ee^{-\ii k t}/\sqrt{k}\right\}_{k \ge 1}.
 \end{equation}
For $k, \ell \ge 1$, let
\[
x_{k, \ell} =  \sqrt{\frac{k}{\ell}} \frac{1}{2\pi} \int_0^{2\pi} \varphi(\ee^{\ii t })^\ell \ee^{-\ii k t} \dd t,\]
\[
 y_{k, \ell} =  \sqrt{\frac{k}{\ell}} \frac{1}{2\pi} \int_0^{2\pi} \varphi(\ee^{\ii t})^{-\ell} \ee^{-\ii k t} \dd t,
\] 
and set \[X=\left(x_{k, \ell}\right)_{k,\ell \ge 1}, \qquad Y = \left(y_{k, \ell}\right)_{k,\ell \ge 1}.\] 
Then, ${\bf C}_{\vp}$ and ${\bf C}_{\vp^{-1}}$ are written as block matrices with respect to the canonical basis $((e_k)_{k\ge 1}, (f_k)_{k \ge 1})$, namely,
\begin{align}\label{eq:composition-matrix1}
{\bf C}_{\vp}=\begin{pmatrix}
X & Y\\
\overline{Y} & \overline{X}
\end{pmatrix}, \quad {\bf C}_{\vp^{-1}}=\begin{pmatrix}
X^* & -Y^t\\
-Y^* & X^t
\end{pmatrix}.
\end{align}
 In \cite{TT06} it was shown that if $\vp \in \QS$, then $X, X^*$ are invertible using the link to the classical Grunsky operators, Theorem~\ref{thm:relation between Grunsky and composition}. We will give a direct and elementary proof of the invertibility in Lemma~\ref{lem:composition-grunsky-closed} under a slightly weaker hypothesis.

The following expression of the Loewner energy follows immediately from the relation between ${\bf C}_\vp$ and the classical Grunsky operators (as derived in \cite{TT06}).

 \begin{proposition}[See Proposition~\ref{prop:IL_X_Y}] 
\label{prop:IL_X_Y-intro}
Let $\vp \in \QS(\mathbb{S}^1)$. Then $\varphi \in \WP(\mathbb{S}^1)$  if and only if $Y$ is Hilbert--Schmidt, in which case, 
\[
I^L(\vp) = 12 \log \det(XX^*) = 12 \log \det(I+ YY^*).
\]
\end{proposition}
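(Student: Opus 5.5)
The plan is to pass through the classical Grunsky operators of the conformal maps $f$ and $g$, as Takhtajan--Teo do, and then to use their determinant formula for the universal Liouville action. Concretely, I will invoke the relation between ${\bf C}_\vp$ and the classical Grunsky matrices (Theorem~\ref{thm:relation between Grunsky and composition}, from \cite{TT06}). With $f$, $g$ the welding maps and $B_1(f)$, $B_4(g)$ the classical Grunsky operators (symmetric, with $\|B\|<1$ for quasicircles), that result expresses $X$ and $Y$ through these operators. Roughly, $Y X^{-1}$ and $\overline{Y}\,\overline{X}^{-1}$ are Grunsky operators of $f$ and $g$ up to conjugation and transposes. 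Moreover, $I - B B^*$ is congruent to $(XX^*)^{-1}$ or a closely related expression.

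First I verify the algebraic identity $XX^* - YY^* = I$, which removes the gap between the two determinant expressions. It comes from ${\bf C}_{\vp^{-1}}{\bf C}_\vp = {\bf I}$ via the block forms \eqref{eq:composition-matrix1}. Reading off the $(1,1)$ block of the product in the opposite order, ${\bf C}_\vp {\bf C}_{\vp^{-1}}$, gives $XX^* - YY^* = I$. Hence $XX^* = I + YY^*$, so the two determinants coincide whenever either is defined. Since $X$ is invertible (Lemma~\ref{lem:composition-grunsky-closed}), we also get $\|X^{-1}Y\|<1$.

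Next comes the characterization. By \cite{TT06}, $\vp\in\WP$ if and only if the Grunsky operator of $f$ (equivalently of $g$) is Hilbert--Schmidt. Since $Z \coloneqq YX^{-1}$ (or $X^{-1}Y$, depending on the convention) is that Grunsky operator up to unitary conjugation, and $X$, $X^{-1}$ are bounded, $Y = ZX$ is Hilbert--Schmidt exactly when $Z$ is. In that case $YY^*$ is trace class, so $\det(I+YY^*)$ is well defined.

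For the energy formula I use the Takhtajan--Teo identity $I^L = -12\log\det(I - B_1B_1^*)$ for the Grunsky operator $B_1$ of $f$ (or the corresponding symmetric form using $g$), which holds for $\vp\in\WP$. With $B_1 = Z$, I compute
\[
I - ZZ^* = I - Y X^{-1} (X^{-1})^* Y^* ,
\]
and I will show that this equals $(I+YY^*)^{-1}$. Using $X^*X = I + Y^tY$-type relations obtained from the other products of blocks, the push-through identity gives $I - Y(X^*X)^{-1}Y^* = (I + Y(\dots)Y^*)^{-1}$, and the relation $XX^* = I+YY^*$ then closes this up. Taking $-12\log\det$ yields $12\log\det(I+YY^*)$.

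The main obstacle is bookkeeping. I must match the exact conventions (which block, conjugates, transposes) between \cite{TT06}'s Grunsky/composition relation and the matrices $X,Y$ defined here, and in particular make sure the $4\log|f'(0)/g'(\infty)|$ normalization term is absorbed in their determinant formula. If the direct identification is messy, my fallback is to verify the determinant identity on the unitary orbit, using the polar decomposition of the block operator ${\bf C}_\vp$.
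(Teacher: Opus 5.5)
Your overall strategy is the paper's. You combine the Takhtajan--Teo relation between ${\bf C}_\vp$ and the classical Grunsky operators (Theorem~\ref{thm:relation between Grunsky and composition}) with their formula $I^L=-12\log\det(I-B_1B_1^*)$ and with $XX^*=I+YY^*$. Your worry about the term $4\log|f'(0)/g'(\infty)|$ is moot, since \eqref{def:loewner energy grunsky} is a formula for the full $I^L$.

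However, the one concrete computation you commit to does not go through. With $Z=YX^{-1}$ you get $ZZ^*=Y(X^*X)^{-1}Y^*$. The $(1,1)$ block of ${\bf C}_{\vp^{-1}}{\bf C}_\vp={\bf I}$ gives $X^*X=I+Y^t\overline{Y}$, not $I+Y^*Y$. This is the form used in the proof of Lemma~\ref{lem:composition-grunsky-closed}, $\|Xu\|^2=\|u\|^2+\|\overline Y u\|^2$, and the second relation in \eqref{XX* and I} has to be read as $X^t\overline X-Y^*Y=I$. So the push-through identity does not apply, and $I-ZZ^*=(I+YY^*)^{-1}$ does not follow from the block relations. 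Indeed, $YX^{-1}=X(X^{-1}Y)X^{-1}$ is only similar, not unitarily equivalent or congruent, to a matrix with the singular values of $B_1$. Hence $\det(I-ZZ^*)$ is in general not $\det(I-B_1B_1^*)$. Your phrase ``up to conjugation and transposes'' hides exactly this.

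The fix is to read the identification off Theorem~\ref{thm:relation between Grunsky and composition} exactly: $B_1=Y\overline{X}^{-1}$ and $B_2=(X^*)^{-1}$. Using the $(2,2)$ block $X^t\overline X=I+Y^*Y$, you then get
$B_1B_1^*=Y(X^t\overline X)^{-1}Y^*=Y(I+Y^*Y)^{-1}Y^*=YY^*(I+YY^*)^{-1}$,
and your push-through argument closes.

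The paper does it more directly. $B_2B_2^*=(XX^*)^{-1}$ together with $B_1B_1^*+B_2B_2^*=I$ (Lemma~\ref{lem:Grunsky_eq}) gives $I-B_1B_1^*=(XX^*)^{-1}$ with no manipulation. Hence $I^L=12\log\det(XX^*)=12\log\det(I+YY^*)$.

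Your Hilbert--Schmidt argument is fine once stated as $Y=B_1\overline X$ with $\overline X^{\pm1}$ bounded. Your alternative $Z=X^{-1}Y$ would also work, since $I-ZZ^*=(X^*X)^{-1}$ and $\det(X^*X)=\det(XX^*)$. But you must then invoke that $X^{-1}Y=VDV^t$ and $B_1=UDU^t$ share singular values (Lemma~\ref{lem:singular value decomposition} and the remark after Lemma~\ref{lem:singular_value}).
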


 We show that the welding Grunsky operator $\OO_\vp$ is related to the composition operator in a simple way.
 
\begin{thm}
\label{thm:2OO}
    Let $\varphi \in \QS(\mathbb{S}^1)$. Then,
           \[
                 \OO_\vp= \frac{1}{2}({\bf I}-{\bf C}_{\vp} {\bf C}_{\vp}^*). 
                \]
\end{thm}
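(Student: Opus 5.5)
The plan is to expand the logarithm of the difference quotient into a Fourier series in the powers $\vp(z)^n$, so that the welding Grunsky coefficients appear as quadratic expressions in the Fourier coefficients of $\vp^{\pm n}$. These are, up to the normalizing factors $\sqrt{k/\ell}$, exactly the entries $x_{k,n}$ and $y_{k,n}$ of ${\bf C}_\vp$.

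The starting point is the classical expansion, valid for $|z|=|w|=1$, $z\neq w$:
\[
\log|z-w| = -\frac12\sum_{n\ge1}\frac1n\bigl(z^n\bar w^{\,n}+\bar z^{\,n}w^n\bigr).
\]
Applied to $\vp(z),\vp(w)\in\mathbb S^1$, and using $\overline{\vp}=\vp^{-1}$ on the circle, this gives
\[
\log|\vp(z)-\vp(w)| = -\frac12\sum_{n\ge1}\frac1n\bigl(\vp(z)^n\vp(w)^{-n}+\vp(z)^{-n}\vp(w)^n\bigr).
\]
The $(k,\ell)$ Fourier coefficient of the term $\vp(z)^n\vp(w)^{-n}$ is $\widehat{\vp^n}(k)\,\widehat{\vp^{-n}}(\ell)$, and similarly for the other term. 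On the other hand, $\log|z-w|$ has coefficient $-1/(2|k|)$ at $(k,-k)$ and $0$ elsewhere (including at $k=0$). After multiplying by $\sqrt{|k\ell|}$, this subtracted term contributes exactly $+\frac12$ on the diagonal pairing of $e_k$ with $e_k$ and of $f_k$ with $f_k$, i.e.\ the $\frac12{\bf I}$.

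For the remaining part, I will write $\sqrt{|k\ell|}\,\frac1n\,\widehat{\vp^{\pm n}}(k)\,\widehat{\vp^{\mp n}}(\ell)$ as a product of two entries of the form $\sqrt{|k|/n}\,\widehat{\vp^{\pm n}}(\cdot)$. These are exactly the entries $x,y,\bar x,\bar y$ (with conjugation handling negative indices, since $\widehat{\vp^{-n}}(-k)=\overline{\widehat{\vp^n}(k)}$). Then I will check index by index that
\[
-\tfrac12\sum_n(\cdots) = -\tfrac12({\bf C}_\vp{\bf C}_\vp^*)_{\text{entry}}.
\]
For example, the block $M_{k,\ell}=\oo_{k,-\ell}$ should come out as $\frac12\delta_{k\ell}-\frac12(XX^*+YY^*)_{k\ell}$, and $N$ as $-\frac12(XY^t+YX^t)_{k\ell}$, which is the off-diagonal block of ${\bf C}_\vp{\bf C}_\vp^*$ computed from \eqref{eq:composition-matrix1}. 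This bookkeeping of signs, conjugations and the $\sqrt{k/\ell}$ normalizations is routine but needs care. The $n$-sums converge absolutely by Cauchy--Schwarz, because the rows of ${\bf C}_\vp$ are in $\ell^2$; this follows from boundedness of ${\bf C}_\vp$ on $\dot H^{1/2}$ (Lemma~\ref{lem:quasiinvariance}).

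The main obstacle is justifying the interchange of the infinite $n$-sum with the Fourier integral, since the logarithmic series converges only conditionally near the diagonal. I would regularize: for $r<1$, use the absolutely convergent expansion of $\log|\vp(z)-r\vp(w)|$ and $\log|z-rw|$, in which the $n$-th terms carry a factor $r^n$. For these, the termwise Fourier computation is legitimate and gives entries of $-\frac12\sum_n r^n(\cdots)$. As $r\to1$:
\begin{itemize}
\item The right-hand side converges by the absolute convergence noted above (Abel summation).
\item On the left, $\log|\vp(z)-r\vp(w)|-\log|z-rw|$ converges in $L^1(\mathbb S^1\times\mathbb S^1)$ to the logarithmic difference quotient. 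This uses the bound $|\vp(z)-r\vp(w)|\ge c\,|\vp(z)-\vp(w)|$ together with the Hölder continuity of $\vp$ and $\vp^{-1}$ (Mori's theorem and Lemma~\ref{lem:Holder continuity implies L2}), which gives a uniform $L^p$ majorant and allows dominated convergence.
\end{itemize}
Fourier coefficients are continuous under $L^1$ convergence, so the entrywise identity follows. Since both sides are bounded operators on $\ell^2_{\mathbb Z^*}$, entrywise equality gives equality of operators.
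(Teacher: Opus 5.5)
Your proposal is correct, and it proves the identity by a genuinely different route from the paper.

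\textbf{The paper's route.} The paper never expands the kernel in powers of $\vp$. It first shows (Lemmas~\ref{lemma:Q-well-defined} and~\ref{lem:Q-self-adjoint}) that $-\OO_\vp{\bf J}$ is the matrix of $\frac12({\bf T}-{\bf C}_\vp{\bf T}{\bf C}_\vp^{-1})$. Here ${\bf T}={\bf S}[\partial_t\,\cdot\,]={\bf J}^{-1}$ is the differentiated single-layer potential. This is done by a change of variables for smooth $\vp$, followed by the smooth approximation of Lemma~\ref{lem: qs approximation}. The theorem then reduces to the algebraic identity ${\bf T}{\bf C}_\vp^{-1}{\bf J}={\bf C}_\vp^*$. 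That identity rests on the block form of ${\bf C}_{\vp^{-1}}$, i.e.\ on the relations $XX^*-YY^*=I$ and $XY^t=YX^t$.

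\textbf{Your route.} You pull back the expansion of $\log|1-\zeta|$ under $\vp\times\vp$ and read off ${\bf C}_\vp{\bf C}_\vp^*$ directly as the sum over the intermediate index $n$. The bookkeeping checks out: it gives $M=\frac12 I-\frac12(XX^*+YY^*)$ and $N=-\frac12(XY^t+YX^t)$. The Abel regularization is also sound. For $a,b\in\mathbb{S}^1$ one has $|a-rb|^2=(1-r)^2+r|a-b|^2\ge r|a-b|^2$, and Mori's theorem supplies the integrable majorant.

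\textbf{What your route buys.}
\begin{itemize}
\item It bypasses the approximation lemma, the single-layer potential and the inverse/symplectic relations altogether.
\item It yields boundedness and self-adjointness of $\OO_\vp$ for free.
\item On the diagonal entries $(k,-k)$ the $n$-series has nonnegative terms $\frac{r^n}{n}(|\widehat{\vp^n}(k)|^2+|\widehat{\vp^{-n}}(k)|^2)$. Monotone convergence therefore shows that the rows of $X$ and $Y$ are square-summable as soon as $\log|\vp(z)-\vp(w)|\in L^1$. Consequently the entrywise identity holds for every homeomorphism whose Grunsky coefficients are defined, not only on $\QS(\mathbb{S}^1)$.
\end{itemize}

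\textbf{What the paper's route buys.} It produces the intermediate identification of $\OO_\vp$ with $\frac12({\bf T}-{\bf C}_\vp{\bf T}{\bf C}_\vp^{-1})$ and the energy identity \eqref{eq:energy-identity}. These make the potential-theoretic meaning of $\OO_\vp$ visible, and the paper uses them for its Grunsky-type inequalities. As the paper itself remarks, those inequalities could also be derived from the theorem together with Lemma~\ref{lem:quasiinvariance}.
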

This theorem is proved in Section~\ref{sect:proofs}.
From this, we easily obtain the composition rule for $\OO_\vp$:

\begin{corollary}[See Corollary~\ref{cor:OO_two_vp}]\label{cor:intro_OO_two_vp}
    If $\vp_1, \vp_2 \in \QS(\mathbb{S}^1)$, we have \[\OO_{\varphi_1 \circ \varphi_2} =  {\bf C}_{\vp_2}{\OO}_{\vp_1}{\bf C}_{\vp_2}^* + {\OO}_{\vp_2}.\] In particular, if $\mu \in \mob(\m S^1)$, then $\OO_{\varphi\circ \mu}=  {\bf C}_\mu \OO_\varphi  {\bf C}_\mu^*$ is a conjugation of $\OO_\vp$ by the unitary operator ${\bf C}_\mu$. 
\end{corollary}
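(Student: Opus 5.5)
We derive the composition rule directly from Theorem~\ref{thm:2OO} together with the functoriality of the normalized composition operator. The first step is to check that ${\bf C}_{\vp_1\circ\vp_2} = {\bf C}_{\vp_2}{\bf C}_{\vp_1}$ as operators on $\dot H^{1/2}$.

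For $u \in \dot H^{1/2}$ we have $u\circ(\vp_1\circ\vp_2) = (u\circ\vp_1)\circ\vp_2$. The operator ${\bf C}_{\vp_1}$ subtracts a constant $c$ from $u \circ \vp_1$. Composing with $\vp_2$ then gives $u\circ\vp_1\circ\vp_2 - c$, and ${\bf C}_{\vp_2}$ subtracts the mean of this function. Since subtracting a constant does not affect the result after mean-normalization, we get ${\bf C}_{\vp_2}{\bf C}_{\vp_1}u = {\bf C}_{\vp_1\circ\vp_2}u$. Boundedness of each factor is Lemma~\ref{lem:quasiinvariance}, and $\vp_1\circ\vp_2\in\QS(\mathbb{S}^1)$ because $\QS$ is a group. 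Note the order reversal: the composition acts contravariantly.

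Next we apply Theorem~\ref{thm:2OO} three times and compute:
\begin{align*}
2\OO_{\vp_1\circ\vp_2} &= {\bf I} - {\bf C}_{\vp_2}{\bf C}_{\vp_1}{\bf C}_{\vp_1}^*{\bf C}_{\vp_2}^* \\
&= {\bf I} - {\bf C}_{\vp_2}{\bf C}_{\vp_2}^* + {\bf C}_{\vp_2}({\bf I} - {\bf C}_{\vp_1}{\bf C}_{\vp_1}^*){\bf C}_{\vp_2}^* \\
&= 2\OO_{\vp_2} + 2{\bf C}_{\vp_2}\OO_{\vp_1}{\bf C}_{\vp_2}^*.
\end{align*}
Dividing by $2$ gives the rule. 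The only care needed is that the adjoint is taken in the same Hilbert structure used in Theorem~\ref{thm:2OO}, namely $\ell^2_{\mathbb{Z}^*}$ via the orthonormal basis \eqref{eq:basis}. Then $({\bf C}_{\vp_2}{\bf C}_{\vp_1})^* = {\bf C}_{\vp_1}^*{\bf C}_{\vp_2}^*$ holds for bounded operators.

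For the special case, take $\vp_2=\mu\in\mob(\mathbb{S}^1)$. We have $\OO_\mu=0$, either by the first bullet of Theorem~\ref{thm:main-Q-with-Schatten} or directly, since for a Möbius map of the circle $|\mu(z)-\mu(w)|^2 = |\mu'(z)||\mu'(w)||z-w|^2$. The logarithmic difference quotient is therefore a sum of a function of $z$ and a function of $w$, so it has no mixed Fourier coefficients with $k,\ell\neq0$. Theorem~\ref{thm:2OO} then gives ${\bf C}_\mu{\bf C}_\mu^*={\bf I}$.

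Applying the same argument to $\mu^{-1}$ and using ${\bf C}_{\mu^{-1}}={\bf C}_\mu^{-1}$ (from the first step) shows ${\bf C}_\mu$ is invertible with ${\bf C}_\mu^*={\bf C}_\mu^{-1}$, so ${\bf C}_\mu$ is unitary. Hence $\OO_{\vp\circ\mu}={\bf C}_\mu\OO_\vp{\bf C}_\mu^*$ is a unitary conjugation of $\OO_\vp$.

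The main obstacle is bookkeeping rather than analysis: getting the order ${\bf C}_{\vp_1\circ\vp_2}={\bf C}_{\vp_2}{\bf C}_{\vp_1}$ right, and confirming that adjoints are taken in $\ell^2$ consistently with the block form \eqref{eq:composition-matrix1}.
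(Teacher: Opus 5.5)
Your proposal is correct and is essentially the paper's proof. Like the paper, you substitute ${\bf C}_{\vp_1\circ\vp_2}={\bf C}_{\vp_2}{\bf C}_{\vp_1}$ into Theorem~\ref{thm:2OO} and add and subtract ${\bf C}_{\vp_2}{\bf C}_{\vp_2}^*$; the only minor difference is that you get unitarity of ${\bf C}_\mu$ from $\OO_\mu=0$, Theorem~\ref{thm:2OO} and invertibility via ${\bf C}_{\mu^{-1}}$, whereas the paper reads it off in Lemma~\ref{lem:mob} from the block-diagonal form of ${\bf C}_\mu$ (that is, $Y=0$) together with $XX^*-YY^*=I$.
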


\subsection*{Dirichlet energy}
We now ask for an expression similar to the Dirichlet integral formula \eqref{def:loewner energy dirichlet} that involves only the welding homeomorphism.  
We prove the following result. We write $u_\pm$ for the projections onto the subspaces of positive/negative modes $H_\pm \subset \dot{H}^{1/2}$, and we discuss the correspondence between the operators in $\ell^2$ and (subspaces of) $H^{1/2}$ in Section~\ref{sect: Fourier and H12} below.

\begin{thm}\label{thm:DE}
    Let $\vp \in \WP(\mathbb{S}^1)$ and write $\psi = \varphi^{-1}$. Consider the block matrix representation of ${\bf C}_\vp$ as in \eqref{eq:composition-matrix1}. 
    Then,
    \[
    I^L(\vp) =  \| (X^*)^{-1}(\log \psi')_+\|_{H_+}^2 + \|  (\overline{X})^{-1}(\log \vp')_-\|_{H_-}^2 +   4 \log |(X^{-1})_{1,1}|.
    \]
         Moreover, the right-hand side coincides with the right-hand side of \eqref{def:loewner energy dirichlet} term by term in the same order.
\end{thm}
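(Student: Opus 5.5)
The strategy is to prove the stronger, term-by-term claim: identify each of the three summands with the corresponding summand of \eqref{def:loewner energy dirichlet}. Let $f,g$ be the normalized welding maps, so that $\vp = g^{-1}\circ f$ on $\m S^1$. The boundary values of $\log f'$ lie in $H_+\oplus\m C$. We have
\[
\mathcal D_{\m D}(\log|f'|)=\|(\log f')_+\|_{H_+}^2
\]
in the normalization of \eqref{eq:basis}, because the Dirichlet integral of the real part of a holomorphic function equals its $\dot H^{1/2}$ norm. Similarly, writing $G=\log g'$, which is holomorphic on $\m D^*$ with $G(\infty)=\log g'(\infty)$,
\[
\mathcal D_{\m D^*}(\log|g'|)=\|G_-\|_{H_-}^2 .
\]
The task is therefore to express $(\log f')_+$, $G_-$, and $\log|f'(0)/g'(\infty)|$ through $\vp$ and $X$.

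The key step is to differentiate the welding identity $f=g\circ\vp$ on the circle. Writing $z=\ee^{\ii t}$ and using derivatives in $t$,
\[
\log(zf'(z)) = \log(\vp g'(\vp)) + \log(\text{tangential derivative of }\vp),
\]
which gives
\[
\log f' = G\circ\vp + \log\vp' + \log(\vp/z) ,
\]
up to interpreting $\vp'$ as $\dd\vp/\dd z$ (these log-derivative conventions are exactly what the theorem's $\log\vp'$ encodes). I would then apply the projection onto $H_-$. Since $\log f'$ has no negative modes, this yields
\[
0=({\bf C}_\vp G)_- + (\log \vp')_- .
\]
Here $G$ has only negative modes plus a constant, so $({\bf C}_\vp G)_- = \overline{X}\, G_-$ by the block form \eqref{eq:composition-matrix1}. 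Invertibility of $X$ (Lemma~\ref{lem:composition-grunsky-closed}) then gives $G_- = -(\overline X)^{-1}(\log\vp')_-$, and hence the second term. By the symmetric argument with $\psi=\vp^{-1}$ and $g=f\circ\psi$, using ${\bf C}_{\psi}$ from \eqref{eq:composition-matrix1} whose $(+,+)$ block is $X^*$, I get $(\log f')_+=-(X^*)^{-1}(\log\psi')_+$, which gives the first term. One must check that the mean-zero normalization in ${\bf C}$ only affects constants, which the projections kill, and that $\log\vp'\in H^{1/2}$ because $\vp\in\WP$.

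For the last term, compare Fourier modes directly. The identity $f=g\circ\vp$ gives $\sum_{n\ge1} a_n z^n = \sum_{m\le 1} b_m\vp^m$, with $a_1=f'(0)$ and $b_1=g'(\infty)$. Taking the $z^k$ coefficient for $k\ge1$ and moving the $m\le0$ terms of $g$ to the left, the vector $(a_k\sqrt k)$ relates to $b_1$ via the first column of $X$, plus contributions from $Y$ and from the negative coefficients of $g$. A cleaner route, which I would try first, is to use the $H_+$-projection of the identity $g=f\circ\psi$ on the function $w$ itself. Expanding $w=\sum_k c_k f^k\circ\ldots$ is awkward, so instead consider the inverse map $g^{-1}\circ f=\vp$ and its power expansion. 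The entry $(X^{-1})_{1,1}$ should equal $g'(\infty)/f'(0)$, up to a unimodular factor, via a Faber-polynomial identity: $X$ is the Faber/Grunsky-type matrix $\sqrt{k/\ell}\,[\vp^\ell]_k$, and the triangular structure of Faber polynomials relative to $f$ gives its inverse's $(1,1)$ entry as the leading-coefficient ratio. This step, pinning down $(X^{-1})_{1,1}$ with the correct modulus, is the main obstacle. I would handle it using the factorization ${\bf C}_\vp={\bf C}$-decomposition from \cite{TT06}, namely the relation between ${\bf C}_\vp$ and the classical Grunsky operators (Theorem~\ref{thm:relation between Grunsky and composition}), where $X$ is expressed as a product of Faber-type triangular operators with diagonals $(f'(0)/g'(\infty))^{k}$-like entries.

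Summing the three identifications and invoking \eqref{def:loewner energy dirichlet} proves the theorem.
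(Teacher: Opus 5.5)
Your treatment of the two Dirichlet terms is correct and is the paper's own argument (Lemma~\ref{lemma:dirichlet-translation}). You differentiate ${\bf U}_\vp g=f$ and project onto $H_-$ to get $\overline X(\log g')_-=-(\log\vp')_-$. You then do the same with ${\bf U}_\psi f=g$ using the block $X_\psi=X^*$, and finally use $\mathcal D_{\m D}(\log|f'|)=2\|\log|f'|\|^2_{\dot H^{1/2}}=\|(\log f')_+\|^2_{H_+}$. One cosmetic point: with $\vp'=\dd\vp/\dd z$ the chain rule is simply $\log f'=(\log g')\circ\vp+\log\vp'$. The extra $\log(\vp/z)$ appears only if one uses the angular derivative $|\vp'|$; your projected equation is the right one.

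The third term is a genuine gap. You give no argument for it, and the value you conjecture is the reciprocal of the correct one.
\begin{itemize}
\item Since $X^*X=I+Y^*Y\ge I$ and $X$ is invertible, $X^{-1}$ is a contraction, so $|(X^{-1})_{1,1}|\le 1$. Classically, however, $|g'(\infty)|\ge |f'(0)|$, so your value has modulus at least $1$.
\item Concretely, take a round disk $D(c,r)\ni 0$ normalized by $f'(0)=1$. Then $g(z)=rz+c$ and $\vp(z)=(z+a)/(1+\bar a z)$ with $a=-c/r$ and $1-|a|^2=1/r$. Here $Y=0$, $X$ is unitary, and $(X^{-1})_{1,1}=\overline{x_{1,1}}=1/r=f'(0)/g'(\infty)$, not $r$.
\item With your identification the last term enters with the wrong sign. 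In this example the sum becomes $-8\log(1-|a|^2)\neq 0=I^L$.
\end{itemize}
Also, Theorem~\ref{thm:relation between Grunsky and composition} does not present $X$ as a product of triangular Faber operators. What it gives is $X^{-1}=\overline{B_3}$, hence $(X^{-1})_{1,1}=\overline{a_{1,-1}}$. Expanding $\log\frac{g(z)-f(w)}{z}$ to first order in $w$ and $1/z$ yields $a_{1,-1}=f'(0)/g'(\infty)$. That is one valid way to close the gap.

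The route you abandoned is in fact the paper's proof (Lemma~\ref{lemma:VK}), and it needs no expansion of $w$ in powers of $f$: apply ${\bf C}_\psi$ to $f$ itself.
\begin{itemize}
\item Because $f(0)=0$, $f$ lies entirely in $H_+$, with canonical coordinates $f_+=(1,\sqrt2 a_2,\dots)$.
\item Meanwhile ${\bf R}_0 g=\beta z+g_-$ with $\beta=g'(\infty)$ and $g_-\in H_-$.
\item So only the block $X_\psi=X^*$ acts, and the $H_+$-component of ${\bf C}_\psi f={\bf R}_0 g$ reads $X^*f_+=\beta e_1$.
\item Hence $f_+=\beta(X^*)^{-1}e_1$, and the first coordinate gives $1=\beta\,\overline{(X^{-1})_{1,1}}$, i.e.\ $|(X^{-1})_{1,1}|=|f'(0)/g'(\infty)|$.
\end{itemize}
Your first attempt, working in the $\vp$-direction with $f=g\circ\vp$, is messier precisely because $g$ has modes on both sides, so $Y$ enters.
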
   
Suppose $\vp \in \WP(\mathbb{S}^1)$, and consider $\vp^{\circ n}$, the $n$-fold composition of $\vp$. This is again an element of $\WP(\mathbb{S}^1)$. As in \cite{MesikeppYang2025_LoewnerEnergy}, in particular Theorem~1.2, one may ask about the growth of the energy under composition.

\begin{corollary} \label{cor:growth}
Suppose $\vp \in \WP(\mathbb{S}^1)$ extends to a $K$-quasiconformal homeomorphism of $\mathbb{D}$. Then, if $\vp^{\circ n}$ denotes the $n$-fold composition of $\vp$,
     \[
    I^L(\vp^{\circ n}) \le \left(\frac{K^{n/2}-1}{K^{1/2}-1}\right)^2\left( \| \log \psi'\|_{\dot H^{1/2}}^2 +  \| \log \vp'\|_{\dot H^{1/2}}^2 \right).
    \]
 
     In particular,
    \[
    \limsup_{n\to \infty} \frac{1}{n}\log I^L(\vp^{\circ n}) \le \log K.
    \]
\end{corollary}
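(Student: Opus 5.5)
Our plan is to combine the Dirichlet energy formula of Theorem~\ref{thm:DE} with the operator bounds coming from Proposition~\ref{prop:qs_kappa_bound} and Theorem~\ref{thm:2OO}. The first step is to bound the operator norms that enter Theorem~\ref{thm:DE}. By Theorem~\ref{thm:2OO},
\[
{\bf C}_\vp{\bf C}_\vp^* = {\bf I} - 2\OO_\vp .
\]
Hence \eqref{estimate:grunsky1} yields
\[
\tfrac1K\,{\bf I} \le {\bf C}_\vp {\bf C}_\vp^* \le K\,{\bf I}.
\]
Therefore $\|{\bf C}_\vp\| \le K^{1/2}$. Applying the same argument to $\vp^{-1}$, which also has a $K$-quasiconformal extension, gives $\|{\bf C}_\vp^{-1}\| = \|{\bf C}_{\vp^{-1}}\| \le K^{1/2}$. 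Since $\vp^{\circ n}$ has a $K^n$-quasiconformal extension, we also get $\|(X_n^*)^{-1}\|,\|\overline{X_n}^{-1}\| \le K^{n/2}$, where $X_n$ is the $X$-block of ${\bf C}_{\vp^{\circ n}}$. For this we use that $X$ is the compression of ${\bf C}_\vp$ to $H_+$ and that $XX^* = I + YY^* \ge I$ from Proposition~\ref{prop:IL_X_Y-intro}. The intended conclusion is $\|X^{-1}\| \le 1$, which we will verify directly from this identity.

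The second step handles the logarithmic term $4\log|(X_n^{-1})_{1,1}|$. Using $XX^*\ge I$, we have $\|X_n^{-1}\|\le 1$, so $|(X_n^{-1})_{1,1}|\le 1$ and the term is nonpositive; we drop it. Likewise, the prefactors $(X_n^*)^{-1}$ and $\overline{X_n}^{-1}$ have norm at most $1$. So Theorem~\ref{thm:DE} gives
\[
I^L(\vp^{\circ n}) \le \|\log(\psi^{\circ n})'\|_{\dot H^{1/2}}^2 + \|\log(\vp^{\circ n})'\|_{\dot H^{1/2}}^2 .
\]

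The third step uses the cocycle identity
\[
\log(\vp^{\circ n})' = \sum_{j=0}^{n-1} (\log\vp')\circ \vp^{\circ j} .
\]
Its $\dot H^{1/2}$ seminorm is at most
\[
\sum_{j=0}^{n-1} \|{\bf C}_{\vp^{\circ j}}\|\,\|\log \vp'\|_{\dot H^{1/2}}, \qquad \|{\bf C}_{\vp^{\circ j}}\| \le \|{\bf C}_\vp\|^j \le K^{j/2}.
\]
This gives the geometric sum $(K^{n/2}-1)/(K^{1/2}-1)$, and squaring produces the stated constant. The same bound holds for $\psi$, and adding the two gives the claimed inequality. The limsup statement then follows by taking logarithms, dividing by $n$, and letting $n\to\infty$.

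The main obstacle is the first step: justifying that the operator prefactors in Theorem~\ref{thm:DE} have norm at most $1$ (or at worst a constant independent of $n$) and that the determinant term is nonpositive. If $XX^*\ge I$ only gave the bound for $X^*$ and not for its inverse in the needed form, I would instead identify each term in Theorem~\ref{thm:DE} with $\mathcal D_{\m D}(\log|f_n'|)$ and $\mathcal D_{\m D^*}(\log|g_n'|)$ term by term, as the theorem asserts. I would then bound these directly by the harmonic extensions of $(\log \psi')_+$ and $(\log\vp')_-$, using $\|X^{-1}\|\le 1$.
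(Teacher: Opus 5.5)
Your argument is correct and is essentially the paper's proof. You apply Theorem~\ref{thm:DE} to $\vp^{\circ n}$, drop the nonpositive term $4\log|(X_n^{-1})_{1,1}|$, use that $(X_n^*)^{-1}$ and $\overline{X_n}^{-1}$ are contractions, and bound $\log(\vp^{\circ n})'$ via the cocycle identity and $\|{\bf C}_\vp^j\|\le K^{j/2}$; the only difference is that you derive the contraction property from $X^*X=I+Y^*Y$ and $XX^*=I+YY^*$, whereas the paper identifies $(X^*)^{-1}=B_2$, $\overline{X}^{-1}=B_3$ and cites the Grunsky contraction (Lemma~\ref{lem:Grunsky_eq}, Lemma~\ref{lemma:VK}). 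Your detour through Theorem~\ref{thm:2OO} to get $\|{\bf C}_\vp\|\le K^{1/2}$ is unnecessary because Lemma~\ref{lem:quasiinvariance} gives it directly, and the intermediate $K^{n/2}$ bound on $X_n^{-1}$ is likewise unnecessary.
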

The theorem and the corollary are both proved in Section~\ref{sect:dirichlet}. 

\subsection{Discussion}
Several interesting classes of Jordan curves, including the Weil--Petersson curves, can be described in terms of $\log |\vp'|$; in addition to the ones discussed in this paper, see also Table~1 in \cite{Bishop2022WP}. However, (unlike $g'$ in $\mathbb{D}^*$), $\vp'$ is not always defined, even for general $\vp \in \QS$, and cannot be used to study the geometry of the welded curve. The approach of the present paper is to replace $\log|\vp'|$ by an operator encoding the log difference ratio $\log (\left|\vp(z)-\vp(w)\right| / \left| z- w \right|)$, the latter which is defined a.e.\ in $\mathbb{S}^1 \times \mathbb{S}^1$ for any homeomorphism. The matrix $\OO_\vp$ is also well-defined under very mild conditions, e.g., assuming that $\vp, \vp^{-1}$ are H\"older continuous.

The remarkable fact is that natural spectral properties of the operator $\OO_\vp$ are in exact correspondence with natural classes of homeomorphisms and welded curves. Moreover, in the case we are primarily interested in here, the correspondence is quantitative in the sense that $I^L$ can be expressed cleanly as a determinant involving $\OO_\vp$. This interesting phenomenon can also be seen in the context of the classical Grunsky operator \cite{TT06,HS12,Jones1999Grunsky}, and the arc-Grunsky operator of \cite{CJV}.

By Theorem~\ref{thm:main-Q-with-Schatten} and Theorem~\ref{thm:main-det2}, we see that one possible way to generalize the definition of Loewner energy to Jordan curves outside the Weil--Petersson class is to consider the appropriate regularized Fredholm determinant $-12\log \det \nolimits_n ({\bf I} - \OO_\vp)^{-1}$. This works for all $p$-Weil--Petersson curves when $n \ge p$, since the corresponding $\OO_\vp$ is in the $p$-Schatten class. Moreover, this gives a conformally invariant quantity, as can be seen from the composition rule for $\OO_\vp$ and the invariance of $\det \nolimits_n$ by unitary conjugations. (However, quasicircles with corners have non-compact $\OO_\vp$, which hence lie in no Schatten class. See the discussion in Section~1.2 of \cite{johansson_viklund} for more on attempting to define a renormalized Loewner energy in the presence of corners motivated by Coulomb gas asymptotics.) 

It would be very interesting to study and characterize $\OO_\vp$ for the rough random weldings appearing in Liouville quantum gravity and SLE theory. In this case, heuristically, $\log |\vp'|$ is a log-correlated field on the circle, see \cite{AJKS, Quantum_zipper} and also \cite{VW1}. Such weldings are not quasisymmetric but typically do satisfy the H\"older conditions. Moreover, we expect that  $\OO_\vp$ is unbounded whenever it is defined and $\vp \not \in \QS$. (One could try to prove a result in this direction under suitable assumptions, starting from \eqref{eq:energy-identity}.)

\subsection*{Acknowledgments} S.F. was funded by Beijing Natural Science Foundation (JQ20001) and Tsinghua scholarship for overseas graduate studies (2023076). F.V.\ was supported by the Knut and Alice Wallenberg Foundation and the G\"oran Gustafsson Foundation.  Y.W acknowledges support from the European Union (ERC, RaConTeich, 101116694)\footnote{Views and opinions expressed are however those of the author(s) only and do not necessarily reflect those of the European Union or the European Research Council Executive Agency. Neither the European Union nor the granting authority can be held responsible for them.} and the Swiss State Secretariat for Education, Research and Innovation (SERI): MB25.00004.  This work was supported by grants from the Simons Foundation International [SFI-MPS-PP-00012621-11, F.V.] and [SFI-MPS-PP-00012621-19, Y.W.]. 

We thank Leon Takhtajan for comments on an earlier version of the paper.
\section{Preliminaries}\label{sect:prel}

\subsection{Classes of circle homeomorphisms and Loewner energy}\label{sect:classes_homeo}
Let us first recall the definition of a few classes of circle homeomorphisms that will be used. We always assume that circle homeomorphisms are orientation-preserving.

    We say that a circle homeomorphism $\varphi$ is \emph{quasisymmetric} if there exists $C < \infty$ such that 
    \begin{equation}\label{def:quasi symmetric}
        \frac{1}{C} \leq \left|\frac{\varphi(\ee^{\ii  (s+ t )})-\varphi(\ee^{\ii  s })}{\varphi(\ee^{\ii  s })-\varphi(\ee^{\ii  (s- t )})}\right| \leq C
    \end{equation}
    for all $s \in \mathbb [0,2\pi]$ and $t \in (0,2\pi)$. We write $\QS = \QS(\mathbb{S}^1)$ for the group of quasisymmetric circle homeomorphisms. This class corresponds to quasicircles via conformal welding (see Definition~\ref{df:conformal_welding}).
    
    A homeomorphism $\vp \in \QS(\mathbb{S}^1)$ if and only if it can be extended to a $K$-quasiconformal homeomorphism %$f$
    $\omega$
    of $\m D$ for some $K \ge 1$, 
    that is, $\o$ satisfies a.e.\ in $\mathbb{D}$, 
    \begin{align}\label{f dilation0}    
\frac{|\o_z| +|\o_{\bar{z}}|}{|\o_z| - |\o_{\bar{z}}|} \le K,
\end{align}
where $\o_z = \partial_z \o$ and $\o_{\bar z} = \partial_{\bar z} \o$ are the Wirtinger derivatives of $\o$.
    The smallest such $K$ is called the distortion of $\o$. 
    We emphasize that this $K$ is not the same as the constant $C$ in \eqref{def:quasi symmetric} although they are quantitatively related.

We say that $\vp$ is \emph{symmetric} (or $\vp \in \Sym(\mathbb{S}^1)$) if $\vp \in \QS(\mathbb{S}^1)$ and
\[\lim_{t\to 0}\sup_{s \in [0,2\pi]}\left|\frac{\varphi(\ee^{\ii  (s+ t )})-\varphi(\ee^{\ii  s })}{\varphi(\ee^{\ii  s })-\varphi(\ee^{\ii  (s- t )})}\right| = 1. \]
This class corresponds to the class of asymptotically conformal quasicircles, see \cite{shen2007}.

We say that $\vp$ is \emph{Weil--Petersson} (or $\vp \in \WP(\mathbb{S}^1)$) if $\vp$ is absolutely continuous and $\log |\vp'| \in H^{1/2}$. More generally, for $p > 1$, $\vp \in \WP_p(\mathbb{S}^1)$ if $\log \vp' \in B_p(\mathbb{S}^1)$, where the Besov spaces $B_p$ are defined below. The $\WP$ class corresponds to the finite energy Weil--Petersson curves, and the $\WP_p$ classes correspond to the $p$-integrable Teichmüller spaces ($p$-Weil--Petersson curves),  see \cite{WeiMatsuzaki2023} and the references therein.

We let $\Diff^\infty(\mathbb{S}^1)$ denote the group of $C^\infty$-smooth circle diffeomorphisms, and $\mob (\mathbb{S}^1)$ the group of M\"obius transformations preserving $\mathbb{S}^1$.
Among these classes, we have the following strict inclusions:
$$\mob(\mathbb{S}^1) \subset \Diff^\infty (\mathbb{S}^1) \subset \WP_{1<p<2} (\mathbb{S}^1) \subset \WP(\mathbb{S}^1)  \subset \WP_{2<p} (\mathbb{S}^1) \subset \Sym(\mathbb{S}^1) \subset \QS (\mathbb{S}^1).$$
Also note that if $r < s$, then $\WP_r(\mathbb{S}^1) \subset \WP_s(\mathbb{S}^1)$.

We can identify (classes of) quasisymmetric circle homeomorphisms with (classes of) quasicircles via conformal welding.
\begin{definition}[Conformal welding] \label{df:conformal_welding}
    Let $\vp \in \QS(\mathbb{S}^1)$. A solution to the \emph{conformal welding problem} for $\vp$ consists of an oriented Jordan curve $\gamma$ and a pair of conformal maps $(f,g)$ such that $f: \mathbb{D} \to D$  and $g: \mathbb{D}^* \to D^*$ and such that $\vp = g^{-1} \circ f \mid_{\mathbb{S}^1}$. Here, $D$ and $D^*$ are respectively the components of $\Chat \smallsetminus \gamma$ on the left and on the right of $\g$. 
\end{definition}

A solution $(\gamma; f,g)$ is not unique: if $\mu \in \operatorname{PSL}(2, \mathbb{C})$, then $(\mu \circ \gamma; \mu \circ f,\mu \circ g)$ is also a solution. We may normalize to require $f(0) = 0, f'(0)=1$ and $g(\infty) = \infty$, and in this case we have uniqueness of the normalized solution (and this normalization also implies that $D^*$ contains $\infty$ while $D$ is the bounded connected component) when $\varphi \in \QS (\mathbb{S}^1)$.  The welded Jordan curve $\g$ is a \emph{quasicircle}.

\begin{thm}[See \cite{TT06,shen13,W2}]
    A circle homeomorphism $\varphi \in \WP (\mathbb{S}^1)$ if and only if it has finite \emph{Loewner energy} (also called the \emph{universal Liouville action}):
    \[
I^L(\varphi) \coloneqq I^L(\g) \coloneqq \mathcal{D}_\mathbb{D}(\log |f'|) + \mathcal{D}_\mathbb{D^*}(\log |g'|) + 4 \log |f'(0)|/|g'(\infty)| \in [0,\infty)
\]
where $\mathcal{D}_\O(u) = \frac{1}{\pi}\int_\O |\nabla u|^2 d^2z$ is the Dirichlet energy of $u$ on a domain $\O$, and $(\g; f,g)$ is a normalized solution to the conformal welding problem for $\varphi$.
    Moreover, $I^L$ is a K\"ahler potential on the Weil--Petersson Teichm\"uller space $ \mob(\mathbb{S}^1) \backslash \WP(\mathbb{S}^1)$.
\end{thm}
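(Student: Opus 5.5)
The statement just given is the background theorem attributed to \cite{TT06,shen13,W2}. It has two parts: the equivalence ``$\vp\in\WP(\mathbb{S}^1)$ if and only if $I^L(\vp)<\infty$'', and the K\"ahler potential property. I would not reprove the Kähler potential property, which is the main theorem of \cite{TT06}. I would only assemble the equivalence from known characterizations, working throughout with the normalized welding solution $(\gamma;f,g)$, i.e.\ $f(0)=0$, $f'(0)=1$, $g(\infty)=\infty$.

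\emph{Reduction to a curve statement.} The term $4\log|f'(0)|/|g'(\infty)|$ is finite for any quasicircle separating $0$ and $\infty$, since $f'(0)\neq 0$ and $g'(\infty)\neq 0$. Hence $I^L(\gamma)<\infty$ exactly when both Dirichlet integrals $\mathcal D_{\mathbb D}(\log|f'|)$ and $\mathcal D_{\mathbb D^*}(\log|g'|)$ are finite. Each condition says that $\log f'$, respectively $\log g'$, lies in the Dirichlet space, and \cite{TT06} shows the two are equivalent for quasicircles. This is the defining property of a Weil--Petersson quasicircle.

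\emph{Transfer to the welding.} It then remains to show that $\gamma$ is a Weil--Petersson quasicircle if and only if $\vp=g^{-1}\circ f|_{\mathbb S^1}$ is absolutely continuous with $\log|\vp'|\in H^{1/2}$. This is Shen's theorem \cite{shen13}, and I expect it to be the main obstacle. The direction from curve to welding goes as follows. On $\mathbb S^1$ we have $\log \vp' = \log f'-\log g'\circ\vp$ up to the harmless rotation term. The first term is the boundary trace of a Dirichlet-space function, so it lies in $H^{1/2}$. The second term lies in $H^{1/2}$ because the composition operator ${\bf U}_\vp$ is bounded on $H^{1/2}$ for quasisymmetric $\vp$ (Lemma~\ref{lem:quasiinvariance}).

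The converse is the hard step. From $\log|\vp'|\in H^{1/2}$ one must reconstruct $f$ and $g$ with finite Dirichlet energy. The route I would take is Shen's: build a quasiconformal extension of $\vp$ whose Beltrami coefficient is square-integrable with respect to the hyperbolic metric, and then use the Takhtajan--Teo characterization that this condition is equivalent to $T_0(1)$ membership. Alternatively, one can appeal to Corollary~\ref{cor:Grunsky-ineqs-general-version} together with Hilbert--Schmidt Grunsky criteria. In either case this step is only cited, not reproved.

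Finally, the identification of the right-hand side with the energy from \cite{RW} is \cite{W2}, which completes the statement.
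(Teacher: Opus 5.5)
Your proposal matches the paper, which gives no proof of this background theorem and simply attributes it to \cite{TT06,shen13,W2}, with Shen's theorem supplying the equivalence $\vp\in\WP(\mathbb{S}^1)\Leftrightarrow I^L(\vp)<\infty$; assembling it from these citations is exactly what you do. Two small points: your curve-to-welding sketch also needs $\vp$ to be absolutely continuous, which follows from rectifiability of Weil--Petersson quasicircles and the F.\ and M.\ Riesz theorem on both sides; and the suggested alternative via Corollary~\ref{cor:Grunsky-ineqs-general-version} is not a real route, since that corollary only gives operator-norm bounds in terms of $K$, while the Hilbert--Schmidt criteria it would be paired with (Theorem~\ref{thm:comp_compact_HS}) are stated for $\WP(\mathbb{S}^1)$ defined via $\log|\vp'|\in H^{1/2}$ and so already presuppose Shen's theorem.
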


\begin{rem}
    The Loewner energy is in fact well-defined for all Jordan curves (but only finite if $\gamma$ is Weil--Petersson) and satisfies $I^L(\gamma) = I^L(\mu \circ \gamma)$. Moreover, it is independent of the orientation of the curve. Therefore, without loss of generality, we only consider curves arising from a normalized solution to the welding problem and view $I^L$ as a function of $\vp$. This also implies that $I^L (\varphi) = I^L (A \circ \varphi \circ B)$ for all $A,B \in \mob(\mathbb{S}^1)$ and that 
$I^L(\varphi) = I^L(\varphi^{-1}).$ 
These symmetries are also consequences of the expressions for $I^L$ that we prove ere (see Corollary~\ref{cor:energy_inversion} and comments after Theorem~\ref{thm:main-det2}).
\end{rem}

\subsection{Fourier series and $H^{1/2}$}\label{sect: Fourier and H12}
We will work with the Hilbert spaces
\[
H^{1/2}=H^{1/2}_\mathbb{C}(\mathbb{S}^1)=\left\{u: \mathbb{S}^1 \to \mathbb{C} : u=u_0+\sum_{k=1}^\infty \Big(u_k\frac{\ee^{\ii k\theta}}{\sqrt{|k|}}+u_{-k}\frac{\ee^{-\ii k\theta}}{\sqrt{|k|}}\Big), \quad  \sum_k|u_k|^2 < \infty \right\}
\]
and
\[
\dot H^{1/2}=\{u \in H^{1/2}: u_0=0\} \simeq H^{1/2}/\m C
\]
each equipped with the homogeneous Sobolev norm:
\[
\langle f,g \rangle_{\dot H^{1/2}} = \sum_{k\in \mathbb{Z}^*} f_k\overline{g_{k}} = \sum_{k=-\infty}^\infty |k| \widehat{f}_k\overline{\widehat{g}_{k}} , \qquad \|f\|_{\dot H^{1/2}}^2 = \langle f,f \rangle,
\]
where $\widehat{f}_k = f_k/\sqrt {|k|},\, \widehat{g}_k = g_k /\sqrt{|k|}$ are the Fourier coefficients. We will refer to the family $(\ee^{\ii k \t}/\sqrt{|k|})_{k \in \m Z^*}$ as the \emph{canonical basis} of $\dot H^{1/2}$.

More generally, we write for $s \in \m R$, $$\dot H^{s} = \Bigg\{f \colon \|f\|_{\dot H^{s}}^2 =  \sum_{k \in \m Z^*} |k|^{2s} |\widehat{f}_k|^2 < \infty, \quad \widehat{f}_0=0 \Bigg\}.$$

We will write $H^{1/2}_\mathbb{R}$ for real-valued functions in $H^{1/2}$.  Note that if $f=u+ \ii v$, with $u,v \in H^{1/2}_\mathbb{R}$, then $\|f\|^2_{\dot H^{1/2}} = \|u\|^2_{\dot H^{1/2}}+\|v\|^2_{\dot H^{1/2}}$.
Let ${\bf R}_0: H^{1/2} \to \dot H^{1/2}$ be the natural projection, that is,
\[
{\bf R}_0 u = u - \widehat{u}_0.
\]
Let $H_+$ and $H_-$ be the (closed) subspaces of $\dot H^{1/2}$ of functions with only positive and negative modes, respectively.

Let $p > 1$. We will also need the (homogeneous) Besov space 
    \[
   \dot{B}_p= \Bigg\{f \colon \|f\|_{\dot{B}_p}^2 = \sum_{k \in \mathbb{Z}^*} |k|^{p-1}|\widehat{f}_k|^p < \infty, \quad \widehat{f}_0=0 \Bigg\}.
    \]
    Note that $\dot B_2 = \dot H^{1/2}$.

The \emph{Hilbert transform} ${\bf J}:\dot H^{1/2} \to \dot H^{1/2}$ is defined by
\begin{equation}
    \label{eq:Hilbert}
u_k \mapsto -\ii \, \textrm{sgn}(k)\,u_k.
\end{equation}

In the canonical basis for $\dot H^{1/2}$ we have the following matrix representation \[ 
{\bf J}=\ii \begin{pmatrix}
-I & 0\\
0 & I
\end{pmatrix}.
\]
Note that 
\[
\Phi: (u_k)_{\k \ge 1} \mapsto \sum_{k \ge 1}u_k \frac{\ee^{\ii kt}}{\sqrt{k}}
\]
gives a linear isometry of $\ell^2(\mathbb{C})$ onto $H_+$. Hence, a bounded operator $F$ on $\ell^2$ can be transplanted to a bounded operator on $H_+$ by setting $F \mapsto  \Phi F \Phi^{-1}$. This applies in particular to $X$ as in \eqref{eq:composition-matrix1} and similarly to $Y$, giving a bounded operator $H_- \to H_+$. We will use the same notation for the transplanted operators where no confusion can arise.

Let $u = \sum_{k \in \m Z}\widehat{u}_k \ee^{\ii kt}  \in H^{1/2}_{\m R}$. We write \[P[u](z) = \frac{1}{2\pi} \int_0^{2\pi}\frac{1-|z|^2}{|z-\ee^{\ii t}|^2}u(\ee^{\ii t}) \dd t, \qquad z \in \mathbb{D}\] for the Poisson integral of $u$. Then, 
\[
\mathcal{D}_{\m D}(P[u]) = \frac{1}{\pi} \int_{\m D} |\nabla P[u]|^2 dz^2 = 4 \sum_{k=1}^\infty k
|\widehat{u}_k|^2   = 2 \norm{u}^2_{\dot H^{1/2}}. \]

 We introduce the single-layer potential,
    \[
    {\bf S}[v](\ee^{\ii \t}) = \frac{1}{\pi}\int_{0}^{2\pi} v(\ee^{\ii t}) \log|\ee^{\ii t}-\ee^{\ii \t}|^{-1} \dd t.
    \]
\begin{lemma}\label{lem: single-layer-potential}
    The single-layer potential is an isometry ${\bf S} : \dot H^{-1/2} \to \dot H^{1/2}$.
\end{lemma}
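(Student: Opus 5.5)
The plan is to diagonalize ${\bf S}$ in the Fourier basis using the classical expansion of the logarithmic kernel on the circle. For $z = \ee^{\ii t}$, $w = \ee^{\ii \t}$ we have
\[
\log|\ee^{\ii t}-\ee^{\ii \t}|^{-1} = -\Re \log(1 - \ee^{\ii(t-\t)}) = \sum_{k\ge 1} \frac{\cos k(t-\t)}{k} = \sum_{k \in \m Z^*} \frac{\ee^{\ii k (t-\t)}}{2|k|},
\]
with convergence in $L^2$ (the coefficients are square summable). Hence the kernel is a convolution kernel whose zeroth Fourier coefficient vanishes, since $\int_0^{2\pi}\log|1-\ee^{\ii t}|\,\dd t = 0$.

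The first step is to compute ${\bf S}$ on a trigonometric polynomial $v = \sum_{k\in\m Z^*} \widehat v_k \ee^{\ii k t}$ with $\widehat v_0 = 0$. Integrating term by term against the kernel gives
\[
{\bf S}[v](\ee^{\ii\t}) = \frac1\pi\cdot 2\pi \sum_{k \in \m Z^*} \widehat v_k \frac{1}{2|k|}\ee^{\ii k\t} = \sum_{k\in\m Z^*}\frac{\widehat v_k}{|k|}\ee^{\ii k \t}.
\]
Here one should check the sign of the exponent: the integral $\int_0^{2\pi} \ee^{\ii k t}\ee^{\ii m(t-\t)}\,\dd t$ picks out $m=-k$, which produces $\ee^{\ii k\t}/(2|k|)$. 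So $\widehat{({\bf S}v)}_k = \widehat v_k/|k|$ for $k\ne 0$, and the zeroth coefficient of ${\bf S}v$ is $0$.

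The second step is the isometry computation:
\[
\|{\bf S}v\|_{\dot H^{1/2}}^2 = \sum_{k \in \m Z^*} |k|\,|\widehat v_k|^2/|k|^2 = \sum_{k \in \m Z^*}|k|^{-1}|\widehat v_k|^2 = \|v\|_{\dot H^{-1/2}}^2.
\]
The map is also onto, since any $u\in\dot H^{1/2}$ is ${\bf S}v$ with $\widehat v_k = |k|\widehat u_k$, and this $v$ lies in $\dot H^{-1/2}$ because $\|v\|_{\dot H^{-1/2}} = \|u\|_{\dot H^{1/2}}$.

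The final step is to extend from trigonometric polynomials to all of $\dot H^{-1/2}$ by density and continuity, so that ${\bf S}$ is interpreted as the continuous extension, or equivalently as pairing against the $L^2$ kernel. The only point needing care is the justification of term-by-term integration. On trigonometric polynomials it is immediate, because it is just orthogonality of the $L^2$ kernel expansion against finitely many exponentials. For $v\in L^2$ one can also argue directly via Parseval for the convolution, which agrees with the extension. I do not expect a real obstacle here.
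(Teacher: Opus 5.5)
Your proof is correct and follows the paper's route. The paper also writes the kernel as the convolution kernel $K(s)=2\log|\ee^{\ii s}-1|^{-1}=\sum_{|k|\ge 1}\ee^{\ii k s}/|k|$, reads off $\widehat{{\bf S}[v]}_k=\widehat v_k/|k|$, and computes the norms; your surjectivity remark and density extension from trigonometric polynomials just make explicit what the paper leaves implicit.
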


\begin{proof}
    We can rewrite the expression of ${\bf S}$ as 
    \[
    {\bf S}[v](\ee^{\ii s}) = \frac{1}{2\pi}\int_{0}^{2\pi} v(\ee^{\ii t}) K(t - s) \dd t,
    \]
    where 
    $$K(s) =   2\log |\ee^{\ii s} - 1|^{-1} =  \sum_{|k| \ge 1} \frac{\ee^{\ii k s}}{|k|}.$$
    From this, we deduce that the Fourier coefficients of $K$ are given by
    $\widehat K_k =  1/|k|$ for $k \neq 0$, and hence 
    $$ \widehat {{\bf S}[v]}_k  = \widehat v_k/|k|.$$
    This implies that 
    $$\norm{{\bf S}[v]}_{\dot H^{1/2}}^2 = \sum_{|k| \ge 1} |k| \left(\frac{\widehat v_k}{|k|}\right)^2 = \sum_{|k| \ge 1} |k|^{-1} |\widehat v_k|^2  = \norm{v}_{\dot H^{-1/2}}^2$$
    which completes the proof.
\end{proof}
\begin{lemma}\label{lem:log-integral-dirichlet}
Suppose $u, v\in C^\infty(\m S^1)$. Then,
\[
2\langle u,v \rangle_{\dot H^{1/2}}=\frac{1}{\pi^2}\iint_{[0,2\pi]^2} \dot{u}(\ee^{\ii s}) \overline{\dot{v}(\ee^{\ii t})} \log|\ee^{\ii s}-\ee^{\ii t}|^{-1} \dd s \dd t.
\]
\end{lemma}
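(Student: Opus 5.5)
We expand everything in Fourier series and compare coefficients; this reduces to Parseval together with the expansion of the logarithmic kernel already used in Lemma~\ref{lem: single-layer-potential}. Here $\dot u$ denotes $\frac{\dd}{\dd t} u(\ee^{\ii t})$. Write $u(\ee^{\ii t}) = \sum_k \widehat u_k \ee^{\ii k t}$ and $v(\ee^{\ii t}) = \sum_k \widehat v_k \ee^{\ii k t}$. Since $u,v$ are smooth, these coefficients decay rapidly. Then
\[
\dot u(\ee^{\ii s}) = \sum_{k\neq 0} \ii k\,\widehat u_k \ee^{\ii k s}, \qquad \overline{\dot v(\ee^{\ii t})} = \sum_{\ell\neq 0} (-\ii \ell)\,\overline{\widehat v_\ell}\, \ee^{-\ii \ell t},
\]
and both series converge absolutely and uniformly.

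For the kernel we use the expansion from the proof of Lemma~\ref{lem: single-layer-potential}:
\[
\log|\ee^{\ii s}-\ee^{\ii t}|^{-1} = \tfrac12 K(s-t) = \tfrac12\sum_{|m|\ge1} \frac{\ee^{\ii m(s-t)}}{|m|}.
\]
This series converges only in $L^2([0,2\pi]^2)$, not absolutely. That causes no difficulty, because the other factor $\dot u(\ee^{\ii s})\overline{\dot v(\ee^{\ii t})}$ is smooth, hence in $L^2$. So the double integral is an $L^2$ pairing, and we may evaluate it by Parseval on $[0,2\pi]^2$: it equals $(2\pi)^2$ times the sum over frequency pairs of the product of the corresponding Fourier coefficients.

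The kernel $\tfrac12 K(s-t)$ has coefficient $\frac{1}{2|m|}$ at the frequency pair $(m,-m)$ and zero elsewhere. Pairing it against the product of the two derivative series, only terms with $k = -m$ and $\ell = -m$ survive (with $m\neq 0$), so $k=\ell$. This gives
\[
\iint_{[0,2\pi]^2} \dot u(\ee^{\ii s}) \overline{\dot v(\ee^{\ii t})} \log|\ee^{\ii s}-\ee^{\ii t}|^{-1} \dd s\, \dd t = (2\pi)^2\sum_{k\neq0} (\ii k)(-\ii k)\,\widehat u_k\overline{\widehat v_k}\,\frac{1}{2|k|} = 2\pi^2 \sum_{k\neq 0}|k|\,\widehat u_k\overline{\widehat v_k}.
\]
Dividing by $\pi^2$ yields $2\sum_{k} |k|\widehat u_k\overline{\widehat v_k} = 2\langle u,v\rangle_{\dot H^{1/2}}$, which is the claim.

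The only delicate point is justifying the interchange of sum and integral for the conditionally convergent kernel series. One way is the $L^2$ pairing argument above. Alternatively, integrate first in $t$ for fixed $s$, writing $\frac1\pi\int \overline{\dot v(\ee^{\ii t})}\log|\ee^{\ii s}-\ee^{\ii t}|^{-1}\dd t = \overline{{\bf S}[\dot v]}(\ee^{\ii s})$, whose Fourier coefficients are $\overline{\ii k\widehat v_k}/|k|$ by Lemma~\ref{lem: single-layer-potential}. Then apply ordinary one-dimensional Parseval in $s$. The remaining work is bookkeeping of signs and the factors of $2\pi$.
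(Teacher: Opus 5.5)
Your proof is correct and follows essentially the same route as the paper: you expand $\dot u$, $\overline{\dot v}$ and the kernel $\log|\ee^{\ii s}-\ee^{\ii t}|^{-1}=\tfrac12\sum_{|m|\ge 1}\ee^{\ii m(s-t)}/|m|$ in Fourier series and let orthogonality select the diagonal terms $k=\ell$, which gives $2\pi^2\sum_{k}|k|\,\widehat u_k\overline{\widehat v_k}$ for the double integral. Your justification of the termwise integration (via the $L^2$ pairing on the torus, or alternatively via ${\bf S}[\dot v]$ and Lemma~\ref{lem: single-layer-potential}) is a useful addition, since the paper performs this interchange without comment.
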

\begin{proof}
Write $u(\ee^{\ii t}) = \sum_{k\in \m Z} \widehat{u}_k \ee^{\ii kt}$ so that $\dot{u}(\ee^{\ii t}) = \ii \sum_{k\in \m Z} k \widehat{u}_k \ee^{\ii kt}$. Then,
    \begin{align*}
    \frac{1}{(2 \pi)^2} &\iint_{[0,2\pi]^2} \dot{u}(\ee^{\ii s}) \overline{\dot{v}(\ee^{\ii t})} \log|\ee^{\ii s}-\ee^{\ii t}|^{-1} \dd s \dd t \\
    &= \frac{1}{2 (2\pi)^2}\iint \left(\sum_{ |k| \ge 1} \ii k \widehat{u}_k \ee^{\ii k s}\right)\left(\sum_{|k| \ge 1} -\ii k \overline{\widehat{v}_k} \ee^{-\ii k t}\right) \sum_{|k| \ge 1} \frac{\ee^{\ii k(t-s)}}{|k|} \dd s \dd t \\
     & = \frac{1}{2} \sum_{|k| \ge 1}|k|\widehat{u}_k \overline{\widehat{v}_k},
    \end{align*}
    as claimed.
\end{proof}

Now we turn to the double-index Fourier series. We have the following result on the convergence. See Theorem~7.2 of \cite{sjolin} and the references therein.
\begin{lemma}[See \cite{sjolin}]\label{lemma:sjolin}
    Suppose $u \in L^2(\mathbb{S}^1 \times \mathbb{S}^1)$ and for $k,\ell \in \mathbb{Z}$, let \[\widehat{u}_{k, \ell} = \frac{1}{(2\pi)^2}\iint_{[0,2\pi]^2} u(\ee^{\ii s}, \ee^{\ii t})\ee^{-\ii (ks + \ell t)} \dd s \dd t\]
    be the Fourier coefficients of $u$. Consider the partial sums,
    \[
S_{m,n} [u](\ee^{\ii s}, \ee^{\ii t}) = \sum_{|k| \le m}\sum_{|\ell|\le n}\widehat{u}_{k, \ell}\ee^{\ii (ks+\ell t)}.
\] 
Then the square partial sums,
    $\lim_{n \to \infty} S_{n,n}[u] = u$ a.e.\ in $\mathbb{S}^1 \times \mathbb{S}^1$ and in $L^2$.
    
    If in addition,
    \begin{equation}\label{eq:Condition 1}
    \sum_{k, \ell}|\widehat{u}_{k, \ell}|^2(\log \min(|k|+2, |\ell|+2))^2 < \infty,
    \end{equation}
     the rectangular partial sums, $\lim_{m,n\to \infty} S_{m,n}[u] = [u]$ a.e.\ in $\mathbb{S}^1 \times \mathbb{S}^1$.
\end{lemma}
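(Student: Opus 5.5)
This lemma is classical and is quoted from \cite{sjolin}. We would not reprove it in full. The plan is to reduce it to the one- and two-dimensional maximal inequalities of Carleson--Hunt type, combined with an orthogonality argument of Rademacher--Menshov type.

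\textbf{$L^2$ convergence.} This is immediate from Parseval on $\mathbb{S}^1\times\mathbb{S}^1$, since
\[
\|u-S_{n,n}[u]\|_{L^2}^2=\sum_{\max(|k|,|\ell|)>n}|\widehat u_{k,\ell}|^2\to 0 .
\]

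\textbf{A.e. convergence of square sums.} We would invoke Fefferman's extension of Carleson's theorem to polygonal partial sums. The key input is an $L^2$ bound for the maximal operator $u\mapsto \sup_n |S_{n,n}[u]|$.

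The indicator of the square $\{|k|\le n,\,|\ell|\le n\}$ is a combination of indicators of half-planes $\{\pm k\le n\}$ and $\{\pm\ell\le n\}$. Each half-plane multiplier is a modulated Hilbert transform in one variable, with modulation frequency $n$. After linearizing $n=n(s,t)$, one applies the Carleson--Hunt bound in the relevant variable, with the other variable frozen, and then integrates in the frozen variable.

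The genuine difficulty is that the same $n$ appears in both factors of the product. Fefferman handles this by a direct maximal bound for polygonal multipliers with fixed directions. We would simply cite that bound. Given the maximal inequality, a.e.\ convergence follows by the standard density argument: it holds trivially for trigonometric polynomials, and these are dense in $L^2$.

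\textbf{Rectangular sums under \eqref{eq:Condition 1}.} We would split $u=u_1+u_2$, where $u_1$ carries the coefficients with $|\ell|\le|k|$ and $u_2$ those with $|k|<|\ell|$. By symmetry it suffices to treat $u_1$, for which the weight in \eqref{eq:Condition 1} is $\log^2(|\ell|+2)$.

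Group $u_1=\sum_{\ell}c_\ell(s)\ee^{\ii\ell t}$, with $c_\ell(s)=\sum_k \widehat u_{k,\ell}\ee^{\ii ks}$.
\begin{itemize}
\item In the variable $s$, Carleson--Hunt controls $\sup_m|S_m c_\ell|$ in $L^2$, with norm $\lesssim\|c_\ell\|_{L^2}$.
\item In the variable $t$, the functions $\ee^{\ii\ell t}$ are orthonormal. The weight $\log^2|\ell|$ is exactly the Rademacher--Menshov condition.
\end{itemize}
To combine these, we would run the Rademacher--Menshov dyadic-block argument in $\ell$ with coefficients replaced by the linearized quantities $S_{m(s,t)}c_\ell(s)$. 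This should give a maximal bound of the form
\[
\Big\|\sup_{m,n}|S_{m,n}u_1|\Big\|_{L^2}\lesssim\Big(\sum|\widehat u_{k,\ell}|^2\log^2(|\ell|+2)\Big)^{1/2}.
\]
The conclusion then follows by density, as in the square case.

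\textbf{Main obstacle.} We expect the hardest step to be this coupling of the Carleson maximal function in $k$ with the Menshov maximal function in $\ell$. This is where we would follow Sjölin's argument rather than reconstruct it.
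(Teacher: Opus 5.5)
The paper gives no proof of this lemma; it simply quotes Theorem~7.2 of \cite{sjolin}. Your reliance on the literature is therefore consistent with the paper. Your outline is a sound reconstruction: Parseval for the $L^2$ statement, a maximal inequality plus density for a.e.\ convergence, and Carleson--Hunt in one variable combined with Rademacher--Menshov in the other on the cones $|\ell|\le|k|$ and $|k|<|\ell|$. What your sketch adds over a bare citation is that it becomes self-contained modulo Carleson--Hunt, because neither step you flag as hard actually is.

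For the square sums, your own cone splitting already suffices. If $u_1$ carries the coefficients with $|\ell|\le|k|$, then $|k|\le n$ forces $|\ell|\le n$. Hence $S_{n,n}[u_1]$ is the partial sum of $u_1$ in the variable $s$ alone, and $\sup_n|S_{n,n}[u]|\le \mathcal{C}^{(s)}u_1+\mathcal{C}^{(t)}u_2$, where $\mathcal{C}^{(s)}$ is the one-dimensional Carleson maximal operator acting in $s$ for fixed $t$. There is no ``same $n$ in both factors'' issue. In $L^2$ the cone projection is just an orthogonal projection, so no polygonal multiplier bounds are needed.

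For the rectangular sums, do not feed $S_{m(s,t)}c_\ell(s)$ into Rademacher--Menshov as coefficients. They depend on $t$, so orthogonality in $t$ is lost. Instead, take the supremum over $m$ inside each dyadic block $I$ of indices $\ell$:
\[
\sup_m\Big|\sum_{\ell\in I}(S_m c_\ell)(s)\,\ee^{\ii\ell t}\Big|\le \mathcal{C}^{(s)}\big[F_I(\cdot,t)\big](s),\qquad F_I(s,t)=\sum_{\ell\in I}c_\ell(s)\,\ee^{\ii\ell t}.
\]
Then apply Carleson--Hunt in $s$ for fixed $t$, followed by Parseval in $t$, to get $\|\mathcal{C}^{(s)}F_I\|_{L^2}^2\lesssim\sum_{\ell\in I}\|c_\ell\|_{L^2}^2$. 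The standard Rademacher--Menshov bookkeeping finishes the argument: $O(J)$ dyadic blocks for $n<2^J$, Cauchy--Schwarz over the levels, then the dyadic-shell argument in $|\ell|$. This gives exactly your weighted maximal inequality with weight $\log^2(|\ell|+2)$. The symmetric argument for $u_2$ gives weight $\log^2(|k|+2)$, which is how the $\min$ in \eqref{eq:Condition 1} arises.
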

 We note that in the case of rectangular summation, without the additional assumption \eqref{eq:Condition 1}, the convergence may fail everywhere even when $u$ is continuous.

\begin{lemma}\label{lem:Holder continuity implies L2}
   Suppose $\vp, \vp^{-1}$ are both H\"older continuous circle homeomorphisms. There exists $C$ such that for all $z,w \in \mathbb{S}^1 \times \mathbb{S}^1$,
    \begin{equation}\label{oct16:1}
       \left| \log \left|\frac{\vp(z)-\vp(w)}{z-w}\right|\right| \le C |\log|z-w||
    \end{equation}
   and consequently $\log \left|\vp(z)-\vp(w)\right|/ \left| z-w\right|  \in L^p (\m S^1 \times \m S^1)$ for all $p < \infty$. In particular, this holds if $\vp \in \QS(\mathbb{S}^1)$. 
\end{lemma}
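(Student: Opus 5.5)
The bound will come from two-sided Hölder estimates on $\vp$ and on $\vp^{-1}$. Let $\alpha,\beta\in(0,1]$ and $C_1,C_2$ be such that
\[
|\vp(z)-\vp(w)|\le C_1|z-w|^{\alpha}, \qquad |\vp^{-1}(z)-\vp^{-1}(w)|\le C_2|z-w|^{\beta}
\]
for all $z,w\in\mathbb{S}^1$. Substituting $z\mapsto\vp(z)$, $w\mapsto\vp(w)$ in the second inequality gives the lower bound
\[
|\vp(z)-\vp(w)|\ge C_2^{-1/\beta}|z-w|^{1/\beta}.
\]
Dividing both bounds by $|z-w|$ yields
\[
C_2^{-1/\beta}|z-w|^{1/\beta-1}\le\left|\frac{\vp(z)-\vp(w)}{z-w}\right|\le C_1|z-w|^{\alpha-1}.
\]
Taking logarithms, and using $|z-w|\le 2$, we obtain
\[
\left|\log\left|\frac{\vp(z)-\vp(w)}{z-w}\right|\right|\le A+B\,|\log|z-w||
\]
with $A=\max(|\log C_1|,\tfrac1\beta|\log C_2|)$ and $B=\max(1-\alpha,\tfrac1\beta-1)$.

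The additive constant $A$ cannot be absorbed into $C|\log|z-w||$ near $|z-w|=1$, where $\log|z-w|$ vanishes. The estimate \eqref{oct16:1} should therefore be read as $C(1+|\log|z-w||)$, or else one observes that only the integrability consequence is needed. I would state the bound in this form; the main care is this bookkeeping, since there is no genuine obstacle.

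For integrability, fix $w$ and write $z=\ee^{\ii s}$, $w=\ee^{\ii t}$, so that $|z-w|=2|\sin((s-t)/2)|$. The function $\theta\mapsto|\log|2\sin(\theta/2)||^p$ has only a logarithmic singularity at $\theta=0$, and $|\log\theta|^p$ is integrable near $0$ for every $p<\infty$. By Fubini and rotation invariance,
\[
\iint\left(1+|\log|z-w||\right)^p\,\dd s\,\dd t=2\pi\int_0^{2\pi}\left(1+|\log|2\sin(\theta/2)||\right)^p\,\dd\theta<\infty .
\]
Hence $\log|\vp(z)-\vp(w)|/|z-w|\in L^p$ for all $p<\infty$.

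For the final claim, Mori's theorem applies because a quasisymmetric $\vp$ extends to a $K$-quasiconformal self-map of $\mathbb{D}$. By reflection this gives a quasiconformal self-map of the plane preserving $\mathbb{S}^1$. Mori's theorem then shows that the map is $1/K$-Hölder on $\mathbb{S}^1$. Its inverse is also $K$-quasiconformal, so $\vp^{-1}$ is Hölder as well. The hypotheses of the first part are therefore satisfied.
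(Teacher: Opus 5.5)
Your argument is correct and follows the same route as the paper, which simply says the bound is immediate from the two-sided H\"older estimates and cites Mori's theorem (Corollary~3.10.3 of \cite{astala2008elliptic}) for the quasisymmetric case. Your caveat about the additive constant is also right: as literally written, \eqref{oct16:1} would force $|\vp(z)-\vp(w)|=1$ whenever $|z-w|=1$, which fails already for M\"obius maps other than rotations, so the bound should be read as $C(1+|\log|z-w||)$, and this is all that the $L^p$ conclusion and the later dominated-convergence arguments need.
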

\begin{proof}
    The estimate \eqref{oct16:1} follows immediately from the assumption that $\vp$ and $\vp^{-1}$ are both H\"older continuous. The statement about $\vp \in \QS(\mathbb{S}^1)$ follows, e.g., from Corollary~3.10.3 of \cite{astala2008elliptic}.
\end{proof}

\begin{lemma}\label{lem:convergence of grunsky expansion}
    Suppose $\vp \in \operatorname{Diff}^\infty(\mathbb{S}^1)$ and $p < \infty$. There exists a constant $C$ such that for all $k, \ell$, the Fourier coefficients $\widehat \oo_{k,\ell}$ of $\log \left|\vp(z)-\vp(w)\right|/ \left| z-w\right| $  satisfy
    \[
    |\widehat{\oo}_{k, \ell}| \le C (1+k^2 + \ell^2)^{-p}.
    \]
    In particular, the rectangular partial sums $S_{n,m}$ of the Fourier series of $\log \left|\frac{\vp(z)-\vp(w)}{z-w}\right|$ converge absolutely and uniformly on $\mathbb{S}^1 \times \mathbb{S}^1$, as $n,m \to \infty$.
\end{lemma}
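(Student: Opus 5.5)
The plan is to show that the kernel
\[
F(s,t) \coloneqq \log\left|\frac{\vp(\ee^{\ii s})-\vp(\ee^{\ii t})}{\ee^{\ii s}-\ee^{\ii t}}\right|
\]
is a $C^\infty$ function on the torus $[0,2\pi]^2$, and then to integrate by parts repeatedly in \eqref{eq:welding-grunsky-coeff}. Off the diagonal this is immediate: $\vp$ is a smooth diffeomorphism, so numerator and denominator are smooth and nonvanishing when $s \neq t$. The only issue is the diagonal $s=t$.

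To treat the diagonal, write $\vp(\ee^{\ii t}) = \ee^{\ii h(t)}$, where $h:\m R\to\m R$ is smooth, $h(t+2\pi) = h(t)+2\pi$ and $h' > 0$. Then
\[
|\ee^{\ii a}-\ee^{\ii b}| = 2|\sin((a-b)/2)|,
\]
so
\[
F(s,t) = \log\left|\frac{\sin((h(s)-h(t))/2)}{\sin((s-t)/2)}\right|.
\]
Next, use the divided-difference representation $h(s)-h(t) = (s-t)\int_0^1 h'(t+u(s-t))\,\dd u$. It gives
\[
\frac{\sin((h(s)-h(t))/2)}{\sin((s-t)/2)} = A\big((h(s)-h(t))/2\big)\cdot\frac{(s-t)/2}{\sin((s-t)/2)}\cdot \int_0^1 h'(t+u(s-t))\,\dd u ,
\]
where $A(x) = \sin x / x$ is entire. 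For $|s-t|$ small, each factor is smooth in $(s,t)$ and strictly positive: the integral is at least $\min h' > 0$, and the other two factors are close to $1$. Hence $F$ is $C^\infty$ in a neighbourhood of the diagonal. Together with smoothness off the diagonal and $2\pi$-periodicity in each variable, this shows $F \in C^\infty(\mathbb{S}^1\times\mathbb{S}^1)$.

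With smoothness in hand, integrating by parts $2p$ times in $s$ and $2p$ times in $t$ in the definition of $\widehat\oo_{k,\ell}$ (boundary terms vanish by periodicity) gives $|k|^{2p}|\widehat\oo_{k,\ell}| \le \|\partial_s^{2p}F\|_\infty$ and $|\ell|^{2p}|\widehat\oo_{k,\ell}| \le \|\partial_t^{2p}F\|_\infty$, together with the trivial bound $|\widehat\oo_{k,\ell}|\le\|F\|_\infty$. Since $(1+k^2+\ell^2)^p \le 3^p\max(1,k^{2},\ell^{2})^p$, combining the three bounds yields $|\widehat\oo_{k,\ell}| \le C(1+k^2+\ell^2)^{-p}$. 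Taking $p \ge 2$, the coefficients are absolutely summable over $\m Z^2$. Hence the Fourier series converges absolutely and uniformly, and therefore so do its rectangular partial sums $S_{n,m}$. The limit equals $F$, since a continuous function is determined by its Fourier coefficients; alternatively one can invoke Lemma~\ref{lemma:sjolin}.

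The main obstacle is the smoothness across the diagonal. The divided-difference factorization handles it, and care is needed only to check that the logarithm's argument stays bounded away from zero near the diagonal, which follows from $h'>0$.
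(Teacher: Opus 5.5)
Your proposal is correct and takes the same route as the paper, whose entire proof is the remark that the lemma follows by integration by parts; you also supply what the paper leaves implicit, namely that the kernel is $C^\infty$ across the diagonal, via the divided-difference factorization with $\min h'>0$. One small fix: when $2p$ is not an integer, integrate by parts $2\lceil p\rceil$ times instead of $2p$ times, and the stated bound follows.
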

\begin{proof}
The lemma follows using integration by parts.    
\end{proof}

\subsection{The classical Grunsky operators}\label{sect:classical_grunsky}
The Grunsky coefficients $a_{k,\ell}$, for $k,\ell \in \mathbb{Z}$, associated with the conformal maps $f,g$ mapping to the interior and exterior of a Jordan curve with the normalization: $f(0) = 0$, $f'(0) = 1$, $g (\infty) = \infty$, respectively, are defined as follows:
\[
\log \frac{g(z)-g(w)}{z-w} =  a_{0,0} - \sum_{k=1}^\infty \sum_{\ell=1}^\infty a_{k,\ell} z^{-k}w^{-\ell}, \quad z,w \in \mathbb{D}^*;
\]
\[
\log \frac{f(z)-f(w)}{z-w} = a_{0,0} - \sum_{k=0}^\infty \sum_{\ell=0}^\infty a_{-k, -\ell} z^k w^\ell, \quad z,w\in \mathbb{D};
\]
\[
\log \frac{g(z)-f(w)}{z}= a_{0,0}-\sum_{k=1}^\infty \sum_{\ell =0}^\infty a_{k, -\ell}z^{-k} w^\ell, \quad z, 1/w \in \mathbb{D}^*.
\]
Here, $a_{0,0} = \log g'(\infty)$. 
We extend the definition by symmetry: $a_{-\ell, k} \coloneqq a_{k, -\ell}, k \ge 1, \ell \ge 0$. Then we define the following semi-infinite matrices corresponding to the pair $(f,g)$, respectively,
\[
B_1 = \left(\sqrt{k\ell} \, a_{-k, -\ell}\right)_{k,\ell \ge 1}, \quad B_2 = \left(\sqrt{k\ell} \,a_{-k,\ell}\right)_{k, \ell \ge 1};
\]
and 

\[
 B_3 = \left(\sqrt{k\ell} \, a_{k, -\ell}\right)_{k, \ell \ge 1}, \quad  B_4 =\left(\sqrt{k\ell}\, a_{k, \ell}\right)_{k,\ell \ge 1}.  
\]
Note that $B_4$ is the classical Grunsky matrix. These matrices act on $\ell^2$ by matrix multiplication. 

Given $A=A_{k, \ell}$, we let \[(\overline{A})_{k, \ell} \coloneqq \overline{A_{k, \ell}}, \quad (A^t)_{k, \ell} \coloneqq A_{\ell, k}, \quad (A^*)_{k, \ell} \coloneqq \overline{A_{\ell, k}}.\] 

\begin{lemma}[See II.2.1 of \cite{TT06}] \label{lem:Grunsky_eq}
    We have
    \[
    B_1B_1^* + B_2B_2^* = I, \qquad B_3B_3^* + B_4B_4^* = I,
    \]
    and for $j= 1,\ldots, 4$, and any $u \in \ell_{{\mathbb{Z}_+}}^2$,
    \[\|B_ju\|_{\ell^2}^2 \le \|u\|_{\ell^2}^2.\]
\end{lemma}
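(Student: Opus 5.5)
The plan is to view the four Grunsky matrices as blocks of a single operator built from the welding maps, and to read both identities off the unitarity of that operator. The norm bounds then follow at once.

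Concretely, I would derive the identities from the unitarity of the block operator
\[
{\bf B} = \begin{pmatrix} B_1 & B_2\\ B_3 & B_4\end{pmatrix}
\]
acting on $\ell^2_{\mathbb{Z}_+}\oplus\ell^2_{\mathbb{Z}_+}$, or at least from the fact that it is a co-isometry, ${\bf B}{\bf B}^*={\bf I}$. The two identities $B_1B_1^*+B_2B_2^*=I$ and $B_3B_3^*+B_4B_4^*=I$ are exactly the diagonal blocks of ${\bf B}{\bf B}^*={\bf I}$. Since $B_jB_j^*\le I$, this gives $\|B_j^*\|\le 1$, hence $\|B_j\|=\|B_j^*\|\le1$, which is the claimed bound $\|B_ju\|\le\|u\|$.

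To establish the isometry property I would work first with a smooth (even analytic) Jordan curve, so that all series converge and integration by parts on the curve is legitimate; the general quasicircle case follows by approximation.

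1. \emph{Interpret the coefficients as Faber-type expansion coefficients.} Differentiating the defining generating functions in $z$ and $w$ shows that, up to the $\sqrt{k\ell}$ normalization, $a_{k,\ell}$, $a_{-k,-\ell}$ and $a_{k,-\ell}$ are the coefficients in the expansions of $f^{-1}$- and $g^{-1}$-pullbacks of the functions $g(z)^{\ell}$-type Faber polynomials. Concretely, the entries of ${\bf B}$ describe how the canonical basis of harmonic functions with finite Dirichlet energy in $D$ and $D^*$ decomposes into the canonical bases $\{z^k/\sqrt{k}\}$ on $\mathbb{D}$ and $\{z^{-k}/\sqrt{k}\}$ on $\mathbb{D}^*$.

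2. \emph{Express ${\bf B}$ via Dirichlet pairings.} Given $h\in\ell^2$, I would form the holomorphic function $F_h$ on $D^*\cup D$ whose pullbacks have coefficients $h$. I would then compute the Dirichlet integrals $\frac1\pi\int|F'|^2$ over $D$ and $D^*$ using Green's formula, turning each into a boundary integral over $\gamma$. The key fact is that the boundary integrals of $\bar F\,dF$ from the two sides cancel exactly, which yields $\|{\bf B}^*h\|^2=\|h\|^2$. This is the classical proof of the Grunsky equalities, due to Grunsky, Schiffer, and Takhtajan--Teo.

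3. \emph{Pass to general quasicircles.} Approximate a general quasicircle by analytic curves, for instance the images of circles $|z|=r$ under the appropriate maps. The Grunsky coefficients then converge entrywise, and bounded-norm entrywise convergence lets the identities pass to the limit, using weak operator limits together with the uniform bound $\|{\bf B}\|\le1$.

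The main obstacle is step 2. It requires careful bookkeeping of the index conventions ($a_{0,0}$, the extended symmetry $a_{-\ell,k}=a_{k,-\ell}$, and $\ell=0$ versus $\ell\ge1$), so that the cancellation produces exactly the identity matrix rather than a rank-one correction. It also requires justifying, in the limit of step 3, that the identity survives, since weak limits preserve $\le$ but not $=$. For the latter I would first try to show that the limit is still co-isometric, using the explicit Dirichlet-energy interpretation; failing that, I would simply cite II.2.1 of \cite{TT06} for the equalities and prove only the contraction bound directly.
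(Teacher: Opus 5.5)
Your reduction is sound: the two identities are the diagonal blocks of ${\bf B}{\bf B}^*={\bf I}$, and $B_jB_j^*\le I$ gives $\|B_j\|\le 1$. The paper gives no argument and simply cites \cite{TT06}, which is your fallback.

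The genuine gap is step~3, and the soft means you propose cannot close it. Entrywise convergence with $\|{\bf B}_r\|\le 1$ only gives $\|{\bf B}^*h\|\le\|h\|$ in the limit. Moreover, step~3 never uses that $\gamma$ is a quasicircle. Approximation by $f(\{|z|=r\})$ and entrywise convergence of the Grunsky coefficients (Carath\'eodory kernel theorem) work for \emph{every} Jordan curve, but the equalities are false for Jordan curves of positive area. To see this, let $\Phi_n$ be the Faber polynomials of $g$, so that $\Phi_n(g(z))=z^n+n\sum_{\ell\ge1}a_{n,\ell}z^{-\ell}$ and $\Phi_n(f(z))=n\sum_{\ell\ge0}a_{n,-\ell}z^{\ell}$. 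Put $P=\sum_n h_n n^{-1/2}\Phi_n$ for finitely supported $h$. Apply Green's formula on the analytic Jordan domain bounded by $g(\{|z|=\rho\})$, $\rho>1$, split it as $D\cup\gamma\cup g(\{1<|z|<\rho\})$, and compute the pieces over $D$ and over the annulus by conformal invariance. For every Jordan curve this gives
\[
\|h\|^2=\|B_2h\|^2+\|B_4h\|^2+\frac1\pi\int_\gamma|P'|^2\,dA .
\]
For $h=e_1$ the last term equals $\mathrm{area}(\gamma)/(\pi|g'(\infty)|^2)$. So for an Osgood curve $B_2^*B_2+B_4^*B_4\ne I$. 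Conjugating and using $B_4^t=B_4$ and $B_2=B_3^t$, also $B_3B_3^*+B_4B_4^*\ne I$. The missing ingredient is precisely that quasicircles have zero area.

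The repair is to drop the approximation altogether. The displayed identity is exact for each $\rho>1$. You must work on this truncated domain, because the Dirichlet integral of $P$ over $D^*$ diverges; step~2 as you phrase it does not literally make sense. A quasicircle has zero area, being the image of $\mathbb{S}^1$ under a quasiconformal map of the plane. Hence $B_2^*B_2+B_4^*B_4=I$ on finitely supported vectors, and so on $\ell^2$. Complex conjugation then yields $B_3B_3^*+B_4B_4^*=I$. Next apply the same identity to the curve $1/\gamma$ with the pair $(1/g(1/z),1/f(1/z))$, which interchanges $B_1\leftrightarrow B_4$ and $B_2\leftrightarrow B_3$. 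This gives $B_1^*B_1+B_3^*B_3=I$, and conjugating gives $B_1B_1^*+B_2B_2^*=I$. Neither the full co-isometry nor your mixed function with poles at both $0$ and $\infty$ is needed for the lemma as stated.

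Alternatively, granting Theorem~\ref{thm:relation between Grunsky and composition}, both identities are purely algebraic. Use $XX^*=I+YY^*$ (from ${\bf C}_\vp{\bf C}_{\vp^{-1}}={\bf I}$) and $X^t\overline X=I+Y^*Y$ (the bottom-right block of ${\bf C}_{\vp^{-1}}{\bf C}_\vp={\bf I}$). Then $B_1B_1^*+B_2B_2^*=Y(I+Y^*Y)^{-1}Y^*+(I+YY^*)^{-1}=I$ and $B_3B_3^*+B_4B_4^*=(I+Y^*Y)^{-1}+Y^*(I+YY^*)^{-1}Y=I$.
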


The classical Grunsky inequality states that $B_4$ (or $B_1$) viewed as an operator acting on $\ell_{\mathbb{Z}_+}^2$ is a contraction, see \cite{Pom_uni, Duren}. Moreover, it is a strict contraction if and only if the induced welding homeomorphism $\vp = g^{-1}\circ f$ is quasisymmetric, where the operator norm depends only on the distortion of $\vp$ \cite{Pom_uni}.  
See \cite{TT06} for more information and references, where the matrix $B_1$ is also identified with the Kirillov--Yuriev--Nag--Sullivan period mapping of $T(1)$ \cite{KirillovYuriev1988,Nag_Sullivan}.

We now have the following equivalent description of Weil--Petersson quasicircles and the Loewner energy.

\begin{thm}[See \cite{TT06}]
    A Jordan curve $\gamma$ is a Weil--Petersson quasicircle if and only if $B_1$ (or $B_4$) is Hilbert--Schmidt as an operator on $\ell^2_{\mathbb{Z}_+}$. 
Moreover, the Loewner energy is expressed as
\begin{align}\label{def:loewner energy grunsky}
    I^L(\gamma) = -12 \log \det(I-B_1B_1^*) = -12 \log \det(I-B_4B_4^*).
\end{align}
\end{thm}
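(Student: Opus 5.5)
This statement is quoted from \cite{TT06}, so the plan is to reconstruct Takhtajan--Teo's argument. It has two parts: an analytic characterization of the Weil--Petersson class, and an exact determinant identity. Throughout I normalize $f(0)=0$, $f'(0)=1$, $g(\infty)=\infty$.

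\textbf{Characterization.} I would start from the Schwarzian-derivative description of $\WP$: $\gamma$ is Weil--Petersson iff
\[
\int_{\m D}|S(f)|^2(1-|z|^2)^2\,d^2z<\infty,
\]
equivalently iff $\log f'$ has finite Dirichlet integral. Next, differentiate the second generating identity of Section~\ref{sect:classical_grunsky} in $z$ and $w$. This gives
\[
\partial_z\partial_w\log\frac{f(z)-f(w)}{z-w}=-\sum_{k,\ell\ge1}k\ell\,a_{-k,-\ell}z^{k-1}w^{\ell-1}.
\]
The Hilbert--Schmidt norm of $B_1$ is then the weighted Bergman norm
\[
\|B_1\|_{HS}^2=\frac{1}{\pi^2}\iint_{\m D\times\m D}\Big|\frac{f'(z)f'(w)}{(f(z)-f(w))^2}-\frac1{(z-w)^2}\Big|^2\,d^2z\,d^2w.
\]
I would bound this kernel in terms of $S(f)$ in both directions. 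One direction is to restrict to the diagonal, where the kernel equals $S(f)(z)/6$. The other is a reproducing-kernel argument, for which Lemma~\ref{lem:Grunsky_eq} supplies the contraction bounds. The same argument applied to $g$ handles $B_4$.

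\textbf{Determinant formula.} The strategy is a first-variation argument: show that $-12\log\det(I-B_1B_1^*)$ and $I^L$ have the same differential along any smooth path in $T_0(1)$ and both vanish at the identity. For $I^L$ the variation is the known Takhtajan--Teo formula, namely the first variation of $I^L$ is the real part of the pairing of the Beltrami differential $\mu$ with $-12\,S(f)$ (or the appropriate combination of $S(f)$ and $S(g)$). For the determinant, write $\delta\log\det(I-B_1B_1^*)=-\operatorname{tr}\big((I-B_1B_1^*)^{-1}\delta(B_1B_1^*)\big)$. Then compute $\delta B_1$ by the Hadamard-type variational formula for the Grunsky coefficients under a quasiconformal deformation supported in $\m D^*$.

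The main obstacle is showing that this trace equals the Schwarzian pairing. I would first try to exploit the identity $B_1B_1^*+B_2B_2^*=I$, so that $(I-B_1B_1^*)^{-1}=(B_2B_2^*)^{-1}$. The trace should then reduce to a Bergman-kernel integral whose diagonal value produces $S(f)/6$. The constant $12$ comes from this $1/6$ together with the real part, which doubles. Vanishing at the identity is clear: if $\varphi\in\mob(\m S^1)$ then $B_1=0$ and $I^L=0$. Connectivity of $T_0(1)$, for instance the straight path $t\mu$ in the space of Beltrami differentials, then gives the identity.

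Finally, the equality with $B_4$ follows from the second relation $B_3B_3^*+B_4B_4^*=I$ together with the intertwining relation between $B_1$ and $B_4$ coming from \eqref{eq:composition-matrix1}. Concretely, $I-B_1B_1^*$ and $I-B_4B_4^*$ are conjugate up to unitaries via $X$. Equivalently, $\det(I-B_1B_1^*)^{-1}=\det(XX^*)=\det(I-B_4B_4^*)^{-1}$.
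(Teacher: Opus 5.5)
The paper gives no proof of this statement; it is quoted from \cite{TT06}. Your overall architecture matches Takhtajan--Teo's: a Hawley--Schiffer type first-variation argument, anchored by vanishing at the origin. The diagonal restriction correctly gives ``$B_1$ Hilbert--Schmidt $\Rightarrow$ Weil--Petersson'', since restriction to the diagonal is bounded from the Bergman space of $\mathbb{D}\times\mathbb{D}$ into $A^2(\mathbb{D},(1-|z|^2)^2\,d^2z)$. Your last paragraph is also right: $I-B_1B_1^*=B_2B_2^*=(XX^*)^{-1}$, $I-B_4B_4^*=B_3B_3^*=\overline{(X^*X)^{-1}}$, and $\det(XX^*)=\det(X^*X)$. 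This is exactly how the paper later obtains Proposition~\ref{prop:IL_X_Y}. However, the two steps that carry the analytic content are not supplied.

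\textbf{First gap: Weil--Petersson $\Rightarrow$ $B_1$ Hilbert--Schmidt.} A reproducing-kernel argument fed by Lemma~\ref{lem:Grunsky_eq} cannot do this. The bound $\|B_j\|\le 1$ holds for every Jordan curve and carries no Hilbert--Schmidt information. The diagonal restriction has an enormous kernel, so the weighted norm of $K(z,z)=S(f)(z)/6$ gives no control of $\iint_{\mathbb{D}\times\mathbb{D}}|K|^2$ unless univalence is used quantitatively.

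The missing ingredient is a quasiconformal extension $\hat f$ of $f$ whose Beltrami coefficient $\mu$, supported in $\mathbb{D}^*$, satisfies $\|\mu\|_\infty=k<1$ and $\iint_{\mathbb{D}^*}|\mu|^2(|\zeta|^2-1)^{-2}\,d^2\zeta<\infty$. Its existence for Weil--Petersson curves is a genuine theorem (Cui; Takhtajan--Teo \cite{TT06}). Given it, fix $w\in\mathbb{D}$ and set $F_w(\zeta)=\frac{f'(w)}{\hat f(\zeta)-f(w)}-\frac{1}{\zeta-w}$. This function is holomorphic in $\mathbb{D}$, and $\phi_w=\bar\partial F_w$ solves $(I-\mu\mathcal{B})\phi_w=-\mu\,(\zeta-w)^{-2}$, where $\mathcal{B}$ is the Beurling transform. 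Since $K(\cdot,w)=-\mathcal{B}\phi_w$ on $\mathbb{D}$ and $\mathcal{B}$ is an $L^2$ isometry, integrating over $w$ gives $\|B_1\|_{HS}^2\le\frac{1}{\pi(1-k)^2}\iint_{\mathbb{D}^*}\frac{|\mu(\zeta)|^2}{(|\zeta|^2-1)^2}\,d^2\zeta$. The same square-integrable representative is also what you need for your straight path $t\mu$ to be a path in $T_0(1)$.

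Inside this paper one could instead read the equivalence off $B_1=Y\overline{X}^{-1}$ and $B_4=-Y^*(X^*)^{-1}$, together with Lemma~\ref{lem:composition-grunsky-closed} and Theorem~\ref{thm:comp_compact_HS}. That only relocates the analysis to \cite{HS12}.

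\textbf{Second gap: the determinant identity.} The step you call the main obstacle is the substance of the theorem, and you only conjecture it. You need two things.
\begin{enumerate}
\item[(a)] $B_1$ must depend holomorphically on the point as a Hilbert--Schmidt-valued map, so that $\partial\log\det(I-B_1B_1^*)=-\operatorname{tr}\big((I-B_1B_1^*)^{-1}(\partial B_1)B_1^*\big)$ is justified.
\item[(b)] You must actually evaluate that trace.
\end{enumerate}
Your heuristic for (b) is misaimed. For a deformation supported in the exterior, the Hadamard variation of $B_1$ has kernel $-\frac1\pi\iint\nu(\eta)\frac{f'(z)f'(w)}{(\eta-f(z))^2(\eta-f(w))^2}\,d^2\eta$, with $\eta$ on the exterior side. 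The trace therefore pairs $\nu$ with a holomorphic quadratic differential on \emph{that} side. The diagonal value that must appear is the Schwarzian of the exterior map, not $S(f)$ on $\mathbb{D}$.

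Your constant bookkeeping also does not work. Passing to real parts doubles both variations, so this factor cancels. The $12$ must instead be the ratio of the two first-variation coefficients with respect to a common $(1,0)$-form $\vartheta$; in \cite{TT06} these are $\partial I^L=2\vartheta$ and $\partial\log\det(I-B_1B_1^*)=-\tfrac16\vartheta$. Each of these has to be computed, not matched by inspection.
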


\section{The composition operator}\label{sect: composition}

In this section, we recall a few properties of the composition operator associated with a quasisymmetric circle homeomorphism. 
\subsection{Basic facts}
Recall the block matrix representation of the composition operators following \cite{TT06} and \cite{Nag_Sullivan}:
\begin{align}
{\bf C}_{\vp}=\begin{pmatrix}
X & Y\\
\overline{Y} & \overline{X}
\end{pmatrix}, \quad {\bf C}_{\vp^{-1}}=\begin{pmatrix}
X^* & -Y^t\\
-Y^* & X^t
\end{pmatrix}.
\end{align}
Note that ${\bf C}_{\vp} {\bf C}_{\vp^{-1}} = {\bf C}_{\vp^{-1}}{\bf C}_{\vp}  = {\bf I}$ implies
\begin{align}\label{XX* and I}
    XX^* - YY^* = I, \qquad X^*X - Y^*Y = I,
\end{align}
and
\begin{align}\label{eq:symplectic 2}
    XY^t = YX^t, \qquad  X^* Y = Y^t \overline X. 
\end{align}

The following well-known lemma gives an explicit bound on the norm of the composition operator. Since other bounds have appeared in the recent literature (e.g., \cite{Nag_Sullivan}), we choose to provide a proof here following \cite{Ahlfors2006}.

\begin{lemma}\label{lem:quasiinvariance}
Let $\varphi \in \QS(\mathbb{S}^1)$. If $\vp$ extends to a $K$-quasiconformal homeomorphism of $\mathbb{D}$, then for every $u \in H^{1/2}_{\mathbb{R}}$,
\begin{equation}\label{eq:NS_bound}
\frac{1}{K}\mathcal{D}_{\mathbb{D}}(P[u]) \le \mathcal{D}_{\mathbb{D}}(P[{\bf C}_\vp u]) \le K \mathcal{D}_{\mathbb{D}}(P[u]). 
\end{equation}
In particular, for every $u \in H^{1/2}_\mathbb{C}$,
\[
\frac{1}{\sqrt{K}}\|u\|_{\dot H^{1/2}} \le \|{\bf C}_\vp u\|_{\dot H^{1/2}} \le \sqrt{K} \|u\|_{\dot H^{1/2}}.
\]
\end{lemma}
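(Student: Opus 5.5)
We prove \eqref{eq:NS_bound} through the extremal (Dirichlet) property of harmonic extensions combined with the quasi-invariance of the Dirichlet integral under quasiconformal maps, following Ahlfors. Let $\o : \mathbb{D} \to \mathbb{D}$ be a $K$-quasiconformal extension of $\vp$, and let $u \in H^{1/2}_\mathbb{R}$. First assume $u$ is smooth; the general case follows by density of $C^\infty$ in $H^{1/2}$ together with continuity of both sides.

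\textbf{Upper bound.} Put $U = P[u]$ and $W = U \circ \o$. Then $W$ is continuous on $\overline{\mathbb{D}}$ and lies in $W^{1,2}_{\mathrm{loc}}$, because quasiconformal maps preserve Sobolev regularity. Its boundary values are $u\circ\vp$. At a point where $\o$ is differentiable, write $z$ for the argument of $W$ and $\zeta = \o(z)$. The chain rule and the dilatation bound \eqref{f dilation0} give
\[
|\nabla W(z)|^2 \le |\nabla U(\zeta)|^2 (|\o_z|+|\o_{\bar z}|)^2 \le K\, |\nabla U(\zeta)|^2 J_\o(z),
\]
where $J_\o = |\o_z|^2 - |\o_{\bar z}|^2$. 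The second inequality holds because $(|\o_z|+|\o_{\bar z}|)^2 \le K (|\o_z|-|\o_{\bar z}|)(|\o_z|+|\o_{\bar z}|)$. Integrating over $\mathbb{D}$ and changing variables (valid since $\o$ is ACL, satisfies Lusin's (N) property, and has $J_\o>0$ a.e.), we get $\mathcal{D}_\mathbb{D}(W) \le K\,\mathcal{D}_\mathbb{D}(U)$.

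The harmonic extension minimizes the Dirichlet integral among all $W^{1,2}$ functions with the same boundary trace. Hence
\[
\mathcal{D}_\mathbb{D}(P[u\circ\vp]) \le \mathcal{D}_\mathbb{D}(W) \le K\,\mathcal{D}_\mathbb{D}(P[u]).
\]
This argument needs $W$ to have trace $u\circ\vp$. That holds because $W$ is continuous up to the boundary and has finite energy, so in particular $u\circ\vp \in H^{1/2}$.

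\textbf{Lower bound.} Apply the upper bound to $\vp^{-1}$, which has the $K$-quasiconformal extension $\o^{-1}$, and to the function $u\circ\vp$. Since constants do not affect the Dirichlet integral, replacing ${\bf C}_\vp u$ by $u \circ \vp$ is harmless.

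\textbf{Complex version.} Since $\mathcal{D}_\mathbb{D}(P[u]) = 2\|u\|^2_{\dot H^{1/2}}$ and ${\bf C}_\vp$ commutes with taking real and imaginary parts, split $u = u_1 + \ii u_2$. The identity $\|u\|^2_{\dot H^{1/2}} = \|u_1\|^2_{\dot H^{1/2}} + \|u_2\|^2_{\dot H^{1/2}}$ then gives the stated norm bounds.

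\textbf{Main obstacle.} The delicate step is justifying the change of variables and the trace identification for the non-smooth map $\o$. I would handle this by citing standard facts: quasiconformal maps are ACL, satisfy Lusin's (N) property, and are differentiable a.e. with $J_\o>0$ a.e. If needed, I would also approximate $u$ by trigonometric polynomials, which ensures that $W$ is Lipschitz in the variable $\zeta$.
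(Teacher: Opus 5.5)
Your proposal is correct and follows the paper's own argument (after Ahlfors). Both bound $|(v\circ\omega)_z|$ by $(|v_z|\circ\omega)(|\omega_z|+|\omega_{\bar z}|)$, combine this with the dilatation bound and a change of variables, apply the Dirichlet principle, use $\omega^{-1}$ for the lower bound, and split into real and imaginary parts. Your extra justifications (reduction to smooth $u$, the trace of $P[u]\circ\omega$ being $u\circ\vp$, Lusin's property) fill in details the paper leaves implicit, but in the density step the ``continuity'' of the left side is exactly the boundedness being proved, so it should be phrased as extending ${\bf C}_\vp$ by continuity from the dense subspace.
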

\begin{proof}
We extend $\varphi$ to a quasiconformal homeomorphism of $\mathbb{D}$ denoted $\omega$. Suppose that the maximal distortion of $\omega$ is $K$, i.e., for a.e.\ $z \in \mathbb{D}$,
\begin{align}\label{f dilation}    
\frac{|\omega_z| +|\omega_{\bar{z}}|}{|\omega_z| - |\omega_{\bar{z}}|} \le K.
\end{align}

Let $v=P[u]$ be the Poisson integral.
%Note that $|\nabla v|^2 = 4|v_z|^2$. 
Using the chain rule and the fact that $|\bar{\omega}_z|  = |\omega_{\bar{z}}|$ and $|v_{z}| = |v_{\bar z}| $, 
\begin{align*}
|(v\circ \omega)_z| & = |v_z \circ \omega \omega_z  + v_{\bar{z}} \circ \omega \overline{\omega}_z| \\
 & \le( |v_z| \circ \omega )|\omega_z|  + (|v_{\bar{z}}| \circ \omega) |\overline{\omega}_z| \\
 & = (|v_z| \circ \omega)(|\omega_z| + |\omega_{\bar{z}}|).
\end{align*}
Since the Jacobian $J(\omega,z) = |\omega_z|^2-|\omega_{\bar{z}}|^2$, it follows that
\begin{align*}
    \int_\mathbb{D}|(v\circ \omega)_z|^2 \dd^2 z & \le  \int_\mathbb{D}(|v_z|^2 \circ \omega)\frac{(|\omega_z| + |\omega_{\bar{z}}|)^2}{|\omega_z|^2-|\omega_{\bar{z}}|^2} J(\omega,z)\dd^2 z \\
   &  = \int_\mathbb{D}(|v_z|^2 \circ \omega)\frac{|\omega_z| + |\omega_{\bar{z}}|}{|\omega_z|-|\omega_{\bar{z}}|} J(\omega,z)\dd ^2z \\
   & \le K \int_{\mathbb{D}}|v_z|^2 \dd^2 z,
\end{align*}
where we used \eqref{f dilation} in the last step.

Note that for any real-valued function $h$, $|\nabla h|^2 = 4|h_z|^2$.
Hence, we proved:
 \[
 \mathcal{D}_{\mathbb{D}}(P[u]\circ \omega) \le K \mathcal{D}_{\mathbb{D}}(P[u]). 
 \]
 Now, by the Dirichlet principle, since $\vp = \omega$ on $\mathbb{S}^1$, we have
  \[
 \mathcal{D}_{\mathbb{D}}(P[u \circ \vp]) \le \mathcal{D}_{\mathbb{D}}(P[u]\circ \omega). 
 \]
Combining these bounds gives the estimate on the right-hand side of \eqref{eq:NS_bound}. For the estimate on the left, we use the fact that if $\vp$ can be extended to a $K$-quasiconformal map, then so can $\vp^{-1}$. The statement about the $H^{1/2}$-norms follows immediately, since $\|u\|_{\dot H^{1/2}}^2 = \|\Re u\|_{\dot H^{1/2}}^2 + \|\Im u\|_{\dot H^{1/2}}^2$.
    \end{proof}

    \subsection{Approximation lemma}
    We will use the following lemma repeatedly.
\begin{lemma}\label{lem: qs approximation}
    Let $\vp \in \QS(\mathbb{S}^1)$ and suppose that $\vp$ can be extended to a $K$-quasiconformal map of $\mathbb{C}$ fixing $0,1$, and $\infty$. There exists a sequence $\vp_n \in \Diff^\infty(\mathbb{S}^1)$ normalized as $\vp$ with the following properties.
    \begin{enumerate}
        \item  Each $\vp_n$ is $K_n$-quasisymmetric with $K_n \le K$, $\vp_n \to \vp$ uniformly, and $K_n \to K$ as $n \to \infty$. 
\item There exists $C$ depending only on $K$ such that for all $n$, $\vp_n$ and  $\vp_n^{-1}$ are $1/K$-H\"older continuous with constant $C$. 
\item For each fixed
$u \in \dot{H}^{1/2}$,
\[
\lim_{n \to \infty}\| {\bf C}_{\vp_n} u - {\bf C}_{\vp} u \|_{\dot{H}^{1/2}} =0.
\]
\item{ For each $k, \ell$, $\lim_{n \to \infty} \widehat{\oo}_{k,\ell}(\vp_n) = \widehat{\oo}_{k,\ell}(\vp)$, where $\widehat{\oo}_{k,\ell}$ denotes the corresponding Grunsky coefficient.}
    \end{enumerate}
        \end{lemma}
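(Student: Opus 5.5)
The plan is to build $\vp_n$ by smoothing a quasiconformal extension of $\vp$ in a controlled way, then derive (3) and (4) from uniform convergence combined with the uniform H\"older and Dirichlet bounds in (1)--(2).

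\textbf{Construction.} Let $\Phi$ be the $K$-quasiconformal extension of $\vp$ to $\m C$, fixing $0,1,\infty$. By reflection we may assume $\Phi$ commutes with $z\mapsto 1/\bar z$; this reflection does not increase the distortion. Let $\mu$ be the Beltrami coefficient of $\Phi$, with $\|\mu\|_\infty = k=(K-1)/(K+1)$.

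For $r_n\uparrow 1$, define $\mu_n$ to equal $\mu$ on the annuli $r_n^{-1}<|z|$ and $|z|<r_n$, reflected symmetrically, and $0$ near $\mathbb{S}^1$. Then $\|\mu_n\|_\infty\le k$, so the normalized solution $\Phi_n$ of the Beltrami equation is $K$-quasiconformal. It is symmetric under reflection, hence preserves $\mathbb{S}^1$, and it is conformal in a neighbourhood of $\mathbb{S}^1$. Therefore $\vp_n=\Phi_n|_{\mathbb{S}^1}$ is real-analytic, and in particular lies in $\Diff^\infty(\mathbb{S}^1)$, since it is a homeomorphism whose derivative is nonvanishing by conformality.

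Because $\mu_n\to\mu$ a.e. with uniformly bounded norm, the standard convergence theorem for normalized solutions gives $\Phi_n\to\Phi$ locally uniformly. Hence $\vp_n\to\vp$ uniformly.

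\textbf{Properties (1) and (2).} For (1), interpret $K_n$ as the optimal quasiconformal-extension constant. Then $K_n\le K$. Lower semicontinuity of the maximal dilatation under uniform limits (a limit of $K_n$-quasiconformal maps is $\liminf K_n$-quasiconformal) gives $\liminf K_n\ge K$, so $K_n\to K$.

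For (2), Mori's theorem applies to normalized $K$-quasiconformal maps of $\m C$ restricted to a compact set. It gives $|\Phi_n(z)-\Phi_n(w)|\le C|z-w|^{1/K}$ with $C=C(K)$. The same bound holds for $\Phi_n^{-1}$, which is also normalized and $K$-quasiconformal.

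\textbf{Property (3).} Lemma~\ref{lem:quasiinvariance} gives $\|{\bf C}_{\vp_n}\|\le\sqrt K$ uniformly in $n$, so it suffices to check (3) on a dense set such as trigonometric polynomials $u$. For such $u$, $u\circ\vp_n\to u\circ\vp$ uniformly, hence weakly in $L^2$, and $\|u\circ\vp_n\|_{\dot H^{1/2}}$ is bounded. Thus ${\bf C}_{\vp_n}u\rightharpoonup{\bf C}_\vp u$ weakly in $\dot H^{1/2}$.

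Strong convergence is the main obstacle. My approach is to pass to Dirichlet energies. Let $v=P[u]$, which is smooth up to the boundary. Then $v\circ\Phi_n\to v\circ\Phi$ in $W^{1,2}(\m D)$. This holds because the derivatives of $\Phi_n$ converge in $L^2_{\rm loc}$: the Beltrami coefficients converge a.e. and the Beurling-series representation is continuous in $L^p$, $p$ near $2$. The composition then converges by dominated convergence together with the area-distortion bounds.

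The harmonic extension is the $\dot H^{1/2}$-orthogonal projection of trace data, and the trace map is continuous from $W^{1,2}(\m D)$ to $\dot H^{1/2}$. Therefore $u\circ\vp_n = \mathrm{tr}(v\circ\Phi_n)\to u\circ\vp$ in $\dot H^{1/2}$.

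If that route proves delicate, I would instead prove norm convergence $\|{\bf C}_{\vp_n}u\|\to\|{\bf C}_\vp u\|$ and combine it with weak convergence. Equivalently, one can write the $\dot H^{1/2}$ norm via the Douglas formula $\iint |u(\vp_n(z))-u(\vp_n(w))|^2|z-w|^{-2}$. The integrand converges pointwise and is dominated using the uniform H\"older bound from (2), at least after splitting off the diagonal, where smallness must be shown uniformly in $n$. That uniform smallness near the diagonal is the real difficulty.

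\textbf{Property (4).} Set $L_n(z,w)=\log|(\vp_n(z)-\vp_n(w))/(z-w)|$. Then $L_n\to L$ pointwise off the diagonal. By (2) and Lemma~\ref{lem:Holder continuity implies L2}, $|L_n|\le C|\log|z-w||$ with $C$ independent of $n$, and this bound is integrable. Dominated convergence therefore gives convergence of each Fourier coefficient, $\widehat\oo_{k,\ell}(\vp_n)\to\widehat\oo_{k,\ell}(\vp)$.
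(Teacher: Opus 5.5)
Your construction and your arguments for (2) and (4) coincide with the paper's. You reflect the extension, cut off the Beltrami coefficient near $\mathbb{S}^1$, take normalized solutions, get uniform H\"older bounds from Mori's theorem, and apply dominated convergence with the bound $C(1+|\log|z-w||)$. Citing the convergence theorem for Beltrami coefficients is, in fact, what the paper's normal-family step needs to identify the limit as $\Phi$.

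The genuine difference is in (3). The paper notes that the entries $x_{k,\ell}(\vp_n)$ converge, asserts $\|{\bf C}_{\vp_n}u_m-{\bf C}_{\vp}u_m\|_{\dot H^{1/2}}\to0$ on truncations, and then runs the same density argument with the uniform bound $\sqrt K$ that you use. Each column ${\bf C}_{\vp_n}e_\ell$ is an infinite vector, so entrywise convergence plus a uniform bound only gives weak convergence. That is exactly the point you flag. Your argument supplies the missing tail control: you prove $W^{1,2}(\mathbb D)$-convergence of $P[u]\circ\Phi_n$ and then use the Dirichlet principle $2\|h\|^2_{\dot H^{1/2}}\le\mathcal D_{\mathbb D}(F)$ for real $F$ continuous on $\overline{\mathbb D}$ with boundary values $h$. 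The price is importing $L^2$-convergence of $D\Phi_n$.

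Since your $\mu$ is not compactly supported, the Neumann series does not apply verbatim. A clean fix is to factor $\Phi_n=G_n\circ\Phi$.
\begin{itemize}
\item $G_n$ is normalized and $K^2$-quasiconformal.
\item Its dilatation is supported in $\Phi(\{r_n\le|z|\le1/r_n\})$. These sets lie in a fixed compact set, and their areas tend to $0$.
\item Hence $DG_n\to I$ in $L^2(\mathbb D)$, so $\mathcal D_{\mathbb D}(v\circ G_n-v)\to0$ for $v=P[u]$ with $u$ a real trigonometric polynomial.
\item The computation in the proof of Lemma~\ref{lem:quasiinvariance} then gives $\mathcal D_{\mathbb D}(v\circ\Phi_n-v\circ\Phi)\le K\,\mathcal D_{\mathbb D}(v\circ G_n-v)\to0$.
\end{itemize}
With this, your Douglas-formula fallback is unnecessary.

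There is one slip in (1). If $K_n$ is the extremal extension constant of $\vp_n$, lower semicontinuity gives $\liminf K_n\ge K$ only when $K$ is the extremal constant of $\vp$. If $\mu$ vanishes near $\mathbb{S}^1$, then $\vp_n=\vp$ for large $n$, and $K_n$ need not approach $K$. Instead read $K_n$ as the maximal dilatation of $\Phi_n$, as the paper does. Then $K_n=(1+\|\mu_n\|_\infty)/(1-\|\mu_n\|_\infty)$ increases to the dilatation of $\Phi$, because $\operatorname{ess\,sup}_{|z|<r}|\mu|$ increases to $\|\mu\|_{L^\infty(\mathbb D)}$ as $r\uparrow1$.
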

\begin{proof}
   We follow the proof of Corollary~5.5.8 of \cite{astala2008elliptic} for Claim~$1$. Extend $\vp$ to a quasi-conformal homeomorphism $\omega$ of the plane, symmetric with respect to reflection in $\mathbb{S}^1$ and fixing $0,1,\infty$. Let $\mu$ be the Beltrami coefficient of $\omega$, which satisfies the reflection symmetry $\mu(z) = \overline{\mu(1/\overline{z})} z^2/\overline{z}^2$. 
    For each $\eps > 0$, let 
    \[
   \mu_\eps \coloneqq  \mu \, 1_{|z|<1-\eps}, \qquad z\in \mathbb{D},
    \]
    and define $\mu_\eps$ in $\mathbb{D}^*$ by reflection. Then $\mu_\eps$ converges pointwise to $\mu$ as $\eps \to 0$.
    For each $\eps$, let $\omega_\eps$ be the unique normalized solution to the Beltrami equation $\partial_{\bar{z}} \omega_\eps = \mu_\eps \partial_z \omega_\eps$ fixing $0,1$, and $\infty$. Note that $\omega_\eps$ fixes $\mathbb{S}^1$ and is analytic in a neighborhood of $\mathbb{S}^1$.
    Moreover, the maximal distortion of $\omega_\eps$, $K_\eps \le K$ by the uniform bound on the Beltrami coefficient. Since $K$-quasiconformal maps fixing $3$ points form a normal family (see \cite[pp170-171]{astala2008elliptic}), we may extract a subsequence $\eps_n \to 0$, as $n \to \infty$, such that $\omega_{\eps_n} \to \omega$ uniformly on compacts in $\mathbb{C}$. 
    We obtain the desired sequence by setting $\vp_n \coloneqq \omega_{\eps_n}\mid_{\mathbb{S}^1}$. 
    
     For Claim~$2$, note that each $\vp_n$ (and $\vp_n^{-1}$) is the restriction of a $K$-quasiconformal map of $\mathbb{C}$ fixing $\mathbb{D}$, so H\"older continuity follows from Corollary 3.10.3 of \cite{astala2008elliptic}.

    We now prove Claim $3$: for each $u \in \dot{H}^{1/2}$, ${\bf C}_{\vp_n}u \to {\bf C}_{\vp}u$, in $\dot{H}^{1/2}$, as $n \to \infty$. For this, note that by dominated convergence, the matrix representation converges element-wise; that is, for each $k,\ell$, we have $x_{k,\ell}(\vp_n) \to x_{k,\ell}(\vp)$. This implies that for each $m$, $\|{\bf C}_{\vp_n}u_m - {\bf C}_{\vp}u_m\|_{\dot H^{1/2}} \to 0$, where $u_m = P_m u$ is the projection on the first $2m$ coordinates. Moreover, since each $\vp_n$ is $K_n$-quasisymmetric, with $K_n \le K$, Lemma~\ref{lem:quasiinvariance} implies that the sequence ${\bf C}_{\vp_n}u$ is uniformly bounded in $\dot{H}^{1/2}$. We get
    \begin{align*}
    \|{\bf C}_{\vp_n}u - {\bf C}_{\vp}u\|_{\dot H^{1/2}} &\le \|{\bf C}_{\vp_n}(u-u_m)\|_{\dot H^{1/2}} + \|{\bf C}_{\vp_n}u_m - {\bf C}_{\vp}u_m\|_{\dot H^{1/2}} + \|{\bf C}_{\vp}(u_m-u)\|_{\dot H^{1/2}} \\
        & \le 2\sqrt{K}\|u-u_m\|_{\dot H^{1/2}} + \|{\bf C}_{\vp_n}u_m - {\bf C}_{\vp}u_m\|_{\dot H^{1/2}}.
    \end{align*}
Therefore,
\begin{align*}
   \limsup_{n \to \infty}  \|{\bf C}_{\vp_n}u - {\bf C}_{\vp}u\|_{\dot H^{1/2}} & \le 2\sqrt{K}\|u-u_m\|_{\dot H^{1/2}} + \limsup_{n \to \infty} \|{\bf C}_{\vp_n}u_m - {\bf C}_{\vp}u_m\|_{\dot H^{1/2}} \\
   & = 2\sqrt{K}\|u-u_m\|_{\dot H^{1/2}} \xrightarrow[]{m\to \infty} 0,
\end{align*}
which concludes the proof of the claim.

For Claim~$4$, we know that there exists $C$ depending only on the maximal distortion of $\vp$ such that uniformly in $n$, $\left|\log (|\vp_n(z) - \vp_n(w)|/|z-w| )\right|  \le C \left|\log|z-w| \right|$. Hence, the statement follows from the dominated convergence theorem.
\end{proof}

\subsection{The composition operator and Loewner energy}
Let $\vp$ be a circle homeomorphism, and we consider the block matrix representation of ${\bf C}_\vp$  as in \eqref{eq:composition-matrix1}.

\begin{lemma}\label{lem:composition-grunsky-closed}
 If $X$ is a closed operator on $\ell^2_+ = \ell_{\mathbb{Z}_+}^2$, then $X$ and $X^*$ are invertible. In particular, this holds if $\varphi \in \QS(\mathbb{S}^1)$. 
    \end{lemma}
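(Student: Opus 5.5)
The plan is to work directly from the identities \eqref{XX* and I}, namely $XX^* = I + YY^*$ and $X^*X = I + Y^*Y$. First we make sense of these when $X$ is only assumed closed. A priori $X$ is defined by its matrix entries $x_{k,\ell}$. When ${\bf C}_\vp$ is bounded, for instance for $\vp\in\QS(\mathbb{S}^1)$ by Lemma~\ref{lem:quasiinvariance}, the blocks $X,Y$ are bounded, hence closed, and both identities hold as identities of bounded operators. In the general closed case the plan is to interpret them in the quadratic-form sense on the domain, and we would first treat the bounded case carefully.

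\emph{Injectivity with closed range.} From $X^*X = I + Y^*Y$ we get, for every $u$ in the domain,
\[
\|Xu\|^2 = \|u\|^2 + \|Yu\|^2 \ge \|u\|^2 .
\]
So $X$ is bounded below. A closed operator that is bounded below is injective and has closed range: if $Xu_n \to v$, then $(u_n)$ is Cauchy, so it converges to some $u$, and closedness gives $Xu = v$. The same argument applied to $XX^* = I + YY^*$ shows that $\|X^*u\| \ge \|u\|$, so $X^*$ is also injective with closed range.

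\emph{Surjectivity.} Since $\operatorname{Ran}(X)^\perp = \ker X^* = \{0\}$ and $\operatorname{Ran}(X)$ is closed, $X$ is surjective. Being also injective, $X$ is a bijection onto $\ell^2_+$, and its inverse is bounded with $\|X^{-1}\| \le 1$. Symmetrically $X^*$ is invertible with $\|(X^*)^{-1}\| \le 1$. For the unbounded closed case we would invoke the standard fact that $\ker X^* = \operatorname{Ran}(X)^\perp$ holds for densely defined closed operators; density is available here because finite sequences lie in the domain.

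\emph{The main obstacle.} The delicate step is justifying the two identities (in particular the one for $X^*$) under only the closedness hypothesis. They come from ${\bf C}_\vp{\bf C}_{\vp^{-1}} = {\bf C}_{\vp^{-1}}{\bf C}_\vp = {\bf I}$, and composing unbounded matrices requires care. I would first verify $\|Xu\|^2 = \|u\|^2 + \|Yu\|^2$ for finitely supported $u$, by a direct computation from ${\bf C}_{\vp^{-1}}{\bf C}_\vp u = u$. I would then extend to the full domain by closedness, noting that finitely supported vectors form a core when the operator is defined as the closure of its restriction to them. The identity for $X^*$ would be handled by the same argument with $\vp^{-1}$ in place of $\vp$, using the block form of ${\bf C}_{\vp^{-1}}$ from \eqref{eq:composition-matrix1}.

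For $\vp\in\QS(\mathbb{S}^1)$ none of these issues arise, since everything is bounded, and the conclusion follows at once from the estimate in the first step together with the surjectivity argument.
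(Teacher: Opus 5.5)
Your proposal is correct and follows the paper's proof. The ingredients are the same: the lower bounds $\|Xu\|\ge\|u\|$ and $\|X^*u\|\ge\|u\|$ from \eqref{XX* and I}, closedness to get closed range (a step the paper uses only implicitly and you spell out), $\operatorname{Ran}(X)^\perp=\ker X^*=\{0\}$ for surjectivity, and Lemma~\ref{lem:quasiinvariance} with the closed graph theorem for the quasisymmetric case.

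One cosmetic correction, which changes nothing in your argument. The top-left block of ${\bf C}_{\vp^{-1}}{\bf C}_\vp={\bf I}$ actually gives $X^*X=I+Y^t\overline{Y}$; the displayed $X^*X-Y^*Y=I$ is a misprint. Hence $\|Xu\|^2=\|u\|^2+\|\overline{Y}u\|^2$, exactly as written in the paper's proof, rather than $\|u\|^2+\|Yu\|^2$.
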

\begin{proof}
           By \eqref{XX* and I} we have that
       \[
       \|Xu\|^2_{\ell^2_+} = \|u\|^2_{\ell^2_+} + \|\overline{Y} u\|^2_{\ell^2_+} \ge \|u\|^2_{\ell^2_+}  
       \]
       and
       \[
       \|X^*u\|_{\ell^2_+}^2 = \|u\|^2_{\ell^2_+} + \|Y^* u\|^2_{\ell^2_+} \ge \|u\|^2_{\ell^2_+}.
       \]
       This implies  $\textrm{ker}(X) = \textrm{ker}(X^*) =\{0\}$. In particular, $X$ is injective.

Now we use the fact that $X$ is a closed operator. If $X$ were not surjective, then there would be a non-zero $y \in X(\ell^2_+)^\perp$. This implies that $y$ also belongs to $\textrm{ker}(X^*)$, leading to a contradiction.  

If $\vp \in \QS$, then $X$ is a bounded operator by Lemma~\ref{lem:quasiinvariance}. The closed graph theorem implies that $X$ is a closed operator.
\end{proof}

We will need the following results for the proof of Theorem~\ref{thm:main-Q-with-Schatten}. Recall that an operator $O: \ell^2 \to \ell^2$ is said to belong to the $p$-Schatten class if $O$ is compact and the singular values of $O$ are in $\ell^p$. If $O$ is self-adjoint, the condition reduces to the eigenvalues being in $\ell^p$, see Chapter~I.1.4 of \cite{zhu2007operator}.
\begin{thm}
    [See \cite{HS12,Jones1999Grunsky,WeiMatsuzaki2023}]\label{thm:comp_compact_HS}
    Let  $\vp \in \QS(\mathbb{S}^1)$. We have that $Y$ is compact if and only if $\vp$ is symmetric and $Y$ is Hilbert--Schmidt if and only if $\vp$ is Weil--Petersson. More generally, $\vp \in \WP_p(\mathbb{S}^1)$ if and only if $Y$ is in the $p$-Schatten class. 
\end{thm}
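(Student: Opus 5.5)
The plan is to reduce all three equivalences to the corresponding known characterizations for the classical Grunsky operator $B_1$ of the welded curve, through the algebraic relation between ${\bf C}_\vp$ and the Grunsky matrices established in \cite{TT06} (the relation referred to in the introduction as Theorem~\ref{thm:relation between Grunsky and composition}). Let $(\gamma; f, g)$ be the normalized welding solution for $\vp$. The first step is to recall that relation. In \cite{TT06}, writing $\vp = g^{-1}\circ f|_{\m S^1}$, one expands $f^k$ and $g^{-\ell}$ on $\m S^1$ and compares with $(g\circ\vp)^{\cdot}$. This expresses the blocks $X, Y$ through $B_1,\dots,B_4$, and yields a formula of the form
\[
B_1 = \overline{Y X^{-1}} \quad(\text{up to transpose/conjugation conventions}),
\]
that is, the Kirillov--Yuriev--Nag--Sullivan period map. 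I would rederive this directly by writing $u\circ\vp$ for $u = e_\ell$ and projecting onto $H_\pm$. Since $\vp \in \QS(\m S^1)$, Lemma~\ref{lem:composition-grunsky-closed} shows that $X$ and $X^{-1}$ are bounded.

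The second step is to transfer Schatten-class membership. The classes $\mathcal S_p$ (and the compact operators) are two-sided ideals in the bounded operators. Since $Y = \overline{B_1}\,\overline{X}$, or the analogous identity, with $X$ and $X^{-1}$ bounded, it follows that $Y$ is compact, resp.\ in $\mathcal S_p$, if and only if $B_1$ is. By \eqref{XX* and I}, $I + YY^* = XX^*$, which is consistent with $B_1B_1^* = I - (XX^*)^{-1}$ and gives a second route via the identity $B_1B_1^* = YY^*(I+YY^*)^{-1}$. This identity shows directly that the singular values of $B_1$ and $Y$ satisfy $s_j(B_1) = s_j(Y)/\sqrt{1+s_j(Y)^2}$. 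That is a monotone bijection $[0,\infty)\to[0,1)$ with both directions Lipschitz near $0$, so $\ell^p$-summability and convergence to zero are preserved in both directions. I would use this singular-value identity as the cleanest argument, since it needs only \eqref{XX* and I} and the formula for $B_1$.

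The third step is to invoke the curve-side results. $B_1$ (equivalently $B_4$) is compact if and only if $\gamma$ is asymptotically conformal (Pommerenke, see \cite{Pom_uni}, and \cite{shen2007}). It is Hilbert--Schmidt if and only if $\gamma$ is Weil--Petersson, by the theorem from \cite{TT06} recalled in Section~\ref{sect:classical_grunsky}. It is in $\mathcal S_p$ if and only if $\gamma$ is a $p$-Weil--Petersson curve \cite{WeiMatsuzaki2023}. Finally, conformal welding identifies asymptotically conformal quasicircles with $\Sym(\m S^1)$ \cite{shen2007}, Weil--Petersson quasicircles with $\WP(\m S^1)$ \cite{shen13}, and $p$-WP curves with $\WP_p(\m S^1)$ \cite{WeiMatsuzaki2023}. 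Combining these gives the theorem.

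The main obstacle is the first step: getting the exact identity between $Y$, $X$ and $B_1$ with the correct conjugations and transposes, and justifying the term-by-term expansions for merely quasisymmetric $\vp$. For the latter I would first prove the identity for $\vp \in \Diff^\infty(\m S^1)$, where everything converges absolutely. I would then pass to general $\vp$ using Lemma~\ref{lem: qs approximation}: the strong convergence ${\bf C}_{\vp_n}\to{\bf C}_\vp$, together with entrywise convergence of the Grunsky coefficients of the welded curves under uniform convergence of normalized quasiconformal maps, gives entrywise identities. Since all operators involved are bounded, entrywise identities suffice.
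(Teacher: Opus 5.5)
Your argument is correct, but it takes a different route from the paper's. The paper's proof is purely by citation. The compact and Hilbert--Schmidt statements are quoted from Theorems~2.1 and~2.2 of \cite{HS12}, which are formulated directly for the block $Y$ of ${\bf C}_\vp$. For the Schatten classes, the paper observes that $[{\bf J},{\bf C}_\vp] = 2\ii\begin{pmatrix}0 & Y\\ -\overline{Y} & 0\end{pmatrix}$, so $Y\in\mathcal S_p$ exactly when this commutator is. It then invokes Theorem~1.4 of \cite{Jones1999Grunsky} together with Theorem~5.5 of \cite{WeiMatsuzaki2023}, and the welded curve never appears.

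You instead go through the curve. Theorem~\ref{thm:relation between Grunsky and composition} gives precisely $B_1 = Y\overline{X}^{-1}$, i.e.\ $Y = B_1\overline{X}$. Since $\overline{X}$ is bounded with bounded inverse (Lemma~\ref{lem:composition-grunsky-closed}), the two-sided ideal property places $Y$ and $B_1$ in the same class (compact, $\mathcal S_2$, $\mathcal S_p$). You then import the classical Grunsky characterizations and the welding dictionary.

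This ideal argument alone suffices. Your identity $B_1B_1^* = YY^*(I+YY^*)^{-1}$ is also correct. It follows from Lemma~\ref{lem:Grunsky_eq} with $B_2=(X^*)^{-1}$, or from $X^t\overline{X} = I+Y^*Y$, which is what ${\bf C}_{\vp^{-1}}{\bf C}_\vp={\bf I}$ actually yields; the second relation in \eqref{XX* and I} should be read as $X^*X - Y^t\overline{Y} = I$. However, the singular-value formula only makes literal sense once $Y$ is known to be compact. For the compactness equivalence, use spectral calculus or the ideal property instead.

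What your route buys:
\begin{itemize}
\item a single transparent mechanism for all three classes;
\item the explicit link $s_j(B_1)^2 = s_j(Y)^2/(1+s_j(Y)^2)$ to the reciprocal Fredholm eigenvalues, which the paper exploits later in Lemma~\ref{lem:singular value decomposition}.
\end{itemize}
What it costs is a longer chain of black boxes: three curve-side characterizations of the classical Grunsky operator, plus three welding correspondences ($\Sym$, $\WP$, $\WP_p$). For the $\mathcal S_p$ characterization, make sure your reference covers the whole range $p>1$. The paper, by contrast, cites statements already phrased in terms of $\vp$.

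Finally, the step you flag as the main obstacle is not needed. Proposition~II.5.1 of \cite{TT06} already holds for all $\vp\in\QS(\mathbb{S}^1)$, as recorded in Theorem~\ref{thm:relation between Grunsky and composition}, so no re-derivation or approximation argument is required.
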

\begin{proof}
    The statements about compact and Hilbert--Schmidt $Y$ are Theorems 2.1 and 2.2 of \cite{HS12}. Since the commutator of ${\bf C}_\vp$ and ${\bf J}$ satisfies
 \[
   [{\bf J},{\bf C}_\vp]=  2 \ii  \begin{pmatrix}
0 & Y\\
-\overline{Y} & 0
\end{pmatrix},
 \]
 Theorem~1.4 of \cite{Jones1999Grunsky} combined with Theorem 5.5 of \cite{WeiMatsuzaki2023} implies the claim about the $p$-Schatten class. 
\end{proof}

The next important result, due to Takhtajan and Teo, connects the classical Grunsky operators with the composition operator.
\begin{thm}[Proposition~II.5.1 of \cite{TT06}]\label{thm:relation between Grunsky and composition}
    Let $\vp \in \QS(\mathbb{S}^1)$. The classical Grunsky operators $B_j, j=1,\ldots, 4$, of the pair $(f,g)$ can be expressed using the composition operator:
    \begin{align*}
         \begin{pmatrix} B_1&  B_2\\B_3&  B_4 \end{pmatrix} = \begin{pmatrix} Y&  I\\I&  -Y^* \end{pmatrix}\begin{pmatrix} \overline X^{-1}&  0\\0&  (X^*)^{-1} \end{pmatrix}.
    \end{align*}
\end{thm}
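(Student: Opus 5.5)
The plan is to derive the identity from Faber polynomials and the welding relation $f = g\circ\vp$ on $\mathbb{S}^1$, reading off Fourier coefficients in the canonical basis \eqref{eq:basis}. I would first prove the identity for $\vp\in\Diff^\infty(\mathbb{S}^1)$, where the welded curve is smooth, $f,g$ extend smoothly to the closed disks, and all series below converge absolutely. I would then extend it to general $\vp\in\QS(\mathbb{S}^1)$ by Lemma~\ref{lem: qs approximation}.

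\emph{Faber expansions.} Differentiate the three Grunsky expansions in $z$ and compare residues. For $g$ this gives the Faber polynomials $\Phi_k$ of $g$, $k\ge1$, satisfying
\[
\Phi_k(g(z)) = z^k + \sum_{\ell\ge1} k\,a_{k,\ell}\,z^{-\ell},\qquad z\in\m D^*.
\]
The mixed expansion $\log\bigl((g(z)-f(w))/z\bigr)$ gives, for $w\in\m D$,
\[
\Phi_k(f(w)) = \sum_{\ell\ge0} k\,a_{k,-\ell}\,w^\ell .
\]
For $f$ one uses the Faber-type functions $\Psi_k$, namely the polynomials in $1/\zeta$ attached to $f$ at $0$. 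These satisfy
\[
\Psi_k(f(w)) = w^{-k} + \sum_{\ell\ge1} k\,a_{-k,-\ell}\,w^\ell \quad (w\in\m D),\qquad \Psi_k(g(z)) = \sum_{\ell\ge 1}k\,a_{-k,\ell}\,z^{-\ell}\quad (z\in\m D^*),
\]
up to a constant term that plays no role in $\dot H^{1/2}$. These identities are classical, and I would verify them by a residue computation.

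\emph{Restrict to $\mathbb{S}^1$ and use welding.} Since $f=g\circ\vp$ on the circle, for $\zeta\in\mathbb{S}^1$ we get
\[
\vp(\zeta)^k + \sum_{\ell\ge1}k\,a_{k,\ell}\,\vp(\zeta)^{-\ell} = \sum_{\ell\ge0}k\,a_{k,-\ell}\,\zeta^\ell .
\]
Divide by $\sqrt k$ and drop constants. The left-hand side becomes ${\bf C}_\vp\bigl(e_k+\sum_\ell (B_4)_{k\ell} f_\ell\bigr)$ and the right-hand side becomes $\sum_\ell (B_3)_{k\ell}e_\ell$. Now expand ${\bf C}_\vp e_k$ and ${\bf C}_\vp f_\ell$ in the basis using the block form \eqref{eq:composition-matrix1}, and separate positive from negative modes. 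The negative-mode equation is homogeneous, since the right side is holomorphic in $\m D$. It reads, in matrix form, $\overline Y$-terms $+\ \overline X$-terms $\cdot B_4 = 0$, which after transposition gives $B_4 = -Y^*(X^*)^{-1}$. The positive-mode equation gives $B_3 = (X^*)^{-1}$, after using $XX^*-YY^*=I$ or the symplectic relations \eqref{eq:symplectic 2} to simplify. Running the same argument with $\Psi_k$, with the roles of $\m D$ and $\m D^*$ exchanged, yields $B_1 = Y\overline X^{-1}$ and $B_2 = \overline X^{-1}$.

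Inverting $X$ and $X^*$ is legitimate by Lemma~\ref{lem:composition-grunsky-closed}. I would also use the symmetries $a_{-\ell,k}=a_{k,-\ell}$ and $N^t=N$-type symmetry of the Grunsky coefficients to match the index orders in $B_j$.

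\emph{Passage to quasisymmetric $\vp$.} This is the main obstacle. For $\vp\in\QS$ I would take $\vp_n$ from Lemma~\ref{lem: qs approximation}. The normalized welding solutions $(f_n,g_n)$ then converge locally uniformly to $(f,g)$, by normality of quasiconformal families, so each Grunsky coefficient converges. The entries of $X_n$ and $Y_n$ converge by dominated convergence, and the operators are uniformly bounded by Lemma~\ref{lem:quasiinvariance}. The lower bound $\|X_n^*u\|\ge\|u\|$ gives $\|(X_n^*)^{-1}\|\le1$ uniformly in $n$. From this, strong convergence of the inverses follows from strong convergence of $X_n^*$ via the identity $A_n^{-1}-A^{-1}=A_n^{-1}(A-A_n)A^{-1}$. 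Matrix entries of both sides of the identity therefore converge, which proves it for $\vp$. The delicate point is showing that the adjoints $X_n^*$ (and $\overline X_n$) converge strongly, not just entrywise. For this I would apply Claim~3 of Lemma~\ref{lem: qs approximation} to $\vp_n^{-1}$ together with the block form of ${\bf C}_{\vp^{-1}}$, so that the needed strong convergence comes from that of ${\bf C}_{\vp_n^{-1}}$.
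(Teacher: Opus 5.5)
The paper gives no proof of this statement; it is quoted from Takhtajan--Teo. A Faber-polynomial computation like yours is the natural route. Your four Faber identities are correct, and so are your conclusions $B_4=-Y^*(X^*)^{-1}$ and $B_1=Y\overline X^{-1}$.

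\textbf{The gap is in the off-diagonal blocks.} You conclude $B_3=(X^*)^{-1}$ and $B_2=\overline X^{-1}$. The statement asserts $B_3=\overline X^{-1}$ and $B_2=(X^*)^{-1}$, and your version is false in general. The cause is index bookkeeping. Consider the identity
\[
\sum_{\ell}(B_3)_{k\ell}\,e_\ell \;=\; k^{-1/2}\,{\bf R}_0(\Phi_k\circ f)\;=\;{\bf C}_\vp\Bigl(e_k+\sum_{\ell}(B_4)_{k\ell}f_\ell\Bigr).
\]
Here the Faber index $k$ is a row index of $B_3$. On the right it is a \emph{column} index for the matrix of ${\bf C}_\vp$, because ${\bf C}_\vp e_k=\sum_j X_{jk}e_j+\sum_j\overline Y_{jk}f_j$. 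Comparing $e_j$- and $f_j$-components therefore gives
\[
X+YB_4^t=B_3^t,\qquad \overline Y+\overline X B_4^t=0 .
\]
The second equation gives $B_4^t=-\overline X^{-1}\overline Y=-Y^*(X^*)^{-1}$, using $\overline XY^*=\overline YX^*$, which is the conjugate of $XY^t=YX^t$. Substituting into the first gives $B_3^t=(XX^*-YY^*)(X^*)^{-1}=(X^*)^{-1}$, so $B_3=\overline X^{-1}$. In the same way, the $\Psi_k$ identity gives $\overline X B_2^t=I$ and $B_1^t=YB_2^t$. Hence $B_2=(X^*)^{-1}$, and $B_1=(X^*)^{-1}Y^t=Y\overline X^{-1}$ by $X^*Y=Y^t\overline X$.

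The symmetry $B_2=B_3^t$ that you plan to use to match index orders cannot catch this slip, since both versions satisfy it. A concrete check is $\vp(z)=(z-a)/(1-az)$ with $a\in(0,1)$:
\begin{itemize}
\item The normalized welding is $g(z)=(z+a)/(1-a^2)$ and $f=g\circ\vp$, so $Y=0$ and $X$ is real orthogonal.
\item Since $\log\frac{g(z)-f(w)}{z}=\log(1-\vp(w)/z)-\log(1-a^2)$, one reads off $(B_3)_{k\ell}=x_{\ell,k}$, that is, $B_3=X^t=\overline X^{-1}$.
\item On the other hand $(X^*)^{-1}=X$, and $x_{1,2}=-\sqrt2\,a(1-a^2)\neq\sqrt2\,a(1-a^2)=x_{2,1}$, so $X\neq X^t$.
\end{itemize}

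\textbf{An aside on the approximation step.} It is correct in outline but unnecessary. For any $\vp\in\QS(\mathbb{S}^1)$:
\begin{itemize}
\item $\Phi_k\circ g-z^k$ and $\Psi_k\circ g$ are bounded holomorphic in $\mathbb{D}^*$ and continuous up to $\mathbb{S}^1$.
\item Their expansions converge in $\dot H^{1/2}$, because the rows of the contractions $B_4$ and $B_2$ lie in $\ell^2$.
\item ${\bf C}_\vp$ is bounded and acts by pointwise composition on continuous elements.
\end{itemize}
So you can apply ${\bf C}_\vp$ termwise and compare coefficients directly. This avoids any convergence argument for the welding solutions or for $(X_n^*)^{-1}$.
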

In the next result, note that $X$ and $Y$ are defined directly in terms of $\vp$. We therefore obtain our first formula for the Loewner energy, expressed in terms of $\vp$. 
\begin{proposition} 
\label{prop:IL_X_Y}
Let $\vp \in \QS(\mathbb{S}^1)$ and consider the block matrix representation of ${\bf C}_\vp$ as in \eqref{eq:composition-matrix1}. Then $\varphi \in \WP(\mathbb{S}^1)$ if and only if $YY^*$ is of trace class, in which case the Loewner energy of the welded curve satisfies
\[
I^L(\gamma) = 12 \log \det(XX^*) = 12 \log \det(I+ YY^*)
\]
 \end{proposition}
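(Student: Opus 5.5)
The plan is to read both claims off Theorem~\ref{thm:relation between Grunsky and composition}, which expresses the classical Grunsky operators through $X$ and $Y$, and combine it with the Takhtajan--Teo determinant formula \eqref{def:loewner energy grunsky}. The characterization is immediate. By Theorem~\ref{thm:comp_compact_HS}, $\vp \in \WP(\mathbb{S}^1)$ if and only if $Y$ is Hilbert--Schmidt. This holds if and only if $YY^*$ is trace class, since $\operatorname{tr}(YY^*) = \|Y\|_{HS}^2$, where the trace is computed in the standard basis (sum of nonnegative diagonal entries).

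For the formula, I would start from the relation $B_1 = Y\overline{X}^{-1}$ given by Theorem~\ref{thm:relation between Grunsky and composition}. Then
\[
B_1B_1^* = Y \overline{X}^{-1} (\overline{X}^{-1})^* Y^* = Y (X^t\overline{X})^{-1} Y^*.
\]
Conjugating the identity $X^*X - Y^*Y = I$ from \eqref{XX* and I} entrywise gives $X^t\overline{X} = I + Y^t\overline{Y}$. Hence
\[
B_1B_1^* = Y(I + Y^t\overline{Y})^{-1}Y^*.
\]

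To simplify this I would use \eqref{eq:symplectic 2}, the relation $X^*Y = Y^t\overline{X}$, or equivalently $Y\overline{X}^{-1} = (X^*)^{-1}Y^t$. Both inverses exist by Lemma~\ref{lem:composition-grunsky-closed}. This gives $B_1 = (X^*)^{-1}Y^t$. A cleaner route is to work directly with $I - B_1B_1^*$, where the goal is
\[
I - B_1B_1^* = (XX^*)^{-1}.
\]
Since $B_2 = (X^*)^{-1}$, Lemma~\ref{lem:Grunsky_eq} gives
\[
I - B_1B_1^* = B_2B_2^* = (X^*)^{-1}X^{-1} = (XX^*)^{-1},
\]
and this identity is immediate. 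Also $XX^* = I + YY^*$ by \eqref{XX* and I}. With $YY^*$ trace class, $XX^* - I$ is trace class and positive, so $\det(XX^*)$ is a well-defined Fredholm determinant. Likewise $B_1B_1^* = I - (I+YY^*)^{-1} = YY^*(I+YY^*)^{-1}$ is trace class.

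By multiplicativity of Fredholm determinants, $\det((XX^*)^{-1}) = \det(XX^*)^{-1}$. Multiplicativity applies because $(I+T)^{-1} = I - T(I+T)^{-1}$ with $T = YY^*$ trace class. Substituting into \eqref{def:loewner energy grunsky},
\[
I^L(\gamma) = -12\log\det(I - B_1B_1^*) = 12\log\det(XX^*) = 12\log\det(I + YY^*).
\]

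The main point needing care is the operator-theoretic bookkeeping, not the algebra. First, the matrices $B_j$ of Theorem~\ref{thm:relation between Grunsky and composition} must be indexed consistently with $X$ and $Y$ acting on $\ell^2_+$. Second, the determinants must be justified as trace-class perturbations of the identity. Both follow from the facts cited above, so the proof is essentially a direct combination of Theorem~\ref{thm:relation between Grunsky and composition}, Lemma~\ref{lem:Grunsky_eq}, \eqref{XX* and I} and the Takhtajan--Teo formula.
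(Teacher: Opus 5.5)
Your proposal is correct and is essentially the paper's proof: you read off $B_2=(X^*)^{-1}$ from the Takhtajan--Teo relation between the Grunsky and composition operators, use $B_2B_2^*=I-B_1B_1^*$ to get $I-B_1B_1^*=(XX^*)^{-1}$, substitute into \eqref{def:loewner energy grunsky}, and finish with $XX^*=I+YY^*$. The detour through $B_1B_1^*=Y(I+Y^t\overline{Y})^{-1}Y^*$ is unnecessary, while your trace-class justification of the determinants and your derivation of the equivalence with $\varphi\in\WP(\mathbb{S}^1)$ via Theorem~\ref{thm:comp_compact_HS} spell out details the paper leaves implicit.
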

\begin{proof}
By Theorem~\ref{thm:relation between Grunsky and composition} we have $XX^* = (B_2B_2^*)^{-1}$. On the other hand, Lemma~\ref{lem:Grunsky_eq}  shows that $B_2B_2^* = I-B_1B_1^*$. So, it follows from \eqref{def:loewner energy grunsky} that 
$$I^L(\g) = - 12 \log \det B_2  B_2^* = 12 \log \det XX^*,$$
and the second equality holds since $XX^*=I+YY^*$ by \eqref{XX* and I}. 
\end{proof}

\begin{lemma}\label{lem:singular value decomposition}
    Let $\vp \in \Sym(\mathbb{S}^1)$. Then there exist two unitary matrices $U$, $V$, and a diagonal matrix $D$ with $0 \leq D \le c$ for some $c = c(\varphi) \in [0,1)$ such that
    \[
        {\bf C}_{\vp} \begin{pmatrix} V &  0\\ 0 &  \overline{V} \end{pmatrix} =  \begin{pmatrix} U &  0\\ 0 &  \overline{U}  \end{pmatrix} \begin{pmatrix} (I-D^2)^{-1/2} &  D(I-D^2)^{-1/2}\\ D(I-D^2)^{-1/2} &  (I-D^2)^{-1/2}  \end{pmatrix}.
    \]
    Or equivalently,
    \[
    {\bf C}_{\vp} \begin{pmatrix} V &  V\\ \overline V &  - \overline V  \end{pmatrix} =  \begin{pmatrix} U &  U\\ \overline U &  - \overline U \end{pmatrix} \begin{pmatrix} (\frac{I+D}{I-D})^{1/2} &  0\\ 0 &  (\frac{I-D}{I+D})^{1/2}  \end{pmatrix}.
    \]
\end{lemma}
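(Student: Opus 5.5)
We plan to build the decomposition from the singular value decomposition of the compact operator $Y$, and then use the symplectic relations \eqref{XX* and I} and \eqref{eq:symplectic 2} to handle $X$. Since $\vp$ is symmetric, Theorem~\ref{thm:comp_compact_HS} gives that $Y$ is compact. We then diagonalize the compact self-adjoint operator $Y^*Y$, whose eigenvalues are nonnegative and accumulate only at $0$. By \eqref{XX* and I} we have $X^*X = I + Y^*Y$, so $|X| = (I+Y^*Y)^{1/2}$. We seek an orthonormal basis, encoded in a unitary $V$, and a diagonal $D$ with $0\le D<I$, such that $V^*(I+Y^*Y)V = (I-D^2)^{-1}$. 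This amounts to setting $D^2(I-D^2)^{-1} = \Sigma^2$, where $\Sigma$ is the diagonal of singular values of $Y$. The bound $D\le c<1$ follows from boundedness of $Y$: it gives $\|\Sigma\|<\infty$, and hence $\sup D = \|\Sigma\|/\sqrt{1+\|\Sigma\|^2} <1$.

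Next we define $U \coloneqq XV(I-D^2)^{1/2}$. This operator is an isometry because $(I-D^2)^{1/2}V^*X^*XV(I-D^2)^{1/2} = I$. By Lemma~\ref{lem:composition-grunsky-closed}, $X$ is invertible, so $U$ is surjective and therefore unitary. With this choice the $(1,1)$ block of the desired identity holds, namely $XV = U(I-D^2)^{-1/2}$.

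The remaining step, which we expect to be the main obstacle, is the off-diagonal block $Y\overline V = U D (I-D^2)^{-1/2}$. The bottom row then follows by complex conjugation. Substituting $U$, the target becomes
\[
Y\overline V = XVD(I-D^2)^{-1/2}(I-D^2)^{1/2} = XVD,
\]
that is, $X^{-1}Y\overline V = VD$. The symplectic relation $X^*Y = Y^t\overline X$ from \eqref{eq:symplectic 2} shows that $Z\coloneqq X^{-1}Y$ is complex symmetric: $Z^t = Y^t (X^t)^{-1}$, and the relation gives $XY^t=YX^t$, hence $Y^t(X^t)^{-1} = X^{-1}Y$. The operator $Z$ is also compact, and $Z^*Z = Y^*(XX^*)^{-1}Y = Y^*(I+YY^*)^{-1}Y = Y^*Y(I+Y^*Y)^{-1}$. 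So the singular values of $Z$ are exactly the entries of $D$, and $Z^*Z$ is diagonalized by the same $V$ as $Y^*Y$. For a compact complex symmetric operator, the Takagi factorization $Z = V D V^t$ holds with $V$ unitary, and this is precisely $Z\overline V = VD$. We must therefore choose $V$ as a Takagi basis for $Z$ rather than an arbitrary eigenbasis of $Y^*Y$. We will prove the Takagi factorization on each finite-dimensional eigenspace of $Z^*Z$ with positive eigenvalue, where $Z$ acts as an antilinear map $v\mapsto Z\overline v$ whose square is $Z\overline Z=\overline{Z^*Z}$-compatible, so an orthonormal basis with $Z\overline{v}=\delta v$ exists. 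On the kernel, which may be infinite dimensional, any orthonormal basis works, and there $D=0$ and $Y\overline V=0$.

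Since $V$ still diagonalizes $Y^*Y$, the earlier construction of $U$ goes through unchanged. The second form of the identity follows by right-multiplying by $\frac{1}{1}\begin{pmatrix} I & I\\ I & -I\end{pmatrix}$, which diagonalizes the symmetric block matrix with eigenvalues $(I\pm D)(I-D^2)^{-1/2} = \bigl(\frac{I\pm D}{I\mp D}\bigr)^{1/2}$.
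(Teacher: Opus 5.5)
Your final construction is the paper's: Takagi-factor the compact complex-symmetric operator $Z = X^{-1}Y$ as $Z = VDV^t$, set $U = XV(I-D^2)^{1/2}$, and read off $Y\overline V = XZ\overline V = XVD$. The flaw is in how you verify that $U$ is unitary.

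\textbf{Which operator the Takagi $V$ diagonalizes.} From $Z = VDV^t$ you get $ZZ^* = VD^2V^*$, but $Z^*Z = \overline V D^2 V^t$. So the Takagi $V$ diagonalizes $ZZ^* = Z\overline Z$, not $Z^*Z$, and not $Y^*Y$. Correspondingly, the antilinear map $v \mapsto Z\overline v$ commutes with its square $ZZ^*$. It therefore preserves the eigenspaces of $ZZ^*$, but in general not those of $Z^*Z = \overline{ZZ^*}$. The finite-dimensional Takagi step must be run on eigenspaces of $ZZ^*$. The $\delta=0$ block must be an orthonormal basis of $\ker Z^* = \ker ZZ^*$, so that $Z\overline v = 0$, and not of $\ker Z^*Z = \ker Z$.

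\textbf{The identity $X^*X = I + Y^*Y$.} You take this from the second identity of \eqref{XX* and I} as printed, but that identity is a misprint. Multiplying out ${\bf C}_{\vp^{-1}}{\bf C}_\vp = {\bf I}$ with the displayed block forms gives $X^*X - Y^t\overline Y = I$. This is what the proof of Lemma~\ref{lem:composition-grunsky-closed} actually uses, via $\|Xu\|^2 = \|u\|^2 + \|\overline Y u\|^2$. Moreover, $Y^t\overline Y = \overline{Y^*Y}$ differs from $Y^*Y$ in general; this already fails at second order for perturbations of the identity.

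The two slips cancel: $V$ does diagonalize $Y^t\overline Y$, and hence $X^*X$. So your conclusion is true, but the argument as written does not establish it.

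The repair is the paper's one-line argument, which never involves $Y^*Y$. From $XX^* - YY^* = I$ alone,
$ZZ^* = X^{-1}(XX^* - I)(X^*)^{-1} = I - (X^*X)^{-1}$,
hence $X^*X = (I - ZZ^*)^{-1} = V(I-D^2)^{-1}V^*$. This gives $U^*U = I$ directly. It also gives the bound $D \le c < 1$ from the boundedness of $X^*X$, so your preliminary singular value decomposition of $Y$ becomes unnecessary.

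The rest of your argument is fine: surjectivity of $U$ from the invertibility of $X$ (a point the paper leaves implicit), the conjugate bottom row, and the passage to the second form.
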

\begin{proof}
 In \cite{HS12} it is shown that the matrix $Y$ is 
    compact
    if and only if $\vp$ is symmetric.
    We define the matrix $S = X^{-1}Y$, which is symmetric since $S^t = Y^t (X^{-1})^t = X^{-1}Y$, as $XY^t = Y X^t$.
Since the matrix $S = X^{-1}Y$ is also compact, there exist a unitary matrix $V$ and a diagonal matrix $D \geq 0$ such that $S= V D V^t$ (see, e.g., \cite[Ch.~2, Lem.~2.12]{TT06}).
Since 
$$I = XX^*- YY^* = XX^*- XSS^*X^*= X(I- SS^*)X^*,$$ 
we have $X^*X= (I-SS^*)^{-1} = V(I-D^2)^{-1}V^*$. 
Since $X^* X$ is bounded and positive, we know that there exists $c<1$ such that $D \le c$. Define $U = XV(I-D^2)^{1/2}$, which is a unitary matrix, as
\[
    U^* U = (I-D^2)^{1/2} V^* X^*XV(I-D^2)^{1/2}=  I.
\]
It follows that 
$$Y \overline V= XS \overline V= X V D V^t \overline V = XVD = UD(I-D^2)^{-1/2}$$
 and 
 $$X V = U (I-D^2)^{-1/2}.$$
From this, we obtain 
 \[
        {\bf C}_{\vp} \begin{pmatrix} V &  0\\ 0 &  \overline{V} \end{pmatrix} = \begin{pmatrix} X &  Y\\ \overline Y &  \overline X \end{pmatrix} \begin{pmatrix} V &  0\\ 0 &  \overline{V} \end{pmatrix}  =  \begin{pmatrix} U &  0\\ 0 &  \overline{U}  \end{pmatrix} \begin{pmatrix} (I-D^2)^{-1/2} &  D(I-D^2)^{-1/2}\\ D(I-D^2)^{-1/2} &  (I-D^2)^{-1/2}  \end{pmatrix} 
    \]
 as claimed. Multiplying this equality by 
$\begin{psmallmatrix} I &  I\\ I & -I \end{psmallmatrix}$ on the right and using 
$$ \begin{pmatrix} (I-D^2)^{-1/2} &  D(I-D^2)^{-1/2}\\ D(I-D^2)^{-1/2} &  (I-D^2)^{-1/2}  \end{pmatrix} \begin{pmatrix} I &  I\\ I & -I \end{pmatrix} = \begin{pmatrix} I &  I\\ I & -I \end{pmatrix} \begin{pmatrix} (\frac{I+D}{I-D})^{1/2} &  0\\ 0 &  (\frac{I-D}{I+D})^{1/2}  \end{pmatrix}, $$
we obtain the second equality.
\end{proof}

\begin{lemma}\label{lem:singular_value}
    Let  $\vp \in \Sym(\mathbb{S}^1)$. Then, there exist two unitary matrices $U$, $V$, and a diagonal matrix $D$ with $0 \leq D < c$ for some $c = c(\varphi) \in [0,1)$ such that
        \begin{align*}
         \begin{pmatrix} B_1&  B_2\\B_3&  B_4 \end{pmatrix} \begin{pmatrix} \overline U&  0\\ 0 &  V \end{pmatrix}= \begin{pmatrix} U&  0\\0& \overline V \end{pmatrix}\begin{pmatrix} D&  (I-D^2)^{1/2}\\  (I-D^2)^{1/2} &  -D\end{pmatrix}.
        \end{align*}
        In particular, the nonzero entries of $D$ are the singular values of $B_1$, and are the inverse of Fredholm eigenvalues associated with the welded quasicircle.
\end{lemma}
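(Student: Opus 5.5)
The plan is to plug the factorization of ${\bf C}_\vp$ from Lemma~\ref{lem:singular value decomposition} into the block formula of Theorem~\ref{thm:relation between Grunsky and composition} and compute each block. Lemma~\ref{lem:singular value decomposition} gives unitaries $U,V$ and a diagonal $0\le D\le c<1$ with
\[
XV = U(I-D^2)^{-1/2}, \qquad Y\overline V = UD(I-D^2)^{-1/2}.
\]
Here $D$ comes from the Takagi factorization $S = X^{-1}Y = VDV^t$. From these identities we read off
\[
\overline X^{-1} = \overline V (I-D^2)^{1/2}\, U^t, \qquad (X^*)^{-1} = U(I-D^2)^{-1/2}V^*.
\]
The second uses $X^* = V(I-D^2)^{-1/2}U^*$, obtained by taking adjoints of the first identity. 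Since $X$ is invertible by Lemma~\ref{lem:composition-grunsky-closed}, these inverse formulas are legitimate.

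Next, substitute into the formula of Theorem~\ref{thm:relation between Grunsky and composition}. For the four blocks we get:
\begin{itemize}
\item $B_1 = Y\overline X^{-1} = UD(I-D^2)^{-1/2}\,(I-D^2)^{1/2}U^t = UDU^t$, hence $B_1\overline U = UD$.
\item $B_3 = \overline X^{-1}$, hence $B_3\overline U = \overline V(I-D^2)^{1/2}$.
\item $B_2 = (X^*)^{-1}$, hence $B_2V = U(I-D^2)^{1/2}$.
\item $B_4 = -Y^*(X^*)^{-1}$.
\end{itemize}

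The step needing the most care is $B_4$ and the bookkeeping of $U,\overline U,V,\overline V$. Taking adjoints of $Y\overline V = UD(I-D^2)^{-1/2}$ gives $Y^* = \overline V D(I-D^2)^{-1/2}U^*$, using that $D$ is real diagonal. Then
\[
B_4V = -\overline V D(I-D^2)^{-1/2}U^*\,U(I-D^2)^{-1/2}V^*V = -\overline V D(I-D^2)^{-1}.
\]
This is suspicious: the claimed entry is $-\overline V D$. So I must recheck the formula for $(X^*)^{-1}$. From $XV = U(I-D^2)^{-1/2}$ we get $X^{-1} = V(I-D^2)^{1/2}U^*$, hence $(X^*)^{-1} = U(I-D^2)^{1/2}V^*$. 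With this correction, $B_2V = U(I-D^2)^{1/2}$ still holds, and
\[
B_4V = -\overline V D(I-D^2)^{-1/2}(I-D^2)^{1/2} = -\overline V D.
\]
Collecting the four blocks in matrix form gives exactly the claimed identity. The bound $D\le c<1$ is inherited directly from Lemma~\ref{lem:singular value decomposition}.

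For the final assertions, $B_1\overline U = UD$ says $B_1 = UDU^t$ with $U$ unitary. This is a Takagi/singular value decomposition, so the nonzero diagonal entries of $D$ are the singular values of $B_1$. The identification with inverse Fredholm eigenvalues follows from the classical fact, due to Schiffer and recorded in \cite{TT06, Schiffer1957}, that the singular values of the Grunsky operator $B_1$ of the welded quasicircle are the reciprocals of its Fredholm eigenvalues. I would simply cite this rather than reprove it.
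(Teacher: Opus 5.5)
Your proposal is correct and is essentially the paper's argument: the paper also substitutes $XV = U(I-D^2)^{-1/2}$ and $Y\overline V = UD(I-D^2)^{-1/2}$ from Lemma~\ref{lem:singular value decomposition} into Theorem~\ref{thm:relation between Grunsky and composition}, reads off $B_1 = UDU^t$ (so $B_1B_1^* = UD^2U^*$), and cites \cite{TT06} for the Fredholm eigenvalue interpretation. In a clean write-up, state $(X^*)^{-1} = U(I-D^2)^{1/2}V^*$ from the outset (your first formula was $X$ itself, which also contradicts the $B_2V$ you wrote next), and note that $D \le c<1$ trivially gives $D < c'$ for some $c' \in [0,1)$.
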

\begin{rem}
The identity follows easily by combining Lemma~\ref{lem:singular value decomposition} and Theorem~\ref{thm:relation between Grunsky and composition}.
In particular, $B_1 = U D U^t$, hence $B_1 B_1^* = U D^2 U^*$. This shows that the eigenvalues of $D$ are the singular values of $B_1$.

Therefore, our matrix $D$ is the same as that defined on page 80 of \cite{TT06}, where the same identity is proved and the relation between $D$ and Fredholm eigenvalues of the welded quasicircle is pointed out. The relation between the Grunsky operator $B_1$ and the Fredholm eigenvalues was first shown in \cite{Schiffer1981} for $C^3$ curves.  
\end{rem}

 From the proof of Lemma~\ref{lem:singular value decomposition}, we also see that 
    $$X = U(I-D^2)^{-1/2}V^*.$$
    From this, we immediately obtain the following corollary of Proposition~\ref{prop:IL_X_Y}.
\begin{corollary}\label{cor:energy_D}
     If $\vp \in \WP(\mathbb{S}^1)$, then
     $$I^L(\g) = 12 \log \det (XX^*) = 12 \log \det (U (I-D^2)^{-1} U^*) = - 12 \log \det (I - D^2).$$
\end{corollary}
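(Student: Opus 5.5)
The plan is to start from Proposition~\ref{prop:IL_X_Y}, which gives $I^L(\g) = 12 \log \det(XX^*)$ for $\vp \in \WP(\mathbb{S}^1)$, and to rewrite $XX^*$ using the factorization of $X$ extracted from the proof of Lemma~\ref{lem:singular value decomposition}. That lemma applies because $\WP(\mathbb{S}^1) \subset \Sym(\mathbb{S}^1)$. It provides unitaries $U,V$ and a diagonal $0 \le D \le c < 1$ with $X V = U(I-D^2)^{-1/2}$, so that $X = U(I-D^2)^{-1/2}V^*$.

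The key steps, in order, are as follows.
\begin{enumerate}
\item Compute the product directly:
\[
XX^* = U(I-D^2)^{-1/2}V^*V(I-D^2)^{-1/2}U^* = U(I-D^2)^{-1}U^*.
\]
This uses only $V^*V = I$ and the fact that the diagonal factor is self-adjoint.
\item Check that the determinant makes sense. Since $XX^* = I + YY^*$ by \eqref{XX* and I}, and $YY^*$ is trace class for $\vp \in \WP(\mathbb{S}^1)$ by Theorem~\ref{thm:comp_compact_HS} (which gives $Y$ Hilbert--Schmidt), the Fredholm determinant $\det(XX^*)$ is well defined.
\item Move the determinant through the unitary conjugation. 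Write
\[
U(I-D^2)^{-1}U^* = I + U\big((I-D^2)^{-1}-I\big)U^*.
\]
The operator $(I-D^2)^{-1}-I = D^2(I-D^2)^{-1}$ is trace class because it is unitarily equivalent to $YY^*$. Invariance of Fredholm determinants under unitary conjugation then gives $\det(XX^*) = \det\big((I-D^2)^{-1}\big)$.
\item Conclude. Since $D^2$ is trace class with $\|D\| \le c < 1$, the operator $I - D^2$ is invertible, and multiplicativity of $\det$ on $I+\text{trace class}$ yields $\det\big((I-D^2)^{-1}\big) = \det(I-D^2)^{-1}$. Taking $12\log$ gives $-12\log\det(I-D^2)$.
\end{enumerate}

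The argument is essentially algebraic. The only point needing care is confirming that each intermediate operator is of the form identity plus trace class, so that the determinant identities are legitimate. The real input, that $Y$ is Hilbert--Schmidt exactly for Weil--Petersson $\vp$, has already been supplied by Theorem~\ref{thm:comp_compact_HS}.
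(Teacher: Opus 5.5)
Your proposal is correct and follows essentially the same route as the paper. The paper reads off $X = U(I-D^2)^{-1/2}V^*$ from the proof of Lemma~\ref{lem:singular value decomposition} and obtains the identities immediately from Proposition~\ref{prop:IL_X_Y}. You do the same, and in addition make explicit the trace-class bookkeeping (via $XX^* - I = YY^*$) that justifies taking the determinants.
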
 

We also obtain another proof of the invariance of $I^L$ under $\vp \mapsto \vp^{-1}$.
\begin{corollary}\label{cor:energy_inversion}
    If $\vp \in \operatorname{Sym}(\mathbb{S}^1)$, then the $D$-matrix associated with $\vp^{-1}$ coincides with that of $\vp$. In particular, if $\vp \in \WP(\m S^1)$, then $I^L(\vp) = I^L(\vp^{-1})$. 
\end{corollary}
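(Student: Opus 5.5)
We derive the decomposition for $\vp^{-1}$ directly from the one for $\vp$ in Lemma~\ref{lem:singular value decomposition}. Write ${\bf C}_\vp$ as in \eqref{eq:composition-matrix1}. From the proof of that lemma we have $X = U(I-D^2)^{-1/2}V^*$ and $Y = XVDV^t = U D (I-D^2)^{-1/2} V^t$, with $U,V$ unitary and $0\le D\le c<1$ diagonal. The block matrix \eqref{eq:composition-matrix1} gives the composition operator of $\vp^{-1}$ explicitly. Its blocks are
\[
\tilde X = X^* = V(I-D^2)^{-1/2}U^*, \qquad \tilde Y = -Y^t = -V D(I-D^2)^{-1/2}U^t.
\]
Since $\vp^{-1}$ is again symmetric, Lemma~\ref{lem:singular value decomposition} applies to it. The task is to show that its diagonal matrix $\tilde D$ equals $D$, up to the harmless sign and permutation ambiguities.

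I would identify $\tilde D$ through $\tilde S = \tilde X^{-1}\tilde Y$, exactly as in that proof. We have $\tilde X^{-1} = U(I-D^2)^{1/2}V^*$, hence
\[
\tilde S = -U(I-D^2)^{1/2}V^* V D (I-D^2)^{-1/2} U^t = -U D U^t = (\ii U) D (\ii U)^t .
\]
This is a Takagi factorization $\tilde S = \tilde V D \tilde V^t$ with the unitary $\tilde V = \ii U$ and the same diagonal $D\ge 0$. As a check, $\tilde S$ matches $B_1 = UDU^t$ up to sign, consistent with Lemma~\ref{lem:singular_value}.

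The one point of care is well-definedness. The diagonal $D$ in the lemma is not unique as a matrix, but its entries are determined: they are the singular values of $S$, i.e.\ the square roots of the eigenvalues of $SS^*$, counted with multiplicity, up to ordering. Since $\tilde S\tilde S^* = UD^2U^*$ and $SS^* = VD^2V^*$ are unitarily equivalent, $\tilde D = D$ in this sense. One can alternatively check $\tilde U = \tilde X\tilde V(I-D^2)^{1/2} = V(I-D^2)^{-1/2}U^*\,\ii U (I-D^2)^{1/2} = \ii V$, which is unitary. So the whole decomposition for $\vp^{-1}$ is obtained by swapping the roles of $U$ and $V$, up to phases.

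Finally, if $\vp\in\WP(\m S^1)$, then $\vp^{-1}\in\WP(\m S^1)$, since $\WP(\m S^1)$ is a group. Equivalently, $\tilde Y = -Y^t$ is Hilbert--Schmidt whenever $Y$ is, by Theorem~\ref{thm:comp_compact_HS}. Corollary~\ref{cor:energy_D} then gives
\[
I^L(\vp^{-1}) = -12\log\det(I-\tilde D^2) = -12\log\det(I-D^2) = I^L(\vp).
\]
Alternatively, one can bypass $D$ entirely: $\det(\tilde X\tilde X^*) = \det(X^*X) = \det(XX^*)$, because the Fredholm determinant of $I+Y^*Y$ equals that of $I+YY^*$.
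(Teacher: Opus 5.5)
Your proof is correct and is essentially the paper's argument. The paper also reads off $\tilde D$ from $\tilde X^{-1}\tilde Y = -(X^*)^{-1}Y^t = -(Y\overline X^{-1})^t = -B_1^t$ and then uses $B_1 = UDU^t$, via Theorem~\ref{thm:relation between Grunsky and composition} and Lemma~\ref{lem:singular_value}. You instead get $-UDU^t$ directly from the explicit formulas for $X$ and $Y$, which skips the detour through the classical Grunsky operators; your alternative for the energy statement, $\det(X^*X)=\det(I+Y^*Y)=\det(I+YY^*)=\det(XX^*)$ together with Proposition~\ref{prop:IL_X_Y}, is also valid and avoids $D$ entirely.
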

\begin{proof}
    Let $\vp \in \operatorname{Sym}(\mathbb{S}^1)$. We write $\widetilde D$ for the diagonal matrix of singular values introduced in Lemma~\ref{lem:singular value decomposition}, but associated with $\vp^{-1}$. From the expression of ${\bf C}_{\vp^{-1}}$ (see \eqref{eq:composition-matrix1}), the diagonal matrix $\widetilde D$ has entries equal to the singular values of 
    $$- (X^*)^{-1} Y^t = - (Y \overline X^{-1})^t = - (B_1)^t$$
    which has the same singular values as $B_1$, and therefore the same singular values as the $D$ matrix associated with $\vp$ (by Lemma~\ref{lem:singular_value}). 
\end{proof}

\section{The Grunsky operator of a circle homeomorphism}
\subsection{Definition}\label{sect:grunsky-defs}
We assume $\vp \in \QS(\mathbb{S}^1)$ throughout this section. Recall the definition of the Grunsky matrix associated with $\vp$,
 \begin{equation}\label{def:matrix-repr-of-Q}
 \OO_\varphi \coloneqq  \begin{pmatrix}
M & N\\
\overline{N} & \overline{M}
\end{pmatrix}
\end{equation}
where $M$ and $N$ are defined by
\[
 \left(M_{k,\ell}\right)_{k, \ell \ge 1}=\left( \oo_{k, -\ell}\right)_{k, \ell \ge 1}, \qquad \left(N_{k,\ell}\right)_{k, \ell \ge 1}=\left( \oo_{k,\ell} \right)_{k, \ell \ge 1}.
\]
and $\oo_{k, \ell} = \sqrt{|k\ell|} \, \widehat{\oo}_{k, \ell}$, where $\widehat \oo_{k, \ell}$ are the Fourier coefficients of $\log(\left|\vp(z) -\vp(w) \right|\!/\!\left|z-w\right|)$ as discussed in Section~\ref{sect: Fourier and H12}.

\begin{rem}
Note that it follows directly from the definition that for all $k, \ell \neq 0$, we have 
\begin{equation} \label{eq:oo_self_adjoint}
\oo_{-k, -\ell} = \overline{\oo_{k,\ell}} \quad \text{ and } \quad  \oo_{k,\ell}=\oo_{\ell,k}. 
\end{equation} 
So we can also write $(\OO_\varphi)_{k,\ell \in \m Z^*} = (\oo_{k, -\ell})_{k,\ell \in \m Z^*}$.
We have defined 
 $\OO_\vp$ 
using Fourier coefficients with \emph{mixed} signs to ensure $\OO_\vp = \OO_\vp^*$.
\end{rem}
 \begin{rem}
    We could have defined the Grunsky coefficients for $\vp$ using Fourier coefficients of $\log(\vp(z) -\vp(w))/(z-w)$. In this case, one needs to suitably choose a single-valued continuous branch. This can be done as in Proposition~2.1 of \cite{AstalaIwaniecPrauseSaksman2015} after extending $\vp$ to a normalized quasiconformal homeomorphism of the plane. However, working with the real part is sufficient for our purposes and slightly simpler. 
\end{rem}

\bigskip

We first check how the matrix $\OO_\vp$ changes under post-composition by $\mob(\m S^1)$ by direct computation.  
\begin{lemma}
\label{lem:mob}
    If $\mu \in \mob(\m S^1)$, then 
    ${\bf C}_\mu$ is unitary, and $\OO_\mu = 0$. For all $\varphi \in \QS (\m S^1)$, we have $\OO_\varphi = \OO_{\mu \circ \varphi}$.
    Therefore, $\vp \mapsto \OO_\vp$ descends to a map on the universal Teichmüller space $T(1)$. 
\end{lemma}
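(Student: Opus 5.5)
The plan is to treat the three claims of Lemma~\ref{lem:mob} in order: ${\bf C}_\mu$ is unitary, $\OO_\mu = 0$, and $\OO_{\mu\circ\vp} = \OO_\vp$. All three rest on the explicit form of $\mu \in \mob(\m S^1)$. Write $\mu(z) = \lambda \frac{z-a}{1-\bar a z}$ with $|a|<1$ and $|\lambda|=1$. A direct computation gives, for $z,w \in \m S^1$,
\[
\frac{\mu(z)-\mu(w)}{z-w} = \lambda\frac{1-|a|^2}{(1-\bar a z)(1-\bar a w)}, \qquad\text{hence}\qquad
\log\left|\frac{\mu(z)-\mu(w)}{z-w}\right| = \log(1-|a|^2) + h(z) + h(w),
\]
where $h(z) = -\log|1-\bar a z| = \Re \sum_{n\ge 1} (\bar a z)^n/n$.

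\emph{Step 1: $\OO_\mu = 0$.} The function above is a constant plus a sum of a function of $z$ alone and a function of $w$ alone. Its double Fourier coefficients $\widehat\oo_{k,\ell}$ therefore vanish whenever $k \neq 0$ and $\ell \neq 0$. Since $\OO_\mu$ only involves indices in $\m Z^*$, it follows that $\OO_\mu = 0$.

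\emph{Step 2: ${\bf C}_\mu$ is unitary.} The Dirichlet integral is conformally invariant, and $P[u]\circ\mu = P[u\circ\mu]$ because $\mu$ is an automorphism of $\m D$. Together these give $\mathcal D_{\m D}(P[u\circ\mu]) = \mathcal D_{\m D}(P[u])$ for real $u$; equivalently, this is Lemma~\ref{lem:quasiinvariance} with $K=1$. Splitting into real and imaginary parts, ${\bf C}_\mu$ is an isometry of $\dot H^{1/2}$. The same argument applies to $\mu^{-1}$, and ${\bf C}_\mu{\bf C}_{\mu^{-1}} = {\bf I}$, so ${\bf C}_\mu$ is a surjective isometry and hence unitary.

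\emph{Step 3: $\OO_{\mu\circ\vp} = \OO_\vp$.} Substitute $z\mapsto\vp(z)$ and $w\mapsto\vp(w)$ into the identity above:
\[
\log\left|\frac{\mu(\vp(z))-\mu(\vp(w))}{z-w}\right| = \log\left|\frac{\vp(z)-\vp(w)}{z-w}\right| + \log(1-|a|^2) + h(\vp(z)) + h(\vp(w)).
\]
The additional terms again have the form constant plus a function of $z$ plus a function of $w$. By Lemma~\ref{lem:Holder continuity implies L2} everything lies in $L^1$; moreover $h\circ\vp$ is bounded because $|a|<1$. So the extra terms contribute only to Fourier coefficients with $k=0$ or $\ell=0$, and these do not enter $\OO$. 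This gives $\OO_{\mu\circ\vp}=\OO_\vp$.

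\emph{Step 4: descent to $T(1)$.} Since $T(1) = \mob(\m S^1)\backslash\QS(\m S^1)$ is the quotient by left (post-)composition, Step~3 shows that $\vp\mapsto\OO_\vp$ descends to $T(1)$.

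None of these steps is a real obstacle. The one point needing care is the unitarity in Step~2: one must note that $\dot H^{1/2}$ quotients out constants, so the normalization subtracting the mean in ${\bf C}_\mu$ is harmless, and that isometry plus invertibility yields unitarity.
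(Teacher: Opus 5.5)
Your proof is correct. For $\OO_\mu = 0$ and $\OO_{\mu\circ\vp} = \OO_\vp$ it is the paper's argument. The paper uses the identity $\mu'(z)\mu'(w) = \bigl((\mu(z)-\mu(w))/(z-w)\bigr)^2$ to write the log difference quotient as $\frac12\log|\mu'(z)| + \frac12\log|\mu'(w)|$, which is exactly your $\log(1-|a|^2) + h(z) + h(w)$. It then notes, as you do, that terms depending on only one variable do not contribute to coefficients with $k,\ell \in \m Z^*$.

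Where you differ is the unitarity of ${\bf C}_\mu$. The paper argues structurally. Since $\mu$ preserves $H_+$ and $H_-$, the block representation of ${\bf C}_\mu$ has $Y = 0$. The relation $XX^* - YY^* = I$, which comes from the block form of ${\bf C}_{\vp^{-1}}$, then forces the diagonal block $A$ to satisfy $AA^* = I$, so ${\bf C}_\mu {\bf C}_\mu^* = {\bf I}$. You instead use conformal invariance of the Dirichlet integral, equivalently Lemma~\ref{lem:quasiinvariance} with $K=1$, to get an isometry. You then get surjectivity from ${\bf C}_\mu {\bf C}_{\mu^{-1}} = {\bf I}$.

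Your route is more self-contained. It avoids the symplectic relations for the blocks of ${\bf C}_\vp$, which the paper imports from \cite{TT06,Nag_Sullivan}, and relies only on a lemma proved in the paper. The paper's route records along the way that $Y = 0$ for M\"obius maps. That fact reappears in the chain of equivalences $\OO_\vp = 0 \Leftrightarrow Y = 0 \Leftrightarrow \vp \in \mob(\m S^1)$ in the proof of Theorem~\ref{thm:main-Q-with-Schatten}.
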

We will see later (Corollary~\ref{cor:OO_two_vp}) that more generally $\OO_{\varphi_1 \circ \varphi_2} = {\bf C}_{\vp_2}{\OO}_{\vp_1}{\bf C}_{\vp_2}^* + {\OO}_{\vp_2}$, which implies, in particular, $\OO_{\varphi\circ \mu} = {\bf C}_\mu \OO_\varphi  {\bf C}_\mu^*$.

   \begin{proof}
    Since M\"obius transformations
satisfy $$\mu' (z)\mu'(w) = \left(\frac{\mu(z) - \mu (w)}{ z - w}\right)^2,$$ we get
\[
    \log \left| \frac{\mu(z) - \mu(w)}{z- w} \right| = \frac12 \log|\mu'(z)| + \frac12\log |\mu'(w)| 
    \]
     and so $\widehat{\lambda}_{k, \ell} = 0$ if $|k|, |\ell| \ge 1$ since we are computing a double integral, but each term depends only on one variable. It follows that $\OO_\mu = 0$.
     On the other hand, since $\mu$ preserves $H_+$ and $H_-$, ${\bf C}_\mu$ is of the form  \[
{\bf C}_{\mu}=\begin{pmatrix}
A & 0\\
0 & \overline{A}
\end{pmatrix} \] and by \eqref{XX* and I},
$AA^* = I$. Hence, ${\bf C}_\mu {\bf C}_\mu^* = {\bf I}$.

    The fact that $\OO_{\mu \circ \vp} = \OO_{\vp}$ now follows from
\[
 \log \left| \frac{\mu \circ \vp(z) - \mu \circ \vp(w)}{z-w}\right| = \log \left| \frac{\vp(z) -\vp(w)}{z-w}\right| + \frac12 \log |\mu'(\vp(z))| + \frac12 \log  |\mu' (\vp(w))|
 \]
 since the last two terms depend only on one variable.
   \end{proof}

\subsection{Single-layer potential and Grunsky inequalities for welding}  \label{sect: grunsky ineq}

We now relate $\OO_\vp$ to quantities from potential theory and use this link to prove an analog of the classical Grunsky inequalities for quasicircles. 

Recall that we wrote  \[
    {\bf S}[v](\ee^{\ii \t}) = \frac{1}{\pi}\int_{0}^{2\pi} v(\ee^{\ii t}) \log|\ee^{\ii t}-\ee^{\ii \t}|^{-1} \dd t.
    \]for the single-layer potential, which, by Lemma~\ref{lem: single-layer-potential}, defines an isometry $\dot{H}^{-1/2} \to \dot{H}^{1/2}$.  Therefore, the operator ${\bf T}$ defined by
\begin{align}\label{def:T-operator}
u \mapsto {\bf T}[u] \coloneqq {\bf S}[\dot{u}].
\end{align}
defines an isometry $\dot{H}^{1/2} \to \dot{H}^{1/2}$. 
    Here, given $u \in \dot{H}^{1/2}$, $\dot{u} \in \dot{H}^{-1/2}$ is the weak derivative of $u (\ee^{\ii t})$ with respect to $t$ and can be defined by
\[
\widehat{\dot{u}}_k = \ii k \widehat{u}_k, \quad k \in \mathbb{Z}.
\]
Therefore,
\[
\widehat{{\bf T}[u]}_k = \ii \, \sgn(k) \, \widehat{u}_k.
\]
We already note that ${\bf T} = {\bf J}^{-1}$. 
If $u$ is smooth, then $$\dot{u}(\ee^{\ii t}) = \partial_t u(\ee^{\ii t}) = \ii u'(\ee^{\ii t})\ee^{\ii t},$$
    where 
    \[
    u'(z) = \lim_{|w-z| \to 0, \ w\in \mathbb{S}^1} \frac{u(w) - u(z)}{w-z} .
    \]
Using Lemma~\ref{lem:quasiinvariance}, we observe the following.
\begin{lemma}    \label{lemma:Q-bounded}
The operator ${\bf T} - {\bf C}_\vp {\bf T} {\bf C}_{\vp^{-1}} \colon \dot H^{1/2} \to \dot H^{1/2}$
is bounded.
\end{lemma}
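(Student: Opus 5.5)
This lemma is essentially a consequence of the triangle inequality together with the boundedness of each factor, so the plan is short. We have already seen that ${\bf T}$ is an isometry of $\dot H^{1/2}$, since it is ${\bf S}$ composed with differentiation. In fact ${\bf T} = {\bf J}^{-1}$, which is diagonal in the canonical basis with entries $\pm \ii$ and is therefore unitary.

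For the composition operators, the plan is to invoke Lemma~\ref{lem:quasiinvariance}. Since $\vp \in \QS(\mathbb{S}^1)$, it extends to a $K$-quasiconformal map of $\mathbb{D}$ for some $K$, and then $\|{\bf C}_\vp u\|_{\dot H^{1/2}} \le \sqrt K \|u\|_{\dot H^{1/2}}$. The same holds for $\vp^{-1}$, because the inverse of a $K$-quasiconformal extension is again $K$-quasiconformal. Hence ${\bf C}_{\vp^{-1}}$ is also bounded with norm at most $\sqrt K$.

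One point needs care: ${\bf C}_\vp$ is defined on $\dot H^{1/2}$ by subtracting the mean, so we must check that the composition is well defined on this space. This is fine, since ${\bf C}_\vp$ and ${\bf C}_{\vp^{-1}}$ are maps $\dot H^{1/2} \to \dot H^{1/2}$, and so is ${\bf T}$, which kills constants and preserves mean zero.

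Combining these,
\[
\|({\bf T} - {\bf C}_\vp {\bf T} {\bf C}_{\vp^{-1}})u\|_{\dot H^{1/2}} \le \|u\|_{\dot H^{1/2}} + \sqrt K \cdot 1 \cdot \sqrt K\, \|u\|_{\dot H^{1/2}} = (1+K)\|u\|_{\dot H^{1/2}}.
\]
I expect no real obstacle here. The only substantive input is Lemma~\ref{lem:quasiinvariance}, together with the remark that it applies to $\vp^{-1}$. The purpose of the lemma is presumably just to make sense of this operator before identifying it with (a multiple of) $\OO_\vp$ via the kernel $\log|\vp(z)-\vp(w)|/|z-w|$.
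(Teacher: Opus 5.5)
Your proof is correct and follows the paper's route. The paper states the lemma as an immediate observation from Lemma~\ref{lem:quasiinvariance} applied to $\vp$ and $\vp^{-1}$, combined with the fact that ${\bf T}={\bf J}^{-1}$ is an isometry of $\dot H^{1/2}$; your explicit bound $1+K$ is just that observation written out.
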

More explicitly, we can understand this operator as follows. If $\vp \in \Diff^\infty(\mathbb{S}^1)$ and $u$ is smooth, then 
by a change of variables and writing $\psi = \vp^{-1}$,
we compute
\begin{align*}
   \partial_t (u\circ \psi (\ee^{\ii t}))  & = u'( \psi(\ee^{\ii t}))\ii \ee^{\ii t}\psi'(\ee^{\ii t}) \\ & =  \dot u \circ \psi(\ee^{\ii t})\frac{\ee^{\ii t}}{\psi(\ee^{\ii t})} \psi'(\ee^{\ii t})  = \dot u \circ \psi(\ee^{\ii t})|\psi'(\ee^{\ii t})|.
\end{align*}
Therefore,
\begin{align} \label{eq:CTC}
    {\bf C}_\vp {\bf T} {\bf C}_{\vp^{-1}}[u](\ee^{\ii t}) & = \frac{1}{\pi} \int_{0}^{2\pi} \dot u \circ \psi (\ee^{\ii s}) |\psi'(\ee^{\ii s})| \log \frac{1}{|\varphi(\ee^{\ii t}) - \ee^{\ii s}|} \ \dd s   \nonumber \\
    & = \frac{1}{\pi} \int_{0}^{2\pi} \dot u (\ee^{\ii s})\log \frac{1}{|\varphi(\ee^{\ii t}) - \varphi(\ee^{\ii s})|} \ \dd s.
\end{align}
Hence, we obtain
\begin{align}  \label{eq:Q-and-single-layer}  
({\bf T}-{\bf C}_\vp {\bf T} {\bf C}_{\vp^{-1}})u(\ee^{\ii t}) = \frac{1}{\pi}\int_{0}^{2\pi}\dot{u}(\ee^{\ii s}) \log \left|\frac{\vp(\ee^{\ii s})-\vp(\ee^{\ii t})}{e^{\ii s}-e^{\ii t}}\right| \dd s.
\end{align}

We may now express \eqref{eq:Q-and-single-layer} using the $\OO_\vp$-matrix.
\begin{lemma}\label{lemma:Q-well-defined}
    Let $\vp \in \QS(\mathbb{S}^1)$ and $u\in \dot{H}^{1/2}$. Then if
    \[v = ({\bf T}-{\bf C}_\vp {\bf T} {\bf C}_\vp^{-1})u,\]
    we have, for each $k \neq 0$,
    \begin{equation}\label{eq:fourier_v}
        \frac{1}{2} \sqrt{|k|} \widehat{v}_{k} = \lim_{M\to \infty} \sum_{1 \le |\ell| \le M} \oo_{k, \ell} \frac{\widehat{\dot{u}}_{-\ell}}{\sqrt{|\ell|}} = \lim_{M\to \infty} \sum_{1 \le|\ell| \le M} \oo_{k,-\ell} \frac{\widehat{\dot{u}}_{\ell}}{\sqrt{|\ell|}}.
    \end{equation}
\end{lemma}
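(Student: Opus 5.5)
The plan is to prove \eqref{eq:fourier_v} first for smooth data, where the integral formula \eqref{eq:Q-and-single-layer} is available, and then extend it to all $\vp \in \QS(\mathbb{S}^1)$ and $u \in \dot H^{1/2}$ by approximation.

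\emph{Smooth case.} Take $\vp \in \Diff^\infty(\mathbb{S}^1)$ and $u$ a trigonometric polynomial. Write $L(s,t) = \log|(\vp(\ee^{\ii s})-\vp(\ee^{\ii t}))/(\ee^{\ii s}-\ee^{\ii t})|$. By Lemma~\ref{lem:convergence of grunsky expansion}, $L(s,t) = \sum_{k,\ell} \widehat\oo_{k,\ell}\ee^{\ii(ks+\ell t)}$ converges absolutely and uniformly. Substituting this into \eqref{eq:Q-and-single-layer} and integrating term by term in $s$ against $\dot u(\ee^{\ii s}) = \sum_m \widehat{\dot u}_m \ee^{\ii m s}$ gives
\[
\widehat v_k = 2\sum_{\ell} \widehat\oo_{\ell,k}\, \widehat{\dot u}_{-\ell}.
\]
The term $\ell = 0$ vanishes because $\widehat{\dot u}_0 = 0$. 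Multiplying by $\tfrac12\sqrt{|k|}$, and using $\oo_{k,\ell} = \sqrt{|k\ell|}\,\widehat\oo_{k,\ell}$ together with the symmetry \eqref{eq:oo_self_adjoint}, we get
\[
\tfrac12\sqrt{|k|}\,\widehat v_k = \sum_{\ell\ne0} \oo_{k,\ell}\,\frac{\widehat{\dot u}_{-\ell}}{\sqrt{|\ell|}}.
\]
The sum is finite since $u$ is a polynomial. Reindexing $\ell \mapsto -\ell$ gives the second expression.

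\emph{Extension in $u$, for fixed smooth $\vp$.} Note that $\widehat{\dot u}_\ell/\sqrt{|\ell|} = \ii\,\sgn(\ell)\, u_\ell$, where $(u_\ell) \in \ell^2$ are the coordinates of $u$ in the canonical basis. The left-hand side of \eqref{eq:fourier_v} is continuous in $u \in \dot H^{1/2}$ by Lemma~\ref{lemma:Q-bounded}. For the right-hand side, the row $(\oo_{k,\ell})_\ell$ must lie in $\ell^2$ for each fixed $k$. In the smooth case this follows from the rapid decay in Lemma~\ref{lem:convergence of grunsky expansion}. Alternatively, applying the identity to $u = e_m$ shows that row $k$ consists, up to unimodular factors, of the matrix entries of the bounded operator $\tfrac12({\bf T}-{\bf C}_\vp{\bf T}{\bf C}_{\vp^{-1}})$ in the canonical basis, so it is in $\ell^2$ with norm bounded by the operator norm. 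Then the partial sums over $|\ell|\le M$ converge by Cauchy--Schwarz, and the identity passes to the limit along truncations $P_M u \to u$.

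\emph{General $\vp \in \QS(\mathbb{S}^1)$.} Normalize by Lemma~\ref{lem:mob}: post-composing with a M\"obius map leaves $\OO_\vp$ unchanged, and (I expect) changes ${\bf C}_\vp{\bf T}{\bf C}_{\vp}^{-1}$ only by conjugation with the unitary ${\bf C}_\mu$, which commutes with ${\bf T}$ since ${\bf C}_\mu$ preserves $H_\pm$. Then take the approximating sequence $\vp_n$ from Lemma~\ref{lem: qs approximation}. For a fixed trigonometric polynomial $u$, the right-hand side is a finite sum, and it converges by Claim~4. For the left-hand side, ${\bf C}_{\vp_n^{-1}}u \to {\bf C}_{\vp^{-1}}u$ by Claim~3 applied to $\vp_n^{-1}$, which are also normalized $K$-quasiconformal restrictions. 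Then ${\bf T}$ is an isometry, and ${\bf C}_{\vp_n} w_n \to {\bf C}_\vp w$ because the ${\bf C}_{\vp_n}$ are uniformly bounded by Lemma~\ref{lem:quasiinvariance} and converge strongly. Hence $\widehat{(v_n)}_k \to \widehat v_k$. Finally, extend from polynomials to all $u \in \dot H^{1/2}$ as in the previous paragraph. The row bound $\sum_\ell|\oo_{k,\ell}|^2 \le \|{\bf T}-{\bf C}_\vp{\bf T}{\bf C}_{\vp^{-1}}\|^2/4$ now holds for $\vp$ itself, by the polynomial identity applied to basis vectors.

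\emph{Main obstacle.} The delicate point is this approximation step: justifying the strong convergence of the composite operators, and making sure no $\ell^2$ summability of the rows is assumed before it has been derived from boundedness. I would handle this exactly in the order above, first establishing the identity for polynomials and only then extending to general $u$.
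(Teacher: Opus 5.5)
Your proposal is correct and follows essentially the same route as the paper: the smooth case via \eqref{eq:Q-and-single-layer} and Lemma~\ref{lem:convergence of grunsky expansion}, then approximation of $\vp$ through Lemma~\ref{lem: qs approximation} for truncated (polynomial) $u$, and finally the limit in the truncation using boundedness of ${\bf T}-{\bf C}_\vp{\bf T}{\bf C}_{\vp^{-1}}$. Your separate row-$\ell^2$ bound is harmless but not needed, since the partial sum up to $M$ is exactly $\tfrac12\sqrt{|k|}$ times the $k$-th Fourier coefficient of the image of $P_M u$. Your explicit M\"obius normalization, and your justification of the operator-side convergence via strong convergence and uniform boundedness of ${\bf C}_{\vp_n}$ and ${\bf C}_{\vp_n^{-1}}$, make precise two steps that the paper treats only briefly.
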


\begin{proof} 
Assume first that $\vp \in \Diff^\infty (\m S^1)$ and $u \in C^\infty (\m S^1)$.
  Using \eqref{eq:Q-and-single-layer} and Lemma~\ref{lem:convergence of grunsky expansion}, we have
\begin{align}\label{eq:smooth_v_k}
    \widehat{v}_k  & =  \frac{1}{2\pi} \int_{0}^{2\pi} \ee^{-\ii k t}  \left(\frac{1}{\pi} \int_0^{2\pi} \sum_{n} \widehat{\dot{u}}_{n} \ee^{\ii n s} \sum_{m, \ell}\widehat{\oo}_{m, \ell} \ee^{\ii ( m t +\ell s) } \ \dd s \right)  \dd t \nonumber \\
    & =  \frac{1}{2\pi} \int_{0}^{2\pi} \ee^{-\ii k t} \left(2 \sum_{m, \ell} \widehat{\dot{u}}_{-\ell} \widehat{\oo}_{m, \ell}  \ee^{\ii m t} \right) \dd t \nonumber \\
    & = 2 \sum_{\ell} \widehat{\dot{u}}_{-\ell}\widehat{\oo}_{k, \ell}.
\end{align}
For $\varphi \in \QS (\m S^1)$ and $u \in \dot H^{1/2}$, approximate $\varphi$ by a sequence  $\varphi_n \in \Diff^{\infty} (\m S^1)$ as in Lemma~\ref{lem: qs approximation}. In particular, the corresponding coefficients converge:
\begin{equation}\label{eq:l_n_converges}
    \widehat \oo_{k,\ell} (\varphi_n) \xrightarrow{n\to \infty} \widehat \oo_{k,\ell} (\varphi).
\end{equation}

Let $m \ge 1$, and write $u_m = P_m (u)$ where  $P_m$ is the projection onto the Fourier modes with index $|k| \le m$ and define
\[
v_{n,m} \coloneqq ({\bf T}-{\bf C}_{\vp_n} {\bf T} {\bf C}_{\vp_n^{-1}})u_m, \quad v_{m} \coloneqq ({\bf T}-{\bf C}_{\vp} {\bf T} {\bf C}_{\vp^{-1}})u_m,
\]
By \eqref{eq:l_n_converges}, we know that 
$$v_{n,m} \xrightarrow[]{n\to \infty} v_m, \quad \text{in } \dot H^{1/2}$$
as both sides have only finitely many nonzero Fourier coefficients. Therefore,
\[
\lim_{m \to \infty} \lim_{n \to \infty}\|v-v_{n,m}\|_{\dot H^{1/2}} = \lim_{m \to \infty} \|v - v_m\|_{\dot H^{1/2}} 
=0,
 \]
 where the second equality follows from the fact that ${\bf T} - {\bf C}_{\vp} {\bf T} {\bf C}_{\vp^{-1}}$ is a bounded operator. 

In terms of coefficients, it follows that
for fixed $k$, by \eqref{eq:smooth_v_k} and \eqref{eq:l_n_converges},
\begin{align*}
\widehat{v}_k = \lim_{m \to \infty}  \lim_{n \to \infty} \widehat{(v_{n,m})}_{k}  & =  2 \lim_{m \to \infty}  \lim_{n \to \infty} \sum_{|\ell| \le m} \widehat{\oo}_{k, \ell}(\vp_n) \, \widehat{\dot{u}}_{-\ell}\\
& = 2 \lim_{m \to \infty}  \sum_{|\ell| \le m} \widehat{\oo}_{k, \ell} (\varphi) \, \widehat{\dot{u}}_{-\ell}.
\end{align*}
Using $\widehat \l_{k, \ell} = \l_{k, \ell}/\sqrt{|k \ell|}$, we obtain the first equality in \eqref{eq:fourier_v}.
Re-indexing $\ell$ by $-\ell$, we obtain the second equality.  
\end{proof}

Recall that the coefficients $u_k$ with respect to the canonical basis in $\dot H^{1/2}$ are related to the Fourier coefficients by $u_k = \sqrt{|k|}\widehat u_k$ (see Section~\ref{sect: Fourier and H12}) and the Hilbert transform ${\bf J}$ is defined in \eqref{eq:Hilbert}. So we have
$$\frac{\widehat {\dot u}_\ell}{\sqrt{|\ell|}} = \ii \frac{\ell}{\sqrt{|\ell|}} \, \widehat u_\ell  = \ii \ \sgn (\ell) \sqrt{|\ell|}\, \widehat u_\ell = - ({\bf J} u )_\ell. $$ 
Hence, Lemma~\ref{lemma:Q-well-defined} can be restated as follows:  
\begin{lemma}\label{lem:Q-self-adjoint}
The matrix $-{\bf \OO}_\vp {\bf J}$ is the matrix representation of the bounded operator 
$$ \frac12 ({\bf T} -  {\bf C}_\vp {\bf T} {\bf C}_\vp^{-1})  \colon \dot H^{1/2} \to \dot H^{1/2}$$
with respect to the canonical basis, that is, identifying $\dot H^{1/2}$ with $\ell_{\mathbb{Z}^*}^2$ via coefficients.

The matrix $\OO_\vp$ defines a bounded operator  $\ell_{\mathbb{Z}^*}^2  \to \ell_{\mathbb{Z}^*}^2 $ by ``matrix multiplication'',
\[
 {\bf h}=\left((h_k)_{k \ge 1}; (h_{-k})_{k \ge 1} \right) \mapsto \OO_\vp{\bf  h} = \left((\OO_\vp{\bf h}_{k})_{k \ge 1}; (\OO_\vp{\bf  h}_{-k})_{k \ge 1}\right),
\]
where, for $|k| \ge 1$,
\[
\OO_\vp{\bf h}_{k} = \lim_{m \to \infty}\sum_{1 \le |\ell| \le m} \oo_{k,-\ell}h_\ell.
\]
Moreover, the operator $\OO_\vp$ is self-adjoint.

\end{lemma}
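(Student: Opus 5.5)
The plan is to read Lemma~\ref{lem:Q-self-adjoint} off from Lemma~\ref{lemma:Q-well-defined} by identifying $\dot H^{1/2}$ with $\ell^2_{\mathbb{Z}^*}$ through the canonical coefficients $u_k=\sqrt{|k|}\,\widehat u_k$. Put $v=({\bf T}-{\bf C}_\vp{\bf T}{\bf C}_\vp^{-1})u$. Then $v_k=\sqrt{|k|}\,\widehat v_k$, and we have $\widehat{\dot u}_\ell/\sqrt{|\ell|}=-({\bf J}u)_\ell$. Substituting both into \eqref{eq:fourier_v} gives
\[
\tfrac12 v_k=-\lim_{m\to\infty}\sum_{1\le|\ell|\le m}\oo_{k,-\ell}({\bf J}u)_\ell .
\]
Since $(\OO_\vp)_{k,\ell}=\oo_{k,-\ell}$ for all $k,\ell\in\mathbb{Z}^*$ by \eqref{eq:oo_self_adjoint}, this says that the matrix $-\OO_\vp{\bf J}$, applied to $u$ entrywise with symmetric partial sums, produces the coefficients of $\tfrac12({\bf T}-{\bf C}_\vp{\bf T}{\bf C}_\vp^{-1})u$. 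By Lemma~\ref{lemma:Q-bounded} this operator is bounded. This proves the first claim.

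Boundedness of $\OO_\vp$ comes from the invertibility of ${\bf J}$. The matrix ${\bf J}=\ii\,\mathrm{diag}(-I,I)$ is diagonal and unitary. Given ${\bf h}\in\ell^2$, set $u={\bf J}^{-1}{\bf h}$. Multiplying by a diagonal unimodular matrix does not affect convergence of the symmetric partial sums, so $\sum_{|\ell|\le m}\oo_{k,-\ell}h_\ell$ converges for each $k$. The limit equals $-\tfrac12\big(({\bf T}-{\bf C}_\vp{\bf T}{\bf C}_\vp^{-1}){\bf J}^{-1}{\bf h}\big)_k$. Hence $\OO_\vp=-\tfrac12({\bf T}-{\bf C}_\vp{\bf T}{\bf C}_\vp^{-1}){\bf J}^{-1}$ as operators, and $\OO_\vp$ is bounded.

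For self-adjointness there are two routes. The first is to use the matrix symmetry $\oo_{k,-\ell}=\overline{\oo_{-k,\ell}}=\overline{\oo_{\ell,-k}}$ from \eqref{eq:oo_self_adjoint}, which gives $(\OO_\vp)_{k,\ell}=\overline{(\OO_\vp)_{\ell,k}}$. A bounded operator whose matrix entries $\langle \OO_\vp e_\ell,e_k\rangle$ are Hermitian-symmetric is self-adjoint. Here $\langle\OO_\vp e_\ell,e_k\rangle$ is exactly the entry, because on basis vectors the limits are trivial. So $\langle\OO_\vp{\bf h},{\bf g}\rangle=\langle{\bf h},\OO_\vp{\bf g}\rangle$ holds for finitely supported vectors, and it extends to all of $\ell^2$ by boundedness and density. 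As a cross-check, the second route is the operator identity: ${\bf T}={\bf J}^{-1}=-{\bf J}={\bf J}^*$, and ${\bf C}_\vp^{-1}={\bf C}_{\vp^{-1}}$. I expect the main obstacle to be justifying ${\bf C}_{\vp^{-1}}=\pm$ a ${\bf J}$-conjugate of ${\bf C}_\vp^*$ via the symplectic relations \eqref{XX* and I}–\eqref{eq:symplectic 2}. Since this route is harder, I would rely on the entrywise argument, which needs only boundedness.
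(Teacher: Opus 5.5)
Your proposal is correct and follows the paper's proof. The paper likewise obtains the first two claims as a direct restatement of Lemma~\ref{lemma:Q-well-defined} using $\widehat{\dot u}_\ell/\sqrt{|\ell|}=-({\bf J}u)_\ell$, and it deduces self-adjointness from the Hermitian symmetry \eqref{eq:oo_self_adjoint} of the entries; your extension from finitely supported vectors by density just spells out what the paper leaves implicit (and the operator route you set aside is the computation ${\bf T}{\bf C}_\vp^{-1}{\bf J}={\bf C}_\vp^*$ that the paper carries out later, in the proof of Theorem~\ref{thm:2OO}).
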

\begin{proof}
 The formula is a restatement of Lemma~\ref{lemma:Q-well-defined}.
    The self-adjointness was already observed in \eqref{eq:oo_self_adjoint}.
\end{proof}

We shall derive a version of the Grunsky inequalities for ${\bf \OO}$ from the following result. We could also use \eqref{eq:OO} below, but we find it interesting to use the single-layer potential directly, which makes the link to Dirichlet energies more apparent. Notice that the upper and lower bounds are not symmetric. See also Section~3 of \cite{CJV}.
\begin{proposition}
\label{lem:key-grunsky-estimate}
    Suppose $\vp \in \QS(\mathbb{S}^1)$ extends to a $K$-quasiconformal map of $\mathbb{D}$ and that $u \in \dot{H}^{1/2}$. Then
    \[
    \frac{1}{2\pi}\langle \dot{u}, {\bf C}_\vp {\bf T} {\bf C}_{\vp^{-1}} u \rangle_{L^2} = \|{\bf C}_{\vp^{-1}} u\|_{\dot H^{1/2}}^2.
    \]
    In particular, 
    \begin{align}  \label{eq:energy-identity}
    \frac{1}{2\pi}\langle \dot{u},({\bf T}- {\bf C}_\vp {\bf T} {\bf C}_{\vp^{-1}} ) u \rangle_{L^2} = \|u\|_{\dot H^{1/2}}^2-\|{\bf C}_{\vp^{-1}} u\|_{\dot H^{1/2}}^2
        \end{align}
   and consequently,
    \begin{align}\label{estimate:Grunsky-welding}
  -(K-1)\| u\|_{\dot H^{1/2}}^2 \le \frac{1}{2\pi}\langle \dot{u}, ({\bf T}-{\bf C}_\vp {\bf T} {\bf C}_{\vp^{-1}}) u \rangle_{L^2} \le ( 1-1/K)\| u\|_{\dot H^{1/2}}^2.
        \end{align}
\end{proposition}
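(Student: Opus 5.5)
The plan is to reduce everything to a Fourier-side identity for the first claim, and then read off the rest.

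\textbf{Step 1: Fourier form of the pairing.} For $u \in \dot H^{1/2}$ and $w \in \dot H^{1/2}$, Parseval gives
\[
\frac{1}{2\pi}\langle \dot u, {\bf T} w\rangle_{L^2} = \sum_k \ii k\,\widehat u_k\,\overline{\ii\,\sgn(k)\widehat w_k} = \sum_k |k|\,\widehat u_k\overline{\widehat w_k} = \langle u, w\rangle_{\dot H^{1/2}} .
\]
This pairing is well defined as the $\dot H^{-1/2}$--$\dot H^{1/2}$ duality, with the conjugation convention of the inner product. Taking $w = u$ immediately gives $\frac{1}{2\pi}\langle \dot u, {\bf T}u\rangle = \|u\|^2_{\dot H^{1/2}}$.

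\textbf{Step 2: the first identity.} Write $w = {\bf C}_{\vp^{-1}}u$ and $\psi = \vp^{-1}$. We need
\[
\frac{1}{2\pi}\langle \dot u, {\bf C}_\vp {\bf T} w\rangle = \frac{1}{2\pi}\langle \dot w, {\bf T}w\rangle .
\]
First take $\vp \in \Diff^\infty$ and $u$ smooth. Since ${\bf C}_\vp$ subtracts a constant, and $\dot u$ has zero mean, constants drop out. The left side is then $\frac{1}{2\pi}\int \dot u(\ee^{\ii t})\,\overline{({\bf T}w)(\vp(\ee^{\ii t}))}\,\dd t$. Substitute $t \mapsto$ the parameter of $\psi$, i.e.\ $\ee^{\ii t} = \psi(\ee^{\ii s})$. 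Then $\dot u(\psi(\ee^{\ii s}))\,|\psi'(\ee^{\ii s})|\,\dd s = \partial_s(u\circ\psi)\,\dd s = \dot w\,\dd s$, which is the same computation as before \eqref{eq:CTC}. So the integral becomes $\frac{1}{2\pi}\int \dot w\,\overline{{\bf T}w}\,\dd s = \|w\|^2_{\dot H^{1/2}}$ by Step 1.

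For general $\vp \in \QS$ and $u \in \dot H^{1/2}$, approximate as follows.
\begin{itemize}
\item Approximate $u$ by trigonometric polynomials. Both sides are continuous in $u \in \dot H^{1/2}$, since ${\bf C}_\vp$, ${\bf C}_{\vp^{-1}}$ and ${\bf T}$ are bounded by Lemma~\ref{lem:quasiinvariance} and Lemma~\ref{lem: single-layer-potential}, and the pairing is the bounded duality.
\item Approximate $\vp$ by $\vp_n$ from Lemma~\ref{lem: qs approximation}. The inverses $\vp_n^{-1}$ come from the inverse quasiconformal maps, which have the same form, so claim~3 applies to them too and ${\bf C}_{\vp_n^{-1}}u \to {\bf C}_{\vp^{-1}}u$ strongly.
\item Passing to the limit needs ${\bf C}_{\vp_n}{\bf T}{\bf C}_{\vp_n^{-1}}u \to {\bf C}_\vp{\bf T}{\bf C}_{\vp^{-1}}u$ in $\dot H^{1/2}$. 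This follows from the uniform bound $\sqrt K$ on $\|{\bf C}_{\vp_n}\|$ together with strong convergence, by the same $\epsilon/3$ argument used in claim~3.
\end{itemize}
This limiting step is the only real technical point. Everything else is bookkeeping.

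\textbf{Step 3: the remaining statements.} Subtracting the identity of Step 2 from the Step 1 identity $\frac{1}{2\pi}\langle \dot u,{\bf T}u\rangle = \|u\|^2_{\dot H^{1/2}}$ gives \eqref{eq:energy-identity}.

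For \eqref{estimate:Grunsky-welding}, apply Lemma~\ref{lem:quasiinvariance} to $\vp^{-1}$, which also has a $K$-quasiconformal extension:
\[
K^{-1}\|u\|^2_{\dot H^{1/2}} \le \|{\bf C}_{\vp^{-1}}u\|^2_{\dot H^{1/2}} \le K\|u\|^2_{\dot H^{1/2}} .
\]
Hence $\|u\|^2 - \|{\bf C}_{\vp^{-1}}u\|^2$ lies between $-(K-1)\|u\|^2$ and $(1-1/K)\|u\|^2$, which is the claimed estimate.
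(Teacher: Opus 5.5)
Your proposal is correct and follows essentially the paper's route: the smooth case by the change of variables $\ee^{\ii t}=\psi(\ee^{\ii s})$ combined with $\frac{1}{2\pi}\langle \dot w, {\bf T}w\rangle_{L^2}=\|w\|^2_{\dot H^{1/2}}$, then approximation via Lemma~\ref{lem: qs approximation}, then Lemma~\ref{lem:quasiinvariance} applied to $\vp^{-1}$; your Step~1 is just the Fourier form of Lemma~\ref{lem:log-integral-dirichlet}, since ${\bf T}w={\bf S}[\dot w]$. Your limiting argument is more explicit than the paper's, but the convergence ${\bf C}_{\vp_n^{-1}}u\to{\bf C}_{\vp^{-1}}u$ should be justified differently: $\vp_n^{-1}\to\vp^{-1}$ uniformly, because inverses of uniformly convergent circle homeomorphisms converge uniformly, and the $\vp_n^{-1}$ are uniformly $K$-quasiconformal, so the proof of Claim~3 applies verbatim; appealing to the inverses having \emph{the same form} does not establish this.
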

\begin{proof}
 First, assume $\vp \in \Diff^\infty(\mathbb{S}^1)$ extends to a $K$-quasiconformal map of $\m D$ and that $u$ is smooth. Then, writing as before $\psi = \vp^{-1}$ and using 
\begin{align*}
   \partial_t (u\circ \psi (\ee^{\ii t}))  = \dot u \circ \psi(\ee^{\ii t})|\psi'(\ee^{\ii t})|,
\end{align*}
and a change of variables similar to \eqref{eq:CTC}, we obtain \begin{align*}
   \frac{1}{\pi} \langle \dot{u}, {\bf C}_\vp {\bf T} {\bf C}_{\vp^{-1}} u \rangle_{L^2} 
    & = \frac{1}{\pi^2}\iint_{[0,2\pi]^2}\partial_t{({\bf C}_{\psi} u)}(\ee^{\ii t}) \overline{\partial_s{({\bf C}_{\psi} u)}(\ee^{\ii s})} \log \left|e^{\ii s}-e^{\ii t}\right|^{-1} \dd s \dd t \\ 
    &  = 2\|{\bf C}_\psi u\|^2_{\dot H^{1/2}},
    \end{align*}
    where we used Lemma~\ref{lem:log-integral-dirichlet} for the last equality.
    The identity in the general case now follows by an approximation using Lemma~\ref{lem: qs approximation}.

    The estimates \eqref{estimate:Grunsky-welding} follow from \eqref{eq:energy-identity} and Lemma~\ref{lem:quasiinvariance}.
\end{proof}

\begin{corollary}
\label{cor:Grunsky-ineqs-general-version}
    Suppose $\vp \in \QS(\mathbb{S}^1)$ extends to a $K$-quasiconformal map of $\mathbb{D}$. If ${\bf h} = ((h_k)_{k \ge 1}; (h_{-k})_{k \ge 1}) \in \ell^2_{\mathbb{Z}^*}$, then,
    \[
    - \frac{1}{2}(K-1) \sum_{|k|\ge 1}|h_k|^2\le \lim_{m\to \infty} \sum_{1 \le |k|, |\ell| \le m} \oo_{k,- \ell}  \overline{h_{k}}h_\ell  \le \frac{1}{2}(1-1/K) \sum_{|k|\ge 1}|h_k|^2.
        \]
        In particular, since $\OO_\vp$ is self-adjoint,
\begin{equation*} 
\|\OO_\vp {\bf h}\|_{\ell^2} \le \frac{1}{2}\left(K-1\right)\|{\bf h}\|_{\ell^2}
\end{equation*}
and 
\begin{equation*}% \label{eq:norm_bound_OO}
{\bf I} - \OO_\vp \ge \frac12 \left(1 + \frac1K\right)
\end{equation*}
is invertible. 
\end{corollary}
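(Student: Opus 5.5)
We translate the estimate \eqref{estimate:Grunsky-welding} of Proposition~\ref{lem:key-grunsky-estimate} into the language of the matrix $\OO_\vp$, using the dictionary of Lemma~\ref{lem:Q-self-adjoint}. Given ${\bf h}\in\ell^2_{\mathbb{Z}^*}$, we choose $u\in\dot H^{1/2}$ so that ${\bf h} = -{\bf J}u$ in canonical coordinates. Concretely, $h_\ell = \widehat{\dot u}_\ell/\sqrt{|\ell|} = \ii\,\sgn(\ell)\, u_\ell$. Since ${\bf J}$ is unitary, $\|{\bf h}\|_{\ell^2}=\|u\|_{\dot H^{1/2}}$.

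Next we compute the pairing $\frac{1}{2\pi}\langle \dot u, v\rangle_{L^2}$ with $v=({\bf T}-{\bf C}_\vp{\bf T}{\bf C}_{\vp^{-1}})u$ via Parseval:
\[
\frac{1}{2\pi}\langle \dot u,v\rangle_{L^2}=\sum_k \widehat{\dot u}_k\overline{\widehat v_k}.
\]
Lemma~\ref{lemma:Q-well-defined} gives $\widehat v_k = \frac{2}{\sqrt{|k|}}\sum_\ell \oo_{k,-\ell}\, h_\ell$, so
\[
\sum_k \widehat{\dot u}_k\overline{\widehat v_k} = 2\sum_k h_k\,\overline{(\OO_\vp{\bf h})_k} = 2\,\overline{\langle \OO_\vp{\bf h},{\bf h}\rangle}.
\]
The right-hand side of \eqref{eq:energy-identity} is real, and $\OO_\vp$ is self-adjoint, so this quantity equals $2\langle \OO_\vp{\bf h},{\bf h}\rangle$. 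Here the series converges because $v\in \dot H^{1/2}$ and $\dot u\in\dot H^{-1/2}$ pair boundedly. The interchange of the limit in Lemma~\ref{lem:Q-self-adjoint} with the sum over $k$ is justified by first taking ${\bf h}$ finitely supported and then passing to the limit using boundedness of $\OO_\vp$. With this identification, the symmetric partial sums in the statement converge to $\langle\OO_\vp{\bf h},{\bf h}\rangle$.

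Dividing \eqref{estimate:Grunsky-welding} by $2$ then gives the two-sided quadratic form bound. For a self-adjoint operator, the norm equals $\sup_{\|{\bf h}\|=1}|\langle\OO_\vp{\bf h},{\bf h}\rangle|$. This supremum is at most $\max\{\frac12(K-1),\frac12(1-1/K)\}=\frac12(K-1)$, since $1-1/K\le K-1$. Finally, ${\bf I}-\OO_\vp\ge 1-\frac12(1-1/K)=\frac12(1+1/K)>0$. A bounded self-adjoint operator bounded below by a positive constant is invertible, with inverse norm at most $2K/(K+1)$.

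The only delicate step is the bookkeeping of signs and conjugations in the Parseval identification: checking that ${\bf h}=-{\bf J}u$ produces exactly $\langle\OO_\vp{\bf h},{\bf h}\rangle$ rather than a version with index reflected. We would verify this on a single mode $u=e_k$ before committing.
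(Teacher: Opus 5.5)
Your proposal is correct and follows the paper's argument. You use the same choice $h_\ell=\widehat{\dot u}_\ell/\sqrt{|\ell|}$, the same identity \eqref{eq:energy-identity} with the bounds \eqref{estimate:Grunsky-welding}, and the same deduction of the norm bound and invertibility from the two-sided quadratic-form estimate.

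The only difference is in the bookkeeping. The paper identifies the pairing with $\sum\oo_{k,-\ell}\overline{h_k}h_\ell$ by expanding the kernel for smooth $\vp$ and truncated $u_m$, then approximating. You instead apply Lemma~\ref{lemma:Q-well-defined} and Parseval directly for quasisymmetric $\vp$, and obtain convergence of the square partial sums from $\langle \OO_\vp P_m{\bf h},P_m{\bf h}\rangle\to\langle\OO_\vp{\bf h},{\bf h}\rangle$, which is slightly cleaner.
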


\begin{proof}
Assume first $\vp \in \Diff^{\infty}(\mathbb{S}^1)$ and we prove the upper bound. 

Fix ${\bf h} = ((h_k)_{k \ge 1}; (h_{-k})_{k \ge 1}) \in \ell^2_{\mathbb{Z}^*}$,
for $|k| \ge 1$, we let $\widehat{u}_k \coloneqq \sqrt{|k|}h_k/(\ii k)$.  Then $u=\sum_{k \in \m Z^*} \widehat{u}_k \ee^{\ii k t} \in \dot H^{1/2}$ and we have 
\[\widehat{\dot{u}}_{k}/\sqrt{|k|}=h_k, \qquad \|u\|_{\dot H^{1/2}} = \|h\|_{\ell^2}.\] Let $u_m = P_mu$. 
By Lemma~\ref{lem:convergence of grunsky expansion} and \eqref{eq:CTC}, 
    \begin{align*}
    \frac{1}{4\pi}\langle \dot{u}_m, ({\bf T}-{\bf C}_\vp {\bf T} {\bf C}_{\vp^{-1}}) u_m  \rangle_{L^2} & = \frac{1}{(2\pi)^2}  \iint_{[0,2\pi]^2}   \dot{u}_m(\ee^{\ii s}) \overline{\dot{u}_m(\ee^{\ii t})}  \log \left|\frac{\varphi(\ee^{\ii s}) - \varphi(\ee^{\ii t})}{\ee^{\ii s} - \ee^{\ii t}} \right| \dd s \dd t\\
& =  \sum_{1 \le |k|, |\ell| \le m} \widehat{\oo}_{k,-\ell}  \overline{\widehat{\dot{u}}}_{k} \widehat{\dot{u}}_{\ell}\\
& =  \sum_{1 \le |k|, |\ell| \le m} \oo_{k,-\ell}  \frac{\overline{\widehat{\dot{u}}}_{k}}{\sqrt{|k|}} \frac{\widehat{\dot{u}}_{\ell}}{\sqrt{|\ell|}}\\
& = \sum_{1 \le |k|, |\ell| \le m} \oo_{k,-\ell}  \overline{h_k}h_\ell.
    \end{align*}
    On the other hand, by Lemma~\ref{lem:key-grunsky-estimate},
    \[
     \frac{1}{4\pi}\langle \dot{u}_m, ({\bf T}-{\bf C}_\vp {\bf T} {\bf C}_{\vp^{-1}}) u_m  \rangle_{L^2} = \frac{1}{2}\left(\|u_m\|_{\dot H^{1/2}}^2-\|{\bf C}_{\vp^{-1}} u_m\|_{\dot H^{1/2}}^2\right).
    \]
    By approximation, using Lemma~\ref{lem: qs approximation}, and then letting $m \to \infty$, we conclude that for $\vp \in \QS(\mathbb{S}^1)$,
    \begin{align}\label{march3}   
  \lim_{m\to \infty} \sum_{1 \le |k|, |\ell| \le m} \oo_{k,-\ell} \overline{h_{k}} h_\ell =  \frac{1}{2}(\|u\|_{\dot H^{1/2}}^2-\|{\bf C}_{\vp^{-1}} u\|_{\dot H^{1/2}}^2) \le \frac{1}{2}(1-1/K)\|u\|_{\dot H^{1/2}}^2.
        \end{align}
 The last estimate used the upper bound in \eqref{estimate:Grunsky-welding}. In particular, the limit on the left exists (by Lemma~\ref{lem:Q-self-adjoint}). We conclude using $\|u\|_{\dot H^{1/2}} = \|h\|_{\ell^2}$.

    The lower bound follows in a similar manner, using the lower bound in \eqref{estimate:Grunsky-welding}.
\end{proof}

\subsection{Proofs of characterizations and determinant formulas}\label{sect:proofs}

    Let $\varphi \in \QS(\mathbb{S}^1)$. We are now ready to prove Theorem~\ref{thm:2OO}. Recall that we want to show that  \begin{equation} \label{eq:OO}
         2\OO_\vp = {\bf I}-{\bf C}_{\vp} {\bf C}_{\vp}^*
    \end{equation}
                as matrices.
\begin{proof}[Proof of  Theorem~\ref{thm:2OO}]
    Since $-{\bf J}^2 = {\bf I}$, Lemma~\ref{lem:Q-self-adjoint} implies that $2\OO_\vp$ represents  in the canonical basis the operator:
    \[
  {\bf TJ} -  {\bf C}_\vp {\bf T} {\bf C}_\vp^{-1}{\bf J}:\dot{H}^{1/2} \to \dot{H}^{1/2} .
    \]
    Note first that ${\bf TJ}={\bf I}$: % in the canonical basis. 
     we have already observed that $\widehat{{\bf T}[u]}_k = \ii \, \sgn(k) \, \widehat{u}_k$. On the other hand,
     \[
        {\bf C}_{\vp^{-1}} = {\bf C}_{\vp}^{-1}= \begin{pmatrix} X^*&  -Y^t\\-Y^*&  X^t \end{pmatrix},
    \] 
    so
    \[
    {\bf T} {\bf C}_{\vp}^{-1} {\bf J} = - \begin{pmatrix} I &  0\\0&  -I \end{pmatrix} \begin{pmatrix} X^*&  -Y^t\\-Y^*&  X^t \end{pmatrix}\begin{pmatrix} -I &  0\\0&  I \end{pmatrix} =  \begin{pmatrix} X^*&  Y^t\\Y^*&  X^t \end{pmatrix}={\bf C}_\vp^*.
    \]
      This concludes the proof.
\end{proof}

       \begin{proof}[Proof of Theorem~\ref{thm:main-Q-with-Schatten}]
    Given \eqref{eq:OO}, we may compute as in  Lemma~2.7 of \cite{FS25_sle_welding}:
    \begin{align}\label{eq:expression of hat K}
2\OO_\vp & = {\bf I}-{\bf C}_{\vp} {\bf C}_{\vp}^*
    = {\bf I} -  \begin{pmatrix}
X & Y\\
\overline{Y} & \overline{X}
\end{pmatrix} \begin{pmatrix} X^*&  Y^t\\Y^*&  X^t \end{pmatrix} \nonumber \\
& = \begin{pmatrix} I - X X^* - Y Y^* &  - X Y^t - Y X^t\\ - \overline Y X^* - \overline X Y^* & I - \overline Y Y^t - \overline X X^t \end{pmatrix}  = 
\begin{pmatrix}  - 2 Y Y^* &  - 2 X Y^t \\ - 2 \overline X Y^* &  - 2 \overline Y Y^t \end{pmatrix} \nonumber \\
& =-2{\bf C}_{\vp} \begin{pmatrix} 0&  Y^t\\Y^* &  0 \end{pmatrix}
    \end{align}
    using the relations \eqref{XX* and I} and \eqref{eq:symplectic 2}. 

 We see immediately that we have the following equivalences: 
    $$\OO_\vp = 0 \Leftrightarrow {\bf C}_\varphi  \text{ is unitary}  \Leftrightarrow Y = 0 \Leftrightarrow \varphi \text{ preserves } H_+ \text { and } H_-  \Leftrightarrow  \varphi \in \mob(\m S^1). $$
    By Theorem~\ref{thm:comp_compact_HS}, we know that $Y$ is $p$-Schatten if and only if $\vp \in \WP_p(\mathbb{S}^1)$. Since ${\bf C}_{\vp}$ is bounded, this is also equivalent to $\OO_\vp$ being $p$-Schatten by \eqref{eq:expression of hat K}. Similarly, $\OO_\vp$ is compact if and only if $Y$ is compact, which, again by  Theorem~\ref{thm:comp_compact_HS}, holds if and only if $\vp \in \Sym(\mathbb{S}^1)$. 
       \end{proof}

Another immediate corollary of \eqref{eq:OO} is the following composition rule for $\OO_\vp$.
\begin{corollary}\label{cor:OO_two_vp}
    If $\vp_1, \vp_2 \in \QS$, we have $\OO_{\varphi_1 \circ \varphi_2} = {\bf C}_{\vp_2}{\OO}_{\vp_1}{\bf C}_{\vp_2}^* + {\OO}_{\vp_2}$. In particular, if $\mu \in \mob(\m S^1)$, $\OO_{\varphi\circ \mu}=  {\bf C}_\mu \OO_\varphi  {\bf C}_\mu^*$ is a conjugation of $\OO_\vp$ by the unitary operator ${\bf C}_\mu$. Moreover, $\OO_{\vp_1} = \OO_{\vp_2}$ if and only if there exists $\mu \in \mob(\m S^1)$ such that $\varphi_1 = \mu \circ \varphi_2$.
\end{corollary}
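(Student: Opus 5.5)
Everything will be read off from Theorem~\ref{thm:2OO}, $2\OO_\vp = {\bf I} - {\bf C}_\vp {\bf C}_\vp^*$, together with the functoriality of composition operators. First I check that ${\bf C}_{\vp_1\circ\vp_2} = {\bf C}_{\vp_2}{\bf C}_{\vp_1}$ on $\dot H^{1/2}$. Indeed, $u\mapsto u\circ\vp_1\circ\vp_2$ is obtained by composing with $\vp_1$ and then with $\vp_2$. The normalization by subtracting the mean is harmless, because ${\bf C}_{\vp_2}$ annihilates constants: ${\bf R}_0(c\circ\vp_2)=0$. Both operators are bounded on $\dot H^{1/2}$ by Lemma~\ref{lem:quasiinvariance}, so the identity holds as bounded operators. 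Then
\[
2\OO_{\vp_1\circ\vp_2} = {\bf I} - {\bf C}_{\vp_2}{\bf C}_{\vp_1}{\bf C}_{\vp_1}^*{\bf C}_{\vp_2}^*
= {\bf I} - {\bf C}_{\vp_2}{\bf C}_{\vp_2}^* + {\bf C}_{\vp_2}({\bf I}-{\bf C}_{\vp_1}{\bf C}_{\vp_1}^*){\bf C}_{\vp_2}^*,
\]
which is $2\OO_{\vp_2} + 2{\bf C}_{\vp_2}\OO_{\vp_1}{\bf C}_{\vp_2}^*$. This is the composition rule. Taking $\vp_2=\mu\in\mob(\m S^1)$, Lemma~\ref{lem:mob} gives $\OO_\mu=0$ and ${\bf C}_\mu$ unitary, so $\OO_{\vp\circ\mu}$ is a unitary conjugate of $\OO_\vp$.

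For the injectivity statement, the ``if'' direction is Lemma~\ref{lem:mob}. For ``only if'', suppose $\OO_{\vp_1}=\OO_{\vp_2}$. By Theorem~\ref{thm:2OO} this means ${\bf C}_{\vp_1}{\bf C}_{\vp_1}^* = {\bf C}_{\vp_2}{\bf C}_{\vp_2}^*$. Set $\mu \coloneqq \vp_1\circ\vp_2^{-1}\in\QS$, so $\vp_1=\mu\circ\vp_2$ and ${\bf C}_{\vp_1} = {\bf C}_{\vp_2}{\bf C}_\mu$. Then
\[
{\bf C}_{\vp_2}{\bf C}_\mu{\bf C}_\mu^*{\bf C}_{\vp_2}^* = {\bf C}_{\vp_2}{\bf C}_{\vp_2}^*.
\]
The operator ${\bf C}_{\vp_2}$ is invertible with inverse ${\bf C}_{\vp_2^{-1}}$, and hence so is ${\bf C}_{\vp_2}^*$. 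Cancelling both gives ${\bf C}_\mu{\bf C}_\mu^*={\bf I}$, that is $\OO_\mu=0$. By the first item of Theorem~\ref{thm:main-Q-with-Schatten} (equivalently, $Y_\mu=0$ via \eqref{eq:expression of hat K}), $\mu\in\mob(\m S^1)$.

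The only point needing care is the operator identity ${\bf C}_{\vp_1\circ\vp_2}={\bf C}_{\vp_2}{\bf C}_{\vp_1}$ in the matrix sense, since Theorem~\ref{thm:2OO} is an identity between matrices. I would verify it on trigonometric polynomials, where both sides are literally $u\circ\vp_1\circ\vp_2$ minus its mean, and then extend by density and boundedness. I would also double-check the step $Y_\mu=0\Rightarrow\mu\in\mob$. When $Y_\mu=0$, ${\bf C}_\mu$ preserves $H_+$, so $\mu(z)=z\cdot\mu(z)/z$ has boundary values $\mu = {\bf C}_\mu[z]+c$ with ${\bf C}_\mu[z]\in H_+$; thus $\mu$ extends holomorphically to $\m D$. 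Being a circle homeomorphism, this extension is a proper degree-one map of $\m D$, hence a M\"obius automorphism.
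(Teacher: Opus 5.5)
Your proof is correct and follows essentially the same route as the paper. Both derive the composition rule from $2\OO_\vp = {\bf I} - {\bf C}_\vp{\bf C}_\vp^*$ together with ${\bf C}_{\vp_1\circ\vp_2} = {\bf C}_{\vp_2}{\bf C}_{\vp_1}$. Both obtain injectivity by cancelling the invertible ${\bf C}_{\vp_2}$ to get $\OO_{\vp_1\circ\vp_2^{-1}}=0$ and then invoking the first item of Theorem~\ref{thm:main-Q-with-Schatten}. The only differences are presentational: the paper does the cancellation through the composition rule rather than directly, and it leaves implicit the functoriality of ${\bf C}$ and the step $Y_\mu = 0 \Rightarrow \mu\in\mob(\m S^1)$, both of which you verify correctly.
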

\begin{proof}
    By \eqref{eq:OO}, we have
    \begin{align*}
        2 \OO_{\varphi_1 \circ \varphi_2} & = {\bf I} - {\bf C}_{\vp_2} {\bf C}_{\vp_1} {\bf C}_{\vp_1}^* {\bf C}_{\vp_2}^* = {\bf I} -  {\bf C}_{\vp_2} {\bf C}_{\vp_2}^* +  {\bf C}_{\vp_2} ({\bf I} -  {\bf C}_{\vp_1} {\bf C}_{\vp_1}^* ){\bf C}_{\vp_2}^* 
        \\ & = 2 {\OO}_{\vp_2} + 2 {\bf C}_{\vp_2}{\OO}_{\vp_1}{\bf C}_{\vp_2}^*,
    \end{align*}
    which proves the identity.

    We have seen that if $\mu \in \mob (\m S^1)$, then ${\bf C_\mu}$ is unitary and $\OO_\mu = 0$ by Lemma~\ref{lem:mob}. This gives $\OO_{\varphi\circ \mu}=  {\bf C}_\mu \OO_\varphi  {\bf C}_\mu^*$.

    Now we assume $\OO_{\vp_1} = \OO_{\vp_2}$, then 
    $$\OO_{\vp_1} = \OO_{(\vp_1 \circ \vp_2^{-1}) \circ \vp_2} = \OO_{\vp_2} + {\bf C}_{\vp_2} \OO_{\vp_1 \circ \vp_2^{-1}}  {\bf C}_{\vp_2}^*$$
    implies that $\OO_{\vp_1 \circ \vp_2^{-1}}  = 0$ since ${\bf C}_{\vp_2}$ is invertible. Theorem~\ref{thm:main-Q-with-Schatten} then implies $\vp_1 \circ \vp_2^{-1} \in \mob (\m S^1)$ and completes the proof.
\end{proof}

          We are now ready to prove Theorem~\ref{thm:main-det2}. 
\begin{proof}[Proof of Theorem~\ref{thm:main-det2}]
By \eqref{eq:OO} and Lemma~\ref{lem:singular value decomposition}, there exists unitary matrix $U$, acting on $\ell^2_{\m Z_+}$, and a diagonal matrix $D$ with $0 \le D \le \delta(\varphi)<1$, such that 
\begin{align*}
    2 \OO_\vp = {\bf I} - {\bf U} {\bf D} {\bf U}^*
\end{align*}
where 
$${\bf U} = \frac{1}{\sqrt 2}  \begin{pmatrix} U &  U\\ \overline U & -  \overline{U}  \end{pmatrix} \quad \text{ is unitary on } \ell^2_{\m Z^*} \text { and } \quad {\bf D} =  \begin{pmatrix} \frac{I + D}{I - D} &  0\\ 0 & \frac{I - D}{I + D} \end{pmatrix}.$$
In particular, this shows that
\begin{align}\label{unitary}
   {\bf I} -  \OO_\vp =\frac12 {\bf U} ({\bf I + D} ){\bf U}^*
   %= {\bf U} \begin{pmatrix} I -\frac{-D}{I - D}  &  0\\ 0 & I - \frac{D}{I + D} \end{pmatrix} {\bf U}^* 
   = {\bf U} \begin{pmatrix} \frac{I}{I - D}  &  0\\ 0 & \frac{I}{I + D}  \end{pmatrix} {\bf U}^* = {\bf U} \begin{pmatrix} I - D  &  0\\ 0 & I + D  \end{pmatrix}^{-1}{\bf U}^*.
\end{align}
The fact that the entries $(\delta_k)_k$ of $D$ equal the reciprocal positive Fredholm eigenvalues of the welded curve $\gamma$ was explained in the remark following Lemma~\ref{lem:singular_value}. On the other hand, by \eqref{unitary} and \eqref{eq:norm_bound_OO} we have
\[
\frac{1}{1+\delta_k} \ge \frac{1}{2}\left(1 + \frac{1}{K}\right),
\]
which implies $\delta_k \le (K-1)/(K+1)$.

It remains to prove the identity involving the determinants when $\vp \in \WP(\mathbb{S}^1)$.

For this, we know from Theorem~\ref{thm:main-Q-with-Schatten} that $\OO_\vp$ is Hilbert--Schmidt and by Proposition~\ref{prop:qs_kappa_bound}, ${\bf I} - \OO_\vp$ is invertible. We have that $({\bf I} -\OO_\vp)^{-1} = {\bf I} + \OO_\vp({\bf I} - \OO_\vp)^{-1}$. Hence the regularized Fredholm determinant $\det\nolimits_2({\bf I} -\OO_\vp)^{-1}$ is well-defined. Let $(\eta_k)_{k \ge 1}$ be the eigenvalues of $\OO_\vp({\bf I} - \OO_\vp)^{-1}$. Then, by definition and using \eqref{unitary}, 
\[
\det\nolimits_2 \left({\bf I} -\OO_\vp \right)^{-1} = \prod_{k=1}^\infty(1+\eta_k)\ee^{-\eta_k} = \prod_{k=1}^\infty(1-\delta_k^2) = \det (I-D^2),
\]
where we also used that the exponential factors cancel for the second identity, since $(\eta_k)_{k \ge 1} = (\pm \delta_k)_{k \ge 1}$. By Corollary~\ref{cor:energy_D},
$- \log \det(I-D^2) = I^L(\gamma)/12$, which concludes the proof.
\end{proof}

We finally come to the proof of Proposition~\ref{prop:main-Q2}, which expresses $I^L$ using the top left block of $\OO_\vp$.
   \begin{proof}[Proof of Proposition~\ref{prop:main-Q2}]
            By \eqref{eq:expression of hat K}, we have $M=-YY^*$.
            Since $Y$ is Hilbert--Schmidt if and only if $\vp \in \WP(\mathbb{S}^1)$ (Theorem~\ref{thm:comp_compact_HS}), it follows that $M$ is of trace class if and only if $\vp \in \WP(\mathbb{S}^1)$. 
             We obtain from Proposition~\ref{prop:IL_X_Y} that 
\[
12\log \det (I-M) = 12\log \det (I + YY^*) = I^L(\gamma),
\] 
which completes the proof.\end{proof}

\section{Dirichlet energy formula}\label{sect:dirichlet}

\subsection{Proof of Theorem~\ref{thm:DE}}
Let $\vp \in \QS(\mathbb{S}^1)$. One approach to the welding problem (see \cite{david_chord_arc}) derives equations for $(f,g)$ in terms of $\vp$, starting from the desired equation ${\bf U}_\vp g = f$. Here we will use similar ideas in order to express the $\dot{H}^{1/2}$-norms of $\log|f'|$ and $\log|g'|$ in terms of quantities directly involving $\log \vp'$. We will write ${\bf R_0} u := u-\widehat{u}_0$ for the operator that removes the zero-mode. With this notation, ${\bf C}_\vp = {\bf R}_0 {\bf U}_\vp$. 
%We begin with a lemma.  
\begin{lemma}\label{lemma:dirichlet-translation}
Let $\vp \in \WP(\mathbb{S}^1)$ and write $\psi = \vp^{-1}$. Let $(f,g)$ be the solution to the welding problem normalized so that $f(0)=0, f'(0)=1$ and $g(\infty) = \infty$.
 Let $B_2, B_3$ be the classical Grunsky matrices associated with $(f,g)$.
 Then,
    \[
    (\log f')_+ = -(X^*)^{-1}(\log \psi')_+, \qquad (\log g')_- = - (\overline{X})^{-1} (\log \vp')_-
    \]
    In particular,
\begin{align}\label{eq: log f' log g'}
    \|\log |f'|\|_{\dot H^{1/2}}^2 = \frac{1}{2}\| (X^*)^{-1} (\log \psi')_+\|_{H_+}^2, \quad \|\log |g'|\|^2_{\dot H^{1/2}}  = \frac{1}{2}\| (\overline{X})^{-1} (\log \vp')_-\|_{H_-}^2.
    \end{align}
\end{lemma}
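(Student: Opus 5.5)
The plan is to take the logarithm of the welding identity $f = g\circ\vp$ on $\mathbb{S}^1$, differentiate it in the angular variable, and then project onto positive or negative modes. Since $f(\ee^{\ii t}) = g(\vp(\ee^{\ii t}))$, differentiation in $t$ gives
\[
\ee^{\ii t} f'(\ee^{\ii t}) = \vp(\ee^{\ii t})\, g'(\vp(\ee^{\ii t}))\, \vp'(\ee^{\ii t})\,\ee^{\ii t},
\]
where we use $\partial_t \vp(\ee^{\ii t}) = \ii\,\ee^{\ii t}\vp'(\ee^{\ii t})$. Taking logarithms, this yields, modulo constants (and for a suitable continuous branch),
\[
\log f' + \log\frac{z}{f(z)}\Big|_{\text{mod }\m C} \;\;\text{type identities}:\qquad \log \frac{z f'(z)}{f(z)} = {\bf U}_\vp\Big[\log\frac{w g'(w)}{g(w)}\Big] + \log \frac{z\vp'(z)}{\vp(z)} + \big(\text{terms from } \log f/z,\ \log g/w\big).
\]

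It is cleaner to work with the pair $\log f'$ and $\log g'$ directly, writing
\[
\log f' = {\bf U}_\vp[\log g'] + \log \vp',
\]
interpreted on $\mathbb{S}^1$ through $f'(z) = g'(\vp(z))\vp'(z)$. Here $\log f'$ extends holomorphically to $\mathbb{D}$, so it has only nonnegative modes, and $\log g'$ extends holomorphically to $\mathbb{D}^*$ (with $g'(\infty)\neq 0$), so it has only nonpositive modes. For $\vp\in\WP$, all three functions lie in $H^{1/2}$. The first step is therefore to justify that $\log f',\log g' \in H^{1/2}$; this follows from the finiteness of the Loewner energy via \eqref{def:loewner energy dirichlet}, and $\log\vp'\in H^{1/2}$ holds by definition.

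For the second formula, I apply ${\bf R}_0$ and write $a=(\log g')_-\in H_-$ and $b=(\log f')_+\in H_+$. The identity becomes
\[
b = {\bf C}_\vp a + {\bf R}_0\log\vp'.
\]
Taking the $H_-$ component and using the block form \eqref{eq:composition-matrix1}, which gives ${\bf C}_\vp a = Ya + \overline X a$ on its $H_+$ and $H_-$ parts, I obtain
\[
0 = \overline X a + (\log\vp')_-,
\]
and hence $a = -(\overline X)^{-1}(\log\vp')_-$. The invertibility of $\overline X$ comes from Lemma~\ref{lem:composition-grunsky-closed}. For the first formula, I use instead $\log g' = {\bf U}_\psi[\log f'] + \log\psi'$, which follows from $g = f\circ\psi$. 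Taking the $H_+$ component, and using ${\bf C}_\psi={\bf C}_\vp^{-1}$, whose $H_-\to H_+$ block is $-Y^t$ and whose $H_+\to H_+$ block is $X^*$, I get
\[
0 = X^* b + (\log\psi')_+,
\]
and therefore $b = -(X^*)^{-1}(\log\psi')_+$.

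For \eqref{eq: log f' log g'}, I note that $\log|f'| = \Re\log f'$, and that $\log f'$ has only nonnegative modes. Writing $\log f' = c + b$, the real part $\frac12(b+\bar b)$ modulo constants has $\dot H^{1/2}$ norm squared $\frac14(\|b\|^2+\|\bar b\|^2) = \frac12\|b\|^2$. The same computation applies to $g$ with $a$.

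The main obstacle I expect is in the first step. One must justify taking a continuous branch of $\log\vp'$ and make sense of the compositions ${\bf U}_\vp[\log g']$ when these functions are only in $H^{1/2}$. Here boundedness of ${\bf U}_\vp$ on $H^{1/2}$ (Lemma~\ref{lem:quasiinvariance}) handles the composition, and the identity $f'=(g'\circ\vp)\vp'$ holds a.e.\ on $\mathbb{S}^1$ for $\WP$ weldings by absolute continuity of $\vp$ and boundary values of $f',g'$. If this pointwise justification is delicate, I would prove it first for smooth $\vp$ (where $f,g$ extend smoothly) and pass to the limit. An alternative route is the argument-principle observation that $\log\vp'$ has no winding because $\vp$ has degree one.
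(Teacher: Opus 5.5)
Your proposal is correct and follows the paper's proof essentially step for step. You differentiate $f = g\circ\vp$ to get $\log f' = {\bf U}_\vp\log g' + \log\vp'$ and apply ${\bf R}_0$. You then read off the $H_-$ component through the block $\overline X$ (invertible by Lemma~\ref{lem:composition-grunsky-closed}), argue symmetrically with ${\bf C}_\psi = {\bf C}_\vp^{-1}$ and its $H_+$ block $X^*$ for $f$, and use $\|\Re b\|_{\dot H^{1/2}}^2 = \frac12\|b\|_{\dot H^{1/2}}^2$ for the norm identities. The only blemish is your first, discarded display, which carries a spurious factor $\vp(\ee^{\ii t})$; the chain rule gives $f' = (g'\circ\vp)\,\vp'$, so simply delete that display.
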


\begin{proof}
We have ${\bf U}_\vp g =f$, so by the chain rule
\[
{\bf U}_\vp \log g' + \log \vp' = \log f'.
\]
Hence,
\begin{align*}
{\bf C}_\vp \log g' + {\bf R_0}\log \vp' & = {\bf R_0}\log f'.
\end{align*}
Since $\log f'$ is analytic in $\mathbb{D}$, it follows that
\begin{align*}
{\bf P_+}({\bf C}_\vp \log g') + {\bf P_+}\log \vp' & = {\bf P_+}\log f' \\
{\bf P_-}({\bf C}_\vp \log g') + {\bf P_-}\log \vp' & = 0.
\end{align*}
 We can write the second equation as
 \[
 \overline{X} {\bf P_-}\log  g'  = -  {\bf P_-}\log \vp' \Leftrightarrow {\bf P_-}\log  g' = -\overline{X}^{-1}{\bf P_-}\log \vp'.
 \]
  Note that ${\bf P_-} \log g' = {\bf R_0} \log g'$. Therefore, 
\[
{\bf R_0} \log g' = (\log g')_- = - \overline X^{-1}(\log \vp')_- 
\]
We get the second identity of \eqref{eq: log f' log g'} by noting that $\| \log|g'| \|_{\dot H^{1/2}}^2 = \| \log g' \|_{\dot H^{1/2}}^2/2$. 

The identity for $\log|f'|$ is proved similarly starting from ${\bf U}_\psi f = g$, and we obtain 
$$(\log f')_+ = - (X^*)^{-1} (\log \psi')_+,$$
as claimed.
\end{proof}

\begin{rem}
One can also start the proof of Lemma~\ref{lemma:dirichlet-translation} from the formula ${\bf U}_\vp g =f$, and differentiate, but then split into real and imaginary parts:
\[
{\bf C}_\vp \log |g'| +  {\bf R}_0\log |\vp'| = {\bf R}_0\log| f'|
\]
and
\[
{\bf C}_\vp \, \im \log  g' +  {\bf R}_0 \, \im \log \vp' = {\bf R}_0 \, \im \log  f'.
\]
Then use the relation between the conjugate function and the Hilbert transform ${\bf J}$. This leads to an interesting equation involving the anticommutator $\{{\bf J}, {\bf C}_\vp\}$,
     \begin{align}\label{eq:GD-main-eq}
 ({\bf J}{\bf C}_\vp + {\bf C}_\vp {\bf J}){\bf R}_0 \log |g'| = {\bf R}_0 \, \im \log \vp' - {\bf J}{\bf R}_0\log |\vp'|.
\end{align}
This is an analog of an equation obtained by G. David in \cite{david_chord_arc}. In the present setting, we can use the block matrix representations for ${\bf C}_\vp$ and ${\bf J}$ to see that
 \[
  {\bf J}{\bf C}_\vp + {\bf C}_\vp {\bf J}= \ii \left( \begin{pmatrix}
-X & Y\\
-\overline{Y} & \overline{X}
\end{pmatrix} +  \begin{pmatrix}
-X & -Y\\
\overline{Y} & \overline{X}
\end{pmatrix}\right) = 2\ii \begin{pmatrix}
-X & 0\\
0 & \overline{X}
\end{pmatrix}.
 \]
 By Lemma~\ref{lem:composition-grunsky-closed}, $X$ is invertible, and $X^{-1}=\overline{B}_3$ so 
 \[
 ({\bf J}{\bf C}_\vp + {\bf C}_\vp {\bf J})^{-1}= \frac{1}{2\ii} \begin{pmatrix}
-X^{-1} & 0\\
0 & \overline{X}^{-1}
\end{pmatrix}
 \]
 and equation~\eqref{eq:GD-main-eq} can thus be solved for ${\bf R}_0 \log|g'|$ in terms of quantities involving only the real and imaginary parts of $\log \vp'$. After some manipulations, this leads to the identity for $\|\log|g'|\|^2_{\dot H^{1/2}}$ in \eqref{eq: log f' log g'}. The other term in \eqref{eq: log f' log g'} is obtained in a similar way.
\end{rem}

The next lemma uses an idea from \cite{SchippersStaubach2015} to show that, just like the universal Liouville action/Loewner energy, the Velling--Kirillov potential $\log \left| f'(0)/g'(\infty) \right|$, see \cite{TT06}, can also be expressed as a determinant (albeit of a minor with one element) involving a Grunsky matrix. 

\begin{lemma}\label{lemma:VK}
   Consider the set-up of Lemma~\ref{lemma:dirichlet-translation}. Then, \[f'(0)/g'(\infty) = (B_2)_{1,1} =(B_3)_{1,1}.\] In particular, 
    $
\log \left| f'(0)/g'(\infty)\right| 
\le 0.$
\end{lemma}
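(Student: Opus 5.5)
The plan is to read off $(B_2)_{1,1}$ and $(B_3)_{1,1}$ directly from the Grunsky expansions in Section~\ref{sect:classical_grunsky}. By definition $(B_2)_{1,1}=a_{-1,1}$ and $(B_3)_{1,1}=a_{1,-1}$, and the symmetry convention $a_{-\ell,k}=a_{k,-\ell}$ gives $a_{-1,1}=a_{1,-1}$. So both entries are the same coefficient, and it suffices to compute $a_{1,-1}$. It is the coefficient of $-z^{-1}w$ in the mixed expansion
\[
\log \frac{g(z)-f(w)}{z}= a_{0,0}-\sum_{k\ge 1}\sum_{\ell\ge 0} a_{k,-\ell} z^{-k}w^{\ell}.
\]

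To isolate this coefficient, I will expand the left-hand side to first order in $w$ and in $1/z$. Write $g(z)=g'(\infty)z+b_0+b_1z^{-1}+\cdots$ and $f(w)=w+O(w^2)$, using the normalization $f'(0)=1$. Then
\[
\log\frac{g(z)-f(w)}{z}=\log g'(\infty)+\log\Bigl(1+\frac{b_0}{g'(\infty)z}+\cdots-\frac{w+O(w^2)}{g'(\infty)z}\Bigr).
\]
The only contribution to the monomial $z^{-1}w$ comes from the linear term of the logarithm, which is $-\frac{w}{g'(\infty)z}$. Cross terms are at least quadratic in $1/z$. Hence $a_{1,-1}=1/g'(\infty)$.

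Without the normalization $f'(0)=1$, the same computation gives $a_{1,-1}=f'(0)/g'(\infty)$. This proves the identity in either normalization. Since the expansion converges for $|z|>1>|w|$, extracting coefficients is legitimate. The one point to check is that these classical coefficients match the matrices $B_2,B_3$ with the conventions (including the factor $\sqrt{k\ell}=1$ here) that enter Theorem~\ref{thm:relation between Grunsky and composition}. That is only bookkeeping of indices.

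For the inequality, I will invoke Lemma~\ref{lem:Grunsky_eq}, which gives $B_1B_1^*+B_2B_2^*=I$, so $B_2B_2^*\le I$. Taking the $(1,1)$ entry,
\[
|(B_2)_{1,1}|^2\le \sum_\ell |(B_2)_{1,\ell}|^2=(B_2B_2^*)_{1,1}\le 1.
\]
Therefore $|f'(0)/g'(\infty)|\le 1$, and taking logarithms yields $\log|f'(0)/g'(\infty)|\le 0$.

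Alternatively, $(B_3)_{1,1}=\overline{(X^{-1})_{1,1}}$ by Theorem~\ref{thm:relation between Grunsky and composition} together with the remark that $X^{-1}=\overline B_3$. Combined with $\|X^{-1}\|\le 1$, which follows from $XX^*\ge I$ in \eqref{XX* and I}, this gives the same bound. I expect no real obstacle; the only delicate part is the index and sign conventions for the mixed coefficient.
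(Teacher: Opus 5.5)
Your proof is correct. It is not the paper's main argument, but it is exactly the alternative the authors sketch in the remark immediately after Lemma~\ref{lemma:VK}.

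The paper's proof works through the composition operator. From ${\bf U}_\psi f = g$ and ${\bf P}_+ f = f$ it obtains ${\bf C}_\psi f = {\bf R}_0 g = \beta z + g_-$. In the canonical basis this reads $X_\psi f_+ = (\beta,0,0,\ldots)$ with $f_+=(1,f_2,\ldots)$, so invertibility of $X_\psi$ (Lemma~\ref{lem:composition-grunsky-closed}) gives $1=\beta\,(X_\psi^{-1})_{1,1}$. The paper then uses $X_\psi = X_\vp^*$ and $(X_\vp^*)^{-1}=B_2$ from Theorem~\ref{thm:relation between Grunsky and composition} to identify $1/\beta$ with $(B_2)_{1,1}$. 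Finally, $B_2=B_3^t$ handles the $B_3$ entry.

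Your direct extraction of the coefficient of $z^{-1}w$ in $\log\frac{g(z)-f(w)}{z}$ is more elementary and self-contained. It uses only the definition of the $a_{k,\ell}$ and the symmetry convention $a_{-1,1}=a_{1,-1}$. It needs neither the Takhtajan--Teo relation nor the invertibility of $X$.

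What the paper's route buys is the identity $f'(0)/g'(\infty)=\overline{(X^{-1})_{1,1}}$ with a quantity defined intrinsically from $\vp$. That is precisely the form of the term $4\log|(X^{-1})_{1,1}|$ in Theorem~\ref{thm:DE}. To reach it from your computation you must still invoke Theorem~\ref{thm:relation between Grunsky and composition}, as your last paragraph does.

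The inequality step is the same in both arguments: $B_2$ is a contraction by Lemma~\ref{lem:Grunsky_eq}.
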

\begin{proof}
 We can write $f(z) = %\alpha 
 z + O(z^2)$ and $g(z) = \beta z + g_0 + g_-(z)$. Here $g_-$ is holomorphic in $\mathbb{D}^*$ and $g_-(z) \to 0$ as $z \to \infty$. Furthermore, we have ${\bf R}_0 g = \beta z + g_-$ and ${\bf P}_+ f = f$. Note also that $\log|g'(\infty)| = \log|({\bf R}_0 g)'(\infty)|= \log \beta$. 
 
 Recall that $\vp = g^{-1} \circ f \mid_{\mathbb{S}^1}$ and $\psi = \vp^{-1} = f^{-1}\circ g\mid_{\mathbb{S}^1}$. By definition, ${\bf U}_\psi f = g$. Since ${\bf C}_\psi = {\bf R}_0{\bf U}_\psi$ and ${\bf P}_+ f = f$, we therefore see that 
 \begin{align}\label{eq:f-g}
 {\bf C}_\psi {\bf P}_+f = {\bf C}_\psi f = {\bf R}_0{\bf U}_\psi f = {\bf R}_0 g = \beta z + g_-.
 \end{align}  
 Write $X_\psi, Y_\psi$ for the components of ${\bf C}_\psi$ as in \eqref{eq:composition-matrix1}. We represent $f$ and ${\bf R}_0 g$ as vectors,
 \[
 {\bf f} = (f_+, 0), \quad {\bf R}_0{\bf g} = (g_+, g_-),
 \]
 respectively, where,
 \[
 f_+=(1, f_2, \ldots), \qquad g_+=(\beta, 0, \ldots).
 \]
 With this notation, we deduce from \eqref{eq:f-g} that
\[
X_\psi f_+   = g_+  = \b z \Longrightarrow  1 =  (X_\psi^{-1})_{1,1} \beta.
\]
Here we used the fact that $X_\psi$ is invertible, which follows from Lemma~\ref{lem:composition-grunsky-closed}.
On the other hand, we know from \eqref{eq:composition-matrix1} that $X_\psi = X_{\vp}^*$. So using Theorem~\ref{thm:relation between Grunsky and composition}, we see $X_\psi^{-1} = (X_{\vp}^*)^{-1} = B_2$. It follows that $$f'(0)/g'(\infty) = 1/\b = (B_2)_{1,1} =(B_3)_{1,1},$$
where the last identity uses $B_2=B_3^t$. The fact that $\log| 1/\beta| \le 0$ finally follows from the last equation and that $B_2$ is a contraction. 
\end{proof}
\begin{rem}
    Recall the definition of classical Grunsky coefficients $a_{k,\ell}$ in Section~\ref{sect:classical_grunsky}, $f'(0)/g'(\infty)$ can be expressed as $\exp(-a_{0,0})$. However, $a_{0,0}$ is not one of the entries of the classical Grunsky matrix, and the relation to the composition operator is not clear from this. 
    Another way to obtain Lemma~\ref{lemma:VK} is to notice that
    \begin{align*}
        a_{1,-1} & = -\lim_{z \to \infty} z \lim_{w \to 0} \frac{1}{w}\left(\log \frac{g(z) - f(w)}{z} - \log \frac{g(z) - f(0)}{z}\right)  \\
        & = -\lim_{z \to \infty} \frac{- z f'(0)}{g(z) - f(0)} =  \lim_{z \to \infty} \frac{ z}{g(z)} = \frac{1}{g'(\infty)}.  
    \end{align*}
 However, we find the longer proof interesting, as it directly uses the composition operator. 
\end{rem}

\begin{proof}[Proof of Theorem~\ref{thm:DE}]
The formula follows immediately from the proof of Lemma~\ref{lemma:dirichlet-translation} and Lemma~\ref{lemma:VK}. 
\end{proof}
\subsection{Proof of Corollary~\ref{cor:growth}}
We are now ready to prove the corollary to Theorem~\ref{thm:DE}.
\begin{proof}[Proof of Corollary~\ref{cor:growth}]
    By Theorem~\ref{thm:DE}, and using that the Velling--Kirillov potential is non-positive (Lemma~\ref{lemma:VK}), we have
    \begin{align} \label{eq:energy_comp}
    I^L(\vp^{\circ n}) \le \|B_2[\psi^{\circ n}] (\log (\psi^{\circ n})')_+\|_{H_+}^2 + \|B_3[\vp^{\circ n}] (\log (\vp^{\circ n})')_-\|_{H_-}^2,
        \end{align}
    where we write $B_2[\psi^{\circ n}]$ for the classical Grunsky matrix (acting on $H_+$) corresponding to the normalized pair $(f,g)$ obtained from $\psi^{\circ n}$ by conformal welding and similarly for $B_3$.
    We have
    \[
    \log (\vp^{\circ n})'(z) = \sum_{j=0}^{n-1}\log \vp'(\vp^{\circ j}(z)).
    \]
    Hence, using Lemma~\ref{lem:quasiinvariance},
    \[
    \|\log (\vp^{\circ n})'\|_{\dot H^{1/2}} \le \sum_{j=0}^{n-1}\|({\bf C}_\vp)^j  \log \vp'\| \le \|\log \vp'\|_{\dot H^{1/2}}  \sum_{j=0}^{n-1}K^{j/2} = \| \log \vp'\|_{\dot H^{1/2}}  \frac{K^{n/2}-1}{K^{1/2}-1}.
    \]
    By Lemma~\ref{lem:Grunsky_eq}, viewing $B_3$ as an operator acting on $H^{1/2}$, and using that ${\bf P_-}$ is a projection, we have the estimate
    \[\|B_3[\vp^{\circ n}] (\log (\vp^{\circ n}))'_-\|_{H_-} \le \|\log (\vp^{\circ n})'\|_{\dot H^{1/2}}. \]
    Hence,
    \[
    \|B_3[\vp^{\circ n}] (\log (\vp^{\circ n}))'_-\|_{H_-} \le \|\log (\vp^{\circ n})'\|_{\dot H^{1/2}} \le  \frac{K^{n/2}-1}{K^{1/2}-1} \| \log \vp'\|_{\dot H^{1/2}}.
    \]
    We argue similarly for $\log (\psi^{\circ n})'$ and obtain the analogous bound. By \eqref{eq:energy_comp} we get
    \[
    I^L(\vp^{\circ n}) \le \left(\frac{K^{n/2}-1}{K^{1/2}-1}\right)^2\left(\| \log \psi'\|_{\dot H^{1/2}}^2 +  \| \log \vp'\|_{\dot H^{1/2}}^2 \right),
    \]
    as claimed.
\end{proof}

\bibliographystyle{plain}%alpha
\bibliography{ref}

\end{document}